\documentclass[a4paper,11pt]{article}
\usepackage{amsmath, amsfonts, amscd, amssymb, amsthm, enumerate, xypic}

\def\bfB{\mathbf{B}}

\DeclareMathOperator{\id}{\operatorname{id}}

\DeclareMathOperator{\Mat}{\operatorname{M}}
\DeclareMathOperator{\Matt}{\operatorname{T}}
\DeclareMathOperator{\Hom}{\operatorname{Hom}}

\DeclareMathOperator{\Ker}{\operatorname{Ker}}

\DeclareMathOperator{\End}{\operatorname{End}}

\DeclareMathOperator{\GL}{\operatorname{GL}}
\DeclareMathOperator{\Vect}{\operatorname{span}}
\DeclareMathOperator{\im}{\operatorname{Im}}
\DeclareMathOperator{\tr}{\operatorname{tr}}

\DeclareMathOperator{\car}{\operatorname{char}}

\DeclareMathOperator{\rk}{\operatorname{rk}}

\renewcommand{\setminus}{\smallsetminus}
\renewcommand{\epsilon}{\varepsilon}

\def\F{\mathbb{F}}
\def\R{\mathbb{R}}

\def\N{\mathbb{N}}

\def\calA{\mathcal{A}}
\def\calB{\mathcal{B}}

\def\calM{\mathcal{M}}
\def\calN{\mathcal{N}}

\def\calP{\mathcal{P}}

\def\calT{\mathcal{T}}
\def\calU{\mathcal{U}}
\def\calV{\mathcal{V}}

\def\calX{\mathcal{X}}

\def\calZ{\mathcal{Z}}

\def\lcro{\mathopen{[\![}}
\def\rcro{\mathclose{]\!]}}

\theoremstyle{definition}
\newtheorem{Def}{Definition}[section]

\theoremstyle{plain}
\newtheorem{theo}{Theorem}[section]

\newtheorem{lemma}[theo]{Lemma}
\newtheorem{claim}{Claim}[section]

\theoremstyle{plain}

\theoremstyle{remark}

\title{Spaces of triangularizable matrices (III): Perfect non-quadratically closed fields with characteristic $2$}
\author{Cl\'ement de Seguins Pazzis\footnote{Universit\'e de Versailles Saint-Quentin-en-Yvelines, Laboratoire de Math\'ematiques
de Versailles, 45 avenue des Etats-Unis, 78035 Versailles cedex, France}
\footnote{e-mail address: clement.de-seguins-pazzis@ac-versailles.fr}}

\begin{document}

\thispagestyle{plain}

\maketitle
\begin{abstract}
Given a field $\F$ and an integer $n \geq 2$,
denote by $t_n(\F)$ the greatest possible dimension for a vector space of $n$-by-$n$ matrices over $\F$
in which every element is triangularizable. It was recently proved that $t_n(\F)=\frac{n(n+1)}{2}$ if and only if $\F$ is not quadratically closed,
with the possible exception of finite fields with characteristic $2$ and less than $n-1$ elements.

In this article, we prove that the equality $t_n(\F)=\frac{n(n+1)}{2}$ holds for all perfect non-quadratically closed fields with characteristic $2$ -- with the exception of fields with cardinality $2$ -- and for these fields we obtain a key result for a future analysis of the spaces that have the critical dimension $t_n(\F)$.
\end{abstract}

\vskip 2mm
\noindent
\emph{AMS MSC:} 15A30, 15A03, 15A18

\vskip 2mm
\noindent
\emph{Keywords:} triangularization, spectrum, spaces of matrices, dimension, fields with characteristic $2$


\section{Introduction}

Let $\F$ be a field. For nonnegative integers $n$ and $p$, denote by $\Mat_{n,p}(\F)$ the vector space of all $n$-by-$p$ matrices with entries in $\F$,
by $\Mat_n(\F)$ the algebra of all $n$-by-$n$ matrices with entries in $\F$, by $\Matt_n(\F)$
its subalgebra of all upper-triangular matrices, and by $\mathfrak{sl}_n(\F)$ its linear subspace of all trace zero elements.

A subset $\calX$ of $\Mat_n(\F)$ is called \textbf{weakly triangularizable} when every element of $\calX$ is triangularizable over $\F$,
i.e., conjugated to an element of $\Matt_n(\F)$, or equivalently when every element of $\calX$ is annihilated by a split polynomial with coefficients in $\F$.
We adopt a similar definition for subsets of $\End(V)$ when $V$ is a finite-dimensional vector space over $\F$.
Given an integer $n \in \N$, we denote by $t_n(\F)$ the greatest possible dimension for a weakly triangularizable linear subspace of
$\Mat_n(\F)$ (or, for a weakly triangularizable subspace of $\End(V)$, where $V$ is an $n$-dimensional vector space over $\F$).
Obviously $t_n(\F) \geq \frac{n(n+1)}{2}$ because $\Matt_n(\F)$ is a weakly triangularizable subspace of $\Mat_n(\F)$.

The study of weakly triangularizable matrix spaces has been initiated recently \cite{dSPtriangularizable}, in part due to its connection with the classical Gerstenhaber theorem on spaces
of nilpotent matrices \cite{Gerstenhaber}. Remembering that a matrix over a field is nilpotent if and only if it is similar to a strictly upper-triangular matrix,
weakly triangularizable spaces can be viewed as a natural variation of Gerstenhaber's theorem.
Strikingly, interesting results on weakly triangularizable matrix spaces have emerged only very recently:
a potential reason might be that the value of $t_n(\F)$ profoundly depends on the arithmetic properties of the field $\F$ under consideration.
For example, if $\F$ is algebraically closed then $t_n(\F)=n^2$ for every integer $n \geq 1$, whereas if $\F$ is the field of real numbers it is easy to see that
$t_n(\F)=\frac{n(n+1)}{2}$ by noting that if a skew-symmetric real matrix is triangularizable over the reals then it is zero
(indeed, it is classical that every skew-symmetric real matrix is diagonalisable over the complex numbers, with all eigenvalues pure imaginary).

Let us now recall the main result from \cite{dSPtriangularizable}:

\begin{theo}\label{theo:classical}
Let $\F$ be a field, and $n \geq 2$ be an integer. Assume that either $\car(\F) \neq 2$ or $|\F| \geq n-1$.
Then $t_n(\F)=\frac{n(n+1)}{2}$ if and only if $\F$ is not quadratically closed.
\end{theo}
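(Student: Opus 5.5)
The plan is to prove the two implications separately. The first, that $\F$ quadratically closed forces $t_n(\F)>\frac{n(n+1)}{2}$, is elementary. The second, the upper bound $t_n(\F)\leq \frac{n(n+1)}{2}$ when $\F$ is not quadratically closed, will go by induction on $n$, and that is where the hypothesis on $\car(\F)$ or $|\F|$ enters.

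\emph{Quadratically closed case.} Assume $\F$ is quadratically closed and consider the space $\calV$ of all block upper-triangular matrices whose diagonal blocks have sizes $2,1,\dots,1$. This is $\Matt_n(\F)$ together with the entry $(2,1)$, so $\dim \calV=\frac{n(n+1)}{2}+1$. The characteristic polynomial of any $M\in \calV$ is the product of a degree-$2$ polynomial, which splits because $\F$ is quadratically closed, and linear factors. Hence every element of $\calV$ is triangularizable, and $t_n(\F)>\frac{n(n+1)}{2}$.

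\emph{Non-quadratically closed case, base case and setup.} Fix a monic irreducible polynomial $p=t^2-at-b$ over $\F$, and let $C$ be its companion matrix.
\begin{itemize}
\item For $n=2$: for every $\lambda \neq 0$, the characteristic polynomial of $\lambda C$ is $t^2-\lambda a t-\lambda^2 b$, which is irreducible. So $\F C$ is a line all of whose nonzero elements are non-triangularizable, and every weakly triangularizable subspace of $\Mat_2(\F)$ meets it trivially. This gives $t_2(\F)\leq 3$.
\item For the induction, let $\calV\subset \Mat_n(\F)$ be weakly triangularizable with $n\geq 3$. Write its elements as $\begin{bmatrix} \alpha & L \\ C' & K\end{bmatrix}$, with $\alpha\in\F$, $L\in\Mat_{1,n-1}(\F)$, $C'\in\Mat_{n-1,1}(\F)$ and $K\in\Mat_{n-1}(\F)$.
\item Let $\calW$ be the subspace of $\calV$ where $C'=0$. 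For $M\in\calW$, the characteristic polynomial of $K$ divides that of $M$, so the set of $K$-blocks of $\calW$ is weakly triangularizable. By induction it has dimension at most $\frac{(n-1)n}{2}$. The kernel of $M\mapsto K$ on $\calW$ has dimension at most $n$, so $\dim \calW\leq \frac{n(n+1)}{2}$.
\item It therefore suffices to show that $\dim\calV>\frac{n(n+1)}{2}$ cannot be absorbed by the $C'$-part. After a change of basis, we may assume $e_1$ is chosen so that $\calW$ is as large as possible.
\end{itemize}

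\emph{Main obstacle.} I expect the key step to be a lemma of the following form: if $\dim \calV > \frac{n(n+1)}{2}$, then one can find $M\in\calV$ and a vector $x$ such that, along a suitable pencil $M+\lambda N$ in $\calV$, the characteristic polynomial acquires the factor $p$ (suitably rescaled) for some $\lambda\in\F$. The approach I would try first:
\begin{itemize}
\item Choose $M,N\in\calV$ so that, in a well-chosen basis, $M+\lambda N$ contains a $2\times 2$ diagonal block equal to $\mu C$, sitting above blocks that are triangular.
\item The characteristic polynomial of $M+\lambda N$ is a polynomial in $\lambda$ of degree at most $n$. Showing it is not split for some $\lambda$ amounts to showing that a certain nonzero polynomial identity in $\lambda$ fails somewhere in $\F$.
\item This is exactly where we need $\F$ to have enough elements, namely more than about $n-2$ values to evaluate at, hence $|\F|\geq n-1$. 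Alternatively, in characteristic not $2$, we can complete the square and argue directly with a non-square $d$, as in the real skew-symmetric example, by working with matrices of the form $\begin{bmatrix}0&d\\1&0\end{bmatrix}$ and never evaluating polynomials.
\end{itemize}
With this lemma, the $C'$-components combine with the induction bound to give $\dim\calV\leq\frac{n(n+1)}{2}$, and combined with $\Matt_n(\F)$ this yields equality.
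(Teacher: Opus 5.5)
Your quadratically closed direction is correct: $\Mat_2(\F)\vee\Mat_1(\F)\vee\cdots\vee\Mat_1(\F)$ has dimension $\frac{n(n+1)}{2}+1$. Your base case $n=2$ is also correct. The gap is the upper bound for $n\geq 3$, which is the whole substance of the theorem.

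Your reduction bounds the kernel of $M\mapsto K$ on $\calW$ by $n$. That kernel consists of the elements of $\calV$ supported on the first row, i.e.\ $\calV\cap(V^\star\otimes e_1)$ with $V=\F^n$. Adding the $(n-1)$-dimensional $C'$-part, you only get $\dim\calV\leq\frac{n(n+1)}{2}+(n-1)$.

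Your proposed key lemma then says, in effect, that every subspace of dimension $>\frac{n(n+1)}{2}$ contains a non-triangularizable matrix. That is the theorem itself. You give no mechanism for producing $M$, $N$, or a basis in which $\mu C$ sits as a diagonal block above triangular blocks. The remarks about evaluating at about $n-2$ points and about completing the square are not arguments. The real skew-symmetric trick uses anisotropy over $\R$, which fails over, e.g., finite fields, where every quadratic form in at least $3$ variables is isotropic. Choosing $e_1$ to make $\calW$ as large as possible also points the wrong way: what you need is for the first-row part to be small.

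The missing idea is that your own count closes exactly when $e_1$ is weakly $\calV$-adapted, i.e.\ $\dim(\calV\cap(V^\star\otimes e_1))\leq 1$. Then $\dim\calV\leq (n-1)+1+\binom{n}{2}=\binom{n+1}{2}$, as in Section \ref{section:adaptedvectorsinduction}. Conversely, if $\dim\calV>\binom{n+1}{2}$, your count plus the induction hypothesis forces $\dim(\calV\cap(V^\star\otimes x))\geq 2$ for every $x\neq 0$. By \eqref{eq:dualtransitive} this means $\dim(\calV^\bot x)\leq n-2$ for all $x$, so the dual operator space $\widehat{\calV^\bot}$ has bounded rank.

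Ruling this out needs genuine input, and the routes available are:
\begin{itemize}
\item The paper does not reprove the statement. It quotes it from \cite{dSPtriangularizable}, where the characteristic-$2$ case rests on Atkinson and Lloyd's theorem on primitive spaces of bounded-rank matrices \cite{AtkinsonLloydPrim}. The cardinality hypothesis is what makes that theorem usable.
\item NRC fields are handled there by an adapted-vector induction modelled on \cite{dSPfeweigenvalues}.
\item For $|\F|>3$, the present paper gets the bound from Theorem \ref{theo:weaklyadapted}. It strengthens the induction to ``no $3$-complex of $V$ contains all weakly adapted vectors.'' This combines the Covering Lemma in $V^\star$, the subspaces $W_i$ with $\dim W_i>\lfloor 2n/3\rfloor$, the observation $\id_V\notin\bigoplus_i f_i\otimes W_i$, and the rough bound of Claim \ref{claim:roughbound}.
\end{itemize}
Without an ingredient of this kind, your induction does not close.
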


Here, we must stress that we use the definition of quadratic closeness from Galois Theory: a field $\F$ is quadratically closed
when it has no algebraic extension of degree $2$, i.e., all polynomials with degree $2$ and coefficients in $\F$ have a root in $\F$.
This is different from a popular (yet misguided, in our opinion, due to the characteristic $2$ issue) way of defining quadratically closed fields
as those for which $x \in \F \mapsto x^2 \in \F$ is surjective. For example, under our definition no finite field with characteristic $2$ is quadratically
closed, whereas all finite fields are perfect. We will call a field $\F$ \textbf{NRC} (for ``Non-Root-Closed") when the mapping $x \in \F \mapsto x^2 \in \F$ is nonsurjective.

It should be also noted that in \cite{dSPtriangularizable}, the stated provision for the above theorem was that $\car(\F) \neq 2$ or $\F$
is infinite. However, a close examination of the arguments for fields with characteristic $2$, which essentially rest upon a
theorem of Atkinson and Lloyd on primitive spaces of matrices with bounded rank \cite{AtkinsonLloydPrim}, shows that it holds under the milder assumption that $|\F| \geq n-1$.

We also mention that \cite{dSPtriangularizable} was not only concerned with the determination of $t_n(\F)$, but also with the structure of the spaces
that reach the maximal dimension $t_n(\F)$, still in the case of a field that is not quadratically closed. This determination was achieved
under the assumption that $\F$ is non-quadratically closed and $|\F| \geq n$ (again, it is formally stated there that the field should be infinite,
but a close examination of the proof reveals that the looser property $|\F| \geq n$ is sufficient).
In \cite{dSPtriangularizable2}, the classification was generalized to all finite fields with odd characteristic.

Hence, there remains two challenging problems in the study of weakly triangularizable spaces:
\begin{enumerate}[(1)]
\item Extend the equality $t_n(\F)=\frac{n(n+1)}{2}$ to all finite fields with characteristic $2$;
\item Classify, in the same setting, the weakly triangularizable subspaces that have dimension $t_n(\F)$, which we call the
\textbf{optimal} weakly triangularizable subspaces.
\end{enumerate}

The present work deals with problem (1), and we will solve it with the exception of fields with cardinality $2$
(for which our techniques entirely fail). We will also pave the way for a solution of problem (2) for the same fields.

Before moving forward, it is important to explain what has been proved in \cite{dSPtriangularizable} regarding fields with characteristic $2$,
in particular for optimal spaces. To this end, we need the notation for the \emph{joint} of two (or several) spaces of square matrices.
Let $n$ and $p$ be nonnegative integers and $\calA$ and $\calB$ be respective subsets of $\Mat_n(\F)$ and $\Mat_p(\F)$.
Their \textbf{joint}, denoted by $\calA \vee \calB$, is the set of all matrices of $\Mat_{n+p}(\F)$ of the form
$$\begin{bmatrix}
A & C \\
[0]_{p \times n} & B
\end{bmatrix}$$
where $A \in \calA$, $B \in \calB$ and $C \in \calM_{n,p}(\F)$.
The operator $\vee$ is clearly associative, and we naturally extend it as an $n$-ary operator on lists of subsets of square matrices.
We can now state one of the main results of \cite{dSPtriangularizable}:

\begin{theo}\label{theo:grandscardinaux}
Let $\F$ be a perfect field of characteristic $2$ that is not quadratically closed.
Let $n \geq 1$ be an integer such that $|\F| \geq n$, let $V$ be an $n$-dimensional vector space over $\F$,
and let $S$ be a weakly triangularizable subspace of $\End(V)$ with dimension $\dbinom{n+1}{2}$.
Then, in some basis of $V$, the space $S$ is a represented by a joint of matrix spaces among $\Mat_1(\F_2)$ and $\mathfrak{sl}_2(\F)$.
\end{theo}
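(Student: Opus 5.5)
We will argue by induction on $n$, following the strategy of \cite{dSPtriangularizable}. The base cases are $n=1$, which is trivial since $S=\End(V)\simeq \Mat_1(\F)$, and $n=2$. For $n=2$ the key observation is that, $\F$ being perfect of characteristic $2$, every element of $\F$ is a square. Consequently a $2$-by-$2$ matrix $M$ fails to be triangularizable exactly when its characteristic polynomial $t^2-\tr(M)t+\det M$ is irreducible, and this forces $\tr M\neq 0$: when $\tr M=0$ the polynomial $t^2+\det M$ has the root $\sqrt{\det M}$. So $\mathfrak{sl}_2(\F)$ is weakly triangularizable. Conversely, since $\F$ is not quadratically closed there is an irreducible polynomial $t^2+t+c$. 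A $3$-dimensional weakly triangularizable subspace either contains a matrix with this characteristic polynomial, which is impossible, or it has a common invariant line, in which case it is $\Matt_2(\F)=\Mat_1(\F)\vee\Mat_1(\F)$, or it equals $\mathfrak{sl}_2(\F)$. To obtain this trichotomy we would argue on the quadratic form $\det$ restricted to the hyperplane, combined with the trace.

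For the inductive step the goal is to show that $S$ is reducible, i.e.\ that it stabilizes a nontrivial proper subspace $W$. Once this is done, write $S$ in block form with diagonal blocks $S_1\subset\End(W)$ and $S_2\subset\End(V/W)$, both weakly triangularizable. Since Theorem~\ref{theo:classical} applies (note $|\F|\geq n\geq n-1$), we have $\dim S_i\le\binom{d_i+1}{2}$. Together with the bound $d_1d_2$ on the off-diagonal part, this gives $\dim S\le\binom{n+1}{2}$, with equality forcing $S$ to be the full joint of $S_1$ and $S_2$, with each $S_i$ optimal. Induction then applies to each block, because $|\F|\geq n\geq d_i$.

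Proving reducibility is the main obstacle. Following the approach of \cite{dSPtriangularizable}, we would consider the subspace $\calN$ of nilpotent-like elements, or more precisely the elements of $S$ with a single eigenvalue. Using the trace and the Atkinson--Lloyd theorem on primitive bounded-rank spaces, which is where the hypothesis $|\F|\geq n$ enters, we would show the following: if $S$ were irreducible, one could find $x\in V$ such that $Sx$ is large, leading to a rank bound on a space of dimension larger than the primitive bound allows. Alternatively, we would show that a generic element $M=A+\lambda B$ would acquire an irreducible quadratic factor $t^2+\alpha t+\beta$ with $\alpha\neq0$.

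The characteristic-$2$ subtlety is that only trace-nonzero quadratics can be irreducible. We would therefore look for a $2$-dimensional subquotient on which the induced operators realize the irreducible polynomial $t^2+t+c$, since this exists precisely because $\F$ is not quadratically closed. The block $\mathfrak{sl}_2(\F)$ arises exactly in the case where such a subquotient is forced to be trace-free. The delicate point will be the counting with $|\F|\ge n$ needed to guarantee that the polynomial identities in $\lambda$ yield a genuine element of $S$, and we expect the bulk of the work to lie there.
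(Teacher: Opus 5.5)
Your reduction from reducibility to the conclusion is sound. If $S$ stabilizes some $W$ with $0<\dim W=d_1<n$, then the identity $\binom{d_1+1}{2}+\binom{d_2+1}{2}+d_1d_2=\binom{n+1}{2}$ and Theorem~\ref{theo:classical} force $S=S_1\vee S_2$ with both blocks optimal, and induction applies because $|\F|\geq n\geq d_i$. Your $n=2$ trichotomy is also true, and short to prove. Write $S=A^\bot$. A scalar $A$ gives $\mathfrak{sl}_2(\F)$, and a nonzero nilpotent $A$ gives a triangular space. Any other $A$ is similar to a companion matrix, and for it one exhibits an element of $A^\bot$ with characteristic polynomial $t^2+t+c$.

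The gap is that the real content of the theorem is never proved: that an optimal space is reducible when $n\geq 3$. You list three candidate strategies without carrying any of them out, and none works as stated. The elements of $S$ with a single eigenvalue need not form a linear subspace; in $\Mat_1(\F)\vee\mathfrak{sl}_2(\F)$ they form a quadric. The ``generic pencil'' idea has no mechanism producing the factor $t^2+\alpha t+\beta$. The Atkinson--Lloyd idea (``$Sx$ large'', ``dimension larger than the primitive bound allows'') is aimed at the wrong space, and it seeks a contradiction that counting cannot deliver.

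The missing idea is to pass to $S^\bot$. In $\dim S\leq (n-1)+\dim(S\cap(V^\star\otimes x))+\dim\overline{S_x}$ from Section~\ref{section:adaptedvectorsinduction}, plug in $\dim S=\binom{n+1}{2}$ and $\dim\overline{S_x}\leq\binom{n}{2}$. This gives $S\cap(V^\star\otimes x)\neq\{0\}$ for every $x\neq 0$. By \eqref{eq:dualtransitive}, $S^\bot$ is then intransitive, of dimension exactly $\binom{n}{2}$. This is where Atkinson--Lloyd enters, and the hypothesis $|\F|\geq n$ is accounted for by their requirement $n\leq|\F|$, not by polynomial identities in a pencil parameter.

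Since $\binom{n}{2}$ is exactly the Atkinson--Lloyd bound, no contradiction comes from dimension counting, and two cases remain:
\begin{itemize}
\item If $S^\bot$ is primitively intransitive, you need the equality case of the bound (Atkinson's characterization \cite{AtkinsonPrim}). You must then show that the resulting space is not weakly triangularizable for $n\geq 3$.
\item If it is not, the count carried out in Section~\ref{section:n=34} gives a nonzero $W$ with $\calN_W\subseteq S$ and an optimal induced space on $V/W$. By induction, $S$ is then a wall or a hurdle.
\end{itemize}
Even in the second case nothing is yet $S$-invariant, since $W$ itself need not be. You must still invoke the Erasure Lemma for walls, or Lemma~\ref{lemma:invariantsubspace} (that is, the Special Erasure Lemma~\ref{lemma:specialerasure}) for hurdles. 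These produce an $S$-invariant hyperplane or codimension-$2$ subspace, and only then does your block argument take over. These erasure results are where weak triangularizability is used beyond dimension counting, and nothing in your sketch replaces them.
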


In \cite{dSPtriangularizable}, this result was stated with the assumption that $\F$ be infinite (instead of $|\F| \geq n$), but an examination
of the proof (which essentially relies upon a reduction to Atkinson's theorem on primitive spaces of bounded rank matrices) reveals that the assumption
$|\F| \geq n$ suffices. This observation will turn out to be critical for our own proof.

Now, here is the main result we are aiming at in this article:

\begin{theo}\label{theo:majodim}
Let $\F$ be a perfect field with characteristic $2$ that is not quadratically closed, and with $|\F| > 2$.
Then the greatest possible dimension for a weakly triangularizable subspace of $\Mat_n(\F)$ is $\frac{n(n+1)}{2}\cdot$
\end{theo}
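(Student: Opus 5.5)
We argue by induction on $n$, proving simultaneously that $t_n(\F)=\frac{n(n+1)}{2}$. For $n \leq 1$ there is nothing to prove. If $\F$ is infinite, or more generally if $|\F| \geq n-1$, the result is already Theorem \ref{theo:classical}. So the interesting case is a finite field $\F=\F_{2^d}$ with $d \geq 2$ and $n > |\F|+1$.

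The plan is a reduction that lets us use Theorem \ref{theo:grandscardinaux} in a dimension $m$ with $m \leq |\F|$, combined with a dimension count. Let $S$ be a weakly triangularizable subspace of $\Mat_n(\F)$ with $\dim S > \frac{n(n+1)}{2}$; we seek a contradiction. The first step produces an invariant structure. I would try to show that $S$, or a subspace of $S$ of large enough codimension-controlled dimension, stabilizes a nontrivial proper subspace $W$ of $\F^n$. Then the diagonal compressions $S_W \subseteq \End(W)$ and $S_{\F^n/W} \subseteq \End(\F^n/W)$ are weakly triangularizable: the characteristic polynomial of each element of $S$ is the product of those of its two compressions, and a polynomial is split exactly when each factor is. With $p=\dim W$ and $q=n-p$, the bound $\dim S \leq \dim S_W+\dim S_{\F^n/W}+pq$ and the induction hypothesis give $\dim S \leq \frac{p(p+1)}{2}+\frac{q(q+1)}{2}+pq=\frac{n(n+1)}{2}$, which is the contradiction.

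The real work is getting such a $W$ when $S$ is "irreducible". I would take the following route. Fix a nonzero vector $x$ and consider the linear map $S \to \F^n$, $u \mapsto u(x)$, together with its kernel. Next take a hyperplane $H$ and the subspace of $S$ consisting of the elements that stabilize $H$. Its compression to $H$ lives in $\End(H)$, and it has dimension at least $\dim S - (n-1)$. Since $\dim S-(n-1) > \frac{(n-1)n}{2}$, induction forces a contradiction. Hence the map $u \mapsto$ (the class of $u$ restricted to $H$, modulo $H$), from $S$ to $\Hom(H,\F^n/H)$, has small kernel. Running this over all hyperplanes, and dually over all lines, should yield a Gerstenhaber/Atkinson-type situation. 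Namely, $S$ has dimension exceeding $\frac{n(n+1)}{2}$, yet every element has all eigenvalues in $\F$.

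Here the finite field enters through a counting trick. Any $u \in S$ has a spectrum of at most $|\F|$ distinct values. So for some scalar $\lambda$ the space $S-\lambda I$ contains many singular operators, and we can attempt to extract a subspace of bounded-rank or of common-kernel type. This is where I expect the main obstacle to be. Atkinson--Lloyd requires $|\F|\geq$ roughly the rank, which is precisely why Theorem \ref{theo:classical} needs $|\F| \geq n-1$.

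To circumvent this, I would pass to a field extension. The extension $\F_{2^{dk}}$, with $k$ odd, is still not quadratically closed. A space $S$ is weakly triangularizable over $\F$ if and only if its elements have split characteristic polynomials over $\F$. If we can show that $\F_{2^{dk}} \otimes S$ is weakly triangularizable, then Theorem \ref{theo:classical} over the large field gives the bound, since dimension is preserved. Weak triangularizability is not automatically preserved under extension, which is why the hypothesis $|\F|>2$ should matter. For a degree-$n$ polynomial identity we can argue as follows. The condition "characteristic polynomial splits with roots satisfying $x^{2^d}=x$" is polynomial in the coefficients of $u$ once we use $|\F|\ge 4$ and perfectness, which make squaring bijective. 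This identity holds on all of $S$, hence on its extension, provided it is of degree less than $|\F|$ in each variable. Verifying this degree bound, or replacing it by a reduction to $\mathfrak{sl}_2$-blocks via Theorem \ref{theo:grandscardinaux} on small invariant pieces, is the crux I would attack first.
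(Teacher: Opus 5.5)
Your proposal has a genuine gap. The reducible case is fine, but the irreducible case is the whole theorem, and it is never handled. The one concrete inequality you offer for it is wrong.

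\textbf{The compression step.} With $S_H:=\{u\in S : u(H)\subseteq H\}$ you do have $\dim S_H\geq \dim S-(n-1)$. However, the compression $u\mapsto u_H$ has kernel $S\cap(H^o\otimes V)$, the elements of $S$ vanishing on $H$, and this kernel can have dimension up to $n$. So induction only gives $\dim S\leq (n-1)+\dim\bigl(S\cap(H^o\otimes V)\bigr)+\dbinom{n}{2}$, that is, $\dim S\leq\dbinom{n+1}{2}+(n-1)$. This yields no contradiction.

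The missing idea is precisely to control this kernel. In the paper's dual form, one must find a \emph{weakly adapted} vector $x$, meaning $\dim\bigl(S\cap(V^\star\otimes x)\bigr)\leq 1$; then $\dim S\leq (n-1)+1+\dim\overline{S_x}\leq\dbinom{n+1}{2}$. The paper obtains such vectors by strengthening the induction to ``no $3$-complex covers all weakly $S$-adapted vectors''. This uses:
\begin{itemize}
\item the rough bound $\dim S\leq\dbinom{n+1}{2}+(n-3)$, from a rank-one argument on $S^\bot$;
\item the Covering Lemma with $p=3$ in $V^\star$, which is where $|\F|\geq 4$ is used, to choose $f_1,\dots,f_n$;
\item the induction hypothesis on each $\Ker f_i$, which forces $\dim W_i\geq 1+\lfloor 2n/3\rfloor$;
\item a final dimension count on $\F\id_V+\sum_i f_i\otimes W_i$;
\item a separate base case $n=3$: the Vanishing Lemma if $|\F|>4$, and Theorems \ref{theo:classical} and \ref{theo:grandscardinaux} for $\F_4$.
\end{itemize}
Your ``counting trick'' on singular elements of $S-\lambda\id$ does not produce any bounded-rank or common-kernel structure that could replace this.

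\textbf{The field-extension fallback.} This does not close the gap either. Triangularizability of $u$ over $\F_q$ amounts to $(u^q-u)^n=0$, a polynomial condition of degree $qn$ in the coordinates of $u$. Over $K=\F_{q^k}$ one needs the different condition $(u^{q^k}-u)^n=0$. A polynomial function vanishing on all $\F_q$-points of $S$ need not vanish on its $K$-points once its degree in some variable reaches $q$. So there is no degree bound to ``verify'', and you provide no transfer principle.

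Note also that nothing in this part uses $|\F|>2$, since squaring is bijective on $\F_2$ as well. If weak triangularizability passed to extensions, Theorem \ref{theo:classical} over $\F_{2^k}$ with $2^k\geq n-1$ would also settle $\F_2$, a case the paper explicitly leaves open. So the step you call the crux is not a technicality: it is unproved, and it carries the entire content of the theorem.
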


Our proof will not require any finiteness assumption on $\F$, although a small step (Lemma \ref{lemma:polylemma})
has a simpler proof for finite fields.

This result is not the only one that we will prove in this article. Indeed, we will also
obtain a result on the existence of so-called \emph{adapted vectors}. Adapted vectors are the key
to decipher the spaces that have the greatest possible dimension when the field $\F$ has small cardinality: it is precisely the method that was used
in \cite{dSPfeweigenvalues2} to study those spaces over finite fields with odd characteristic.
This refined result will help us, in a future sequel to this article, to generalize Theorem \ref{theo:grandscardinaux} to all finite fields
with characteristic $2$ and more than $2$ elements.

We now quickly explain the main strategy for proving Theorem \ref{theo:majodim}.
It was discovered in \cite{dSPtriangularizable} that proving the analogue theorem for NRC fields could be done in a way that is very close
to the one used a decade ago \cite{dSPfeweigenvalues} to obtain similar results on spaces of matrices that have at most two eigenvalues in their field of definition
(a special case of the strong Loewy-Radwan problem). This connection is indeed quite strange, because although both problems deal with the spectrum of the matrices under consideration, they are almost polar opposites (in one problem, we assume that there are not too many distinct eigenvalues, while in the other one we assume on the contrary that there be as many eigenvalues, counted with multiplicities, as possible!).
The present article will exploit recent advances in the problem of spaces of matrices with no more than two eigenvalues
\cite{dSPfeweigenvalues2}. We will recycle some parts of the strategy, with the main difficulties entirely concentrated in adapting some key lemmas.

The remainder of the article is organized as follows.
The idea is to arrive as quickly as possible to the proof of Theorem \ref{theo:majodim}.
\begin{itemize}
\item In Section \ref{section:basicmaterial}, we introduce the main concepts for the proof of Theorem \ref{theo:majodim}.
We briefly recall the definition of an adapted vector (for a subspace of endomorphisms) and explain how such a vector can be used for an inductive proof of Theorem \ref{theo:majodim}. Then we recall some key notions in combinatorics of
finite-dimensional vector spaces over finite fields, along with an important property of the null set of homogeneous polynomial functions on such spaces
(the Vanishing Lemma). Finally we recall some facts on trace dual orthogonal complements of operator spaces.

\item The second part of the article deals with the existence of adapted vectors in weakly triangularizable subspaces.
We shall in fact start with the weaker notion of a weakly adapted vector, which turns out to be sufficient to yield
the upper-bound in Theorem \ref{theo:majodim}. The proof of the existence of such vectors will be obtained by induction,
requiring the use of a difficult result from \cite{dSPtriangularizable} for the very special case of $3$-by-$3$ matrices over $\F_4$
(the remaining part of the proof is, however, essentially self-contained).

\item The next, and largest, part of the article is devoted to the issue of the existence of adapted vectors.
It is not true that such vectors always exist, and the obstruction comes from certain operator spaces that we call \emph{hurdles}:
these spaces are defined in Section \ref{section:hurdles}. An important result proved in \cite{dSPtriangularizable} (called the Second Erasure Lemma there)
essentially says that understanding a weakly triangularizable subspace of $\Mat_n(\F)$ that is a hurdle can be reduced
to the classification of the weakly triangularizable subspaces of $\Mat_{n-2}(\F)$.
The main result of the last part of the article is that every weakly triangularizable subspace that is not a hurdle
has an adapted vector. This involves a very intricate inductive proof, which mimics the analogue one
for spaces of matrices with at most two eigenvalues \cite{dSPfeweigenvalues2}, but with several different technical details.
These results are essential to pave the way for a future study of the optimal weakly triangularizable spaces over finite fields with characteristic $2$.
\end{itemize}

Before we move forward, we would like to comment on the fact that we left out fields with cardinality $2$.
We conjecture indeed that the result of Theorem \ref{theo:majodim} holds for such fields as well.
Although we want to reach the greatest possible generality, fields with cardinality $2$ have shown remarkable resistance to our methods of analysis.
Our proof here will use the assumption $|\F|>2$ in countless many instances, and we have failed to adapt it to encompass $\F_2$.
It is clear that a completely different approach will be required to handle $\F_2$.
Interestingly, for $\F_2$ the weakly triangularizable spaces are the $\overline{1}^\star$-spec subspaces studied in \cite{dSPfeweigenvalues2}, i.e.,
the linear subspaces of matrices in which every matrix has at most one nonzero eigenvalue in an algebraic closure of $\F$. However, this remark is of no help to us
because the methods from \cite{dSPfeweigenvalues2} do not have any clear adaptation to fields with cardinality $2$.

\section{Main technical concepts, and the adapted vectors method}\label{section:basicmaterial}

\subsection{Basics on duality}\label{section:duality}

Throughout, we let $V$ be a nonzero finite-dimensional vector space, and we denote by $V^\star:=\Hom(V,\F)$ its dual vector space.
For a nonzero vector $x \in V$ we denote by $x^\bot$ the space of all linear forms $\varphi \in V^\star$ such that $\varphi(x)=0$
(it is a linear hyperplane of $V^\star$).
For a vector $y \in V$ and a linear form $\varphi \in V^\star$, we write
$$\varphi \otimes y : x \in V \mapsto \varphi(x)\,y.$$
Beware that most authors use the notation $y \otimes \varphi$ instead, but our notation is better suited to computations, e.g, as
we observe that $\tr(\varphi \otimes y)=\varphi(y)$.
Of course, here we have just described all the endomorphisms of $V$ with rank at most $1$.

For a linear subspace $H$ of $V$, we denote by
$$H^o:=\{f \in V^\star : \forall x \in H, \; f(x)=0\}$$
its dual orthogonal, which is a linear subspace of $V^\star$ with dimension $\dim V-\dim H$;
for a linear subspace $G$ of $V^\star$, we denote by
$${}^o G:=\{x \in V : \forall f \in G, \; f(x)=0\}$$
its pre-dual orthogonal,  which is a linear subspace of $V$ with dimension $\dim V-\dim G$.
Classically ${}^o (H^o)=H$ and $({}^o G)^o=G$.

\subsection{Definition of adapted vectors, and the basic inductive method}\label{section:adaptedvectorsinduction}

\begin{Def}
Let $S \subseteq \End(V)$ be a linear subspace of $\End(V)$.

A nonzero vector $x \in V \setminus \{0\}$ is called
\textbf{$S$-adapted} whenever $S$ contains no operator with range $\F x$ and trace zero, or in other words
$S \cap (x^\bot \otimes x)=\{0\}$.

A nonzero vector $x \in V \setminus \{0\}$ is called \textbf{weakly $S$-adapted} whenever
$\dim(S \cap (V^\star \otimes x)) \leq 1$.
\end{Def}

Note that the definition of a weakly $S$-adapted vector is different here from the one chosen in \cite{dSPfeweigenvalues2},
where instead we required the weaker assumption that $\dim(S \cap (x^\bot \otimes x)) \leq 1$.

Note that every $S$-adapted vector is also weakly $S$-adapted. In \cite{dSPtriangularizable} we tried to obtain $S$-adapted vectors
to prove the inequality $t_n(\F) \leq \frac{n(n+1)}{2}$ by induction, but it is easily seen that having weakly $S$-adapted vectors is full sufficient for this task.
Let us immediately explain why.

So, let $V$ be an $n$-dimensional vector space over $\F$, with $n \geq 2$, and let $S$ be a weakly triangularizable subspace of $\End(V)$.
Let $x \in V \setminus \{0\}$, which we take as completely arbitrary at first. We consider the linear subspace
$$S_x :=\{u \in S : \; u(x) \in \F x\},$$
and we apply the rank theorem to obtain
$$\dim S \leq n-1+\dim S_x.$$
Then, every $u \in S_x$ induces an endomorphism $\overline{u}$ of $V/\F x$, and we set
$$\overline{S_x} :=\{\overline{u} \mid u \in S_x\} \subseteq \End(V/\F x),$$
which is a weakly triangularizable subspace of $\End(V/\F x)$. The rank theorem yields
$$\dim S_x =\dim (S \cap (V^\star \otimes x))+\dim \overline{S_x}.$$
Hence, if $x$ is weakly $S$-adapted and we have $\dim \overline{S_x} \leq \dbinom{n}{2}$ thanks to an induction hypothesis,
then we immediately recover $\dim S \leq \dbinom{n+1}{2}$.

So, in order to obtain Theorem \ref{theo:majodim} it suffices to prove that \emph{every} weakly triangularizable subspace of $\End(V)$
has a weakly adapted vector (whatever the space $V$, with nonzero dimension, under consideration).
However, this is easier said than done.

\subsection{The Covering Lemma, and an important lemma on homogeneous polynomials}

Now we recall two very important tools in our search for adapted vectors or for weakly adapted vectors.
The second one is a generalization of the first one. We refer to \cite{dSPfeweigenvalues} (lemma 2.5 there)
for a proof of the first one, and to \cite{dSPfeweigenvalues2} for a proof of the second one (lemma 4.2 there).

\begin{lemma}[Covering Lemma]\label{coveringlemma2}
Let $p$ be a positive integer such that $|\F|>p$.
Let $V$ be an $n$-dimensional vector space over $\F$, and $(V_i)_{i \in I}$
be a family of linear subspaces of $V$ in which :
\begin{enumerate}[(i)]
\item $|I|=(n-1)p+1$ ;
\item For all $k \in \lcro 1,n-2\rcro$, exactly $p$ vector spaces in the family have dimension $k$;
\item Exactly $p+1$ vector spaces in the family have dimension $n-1$.
\end{enumerate}
Then $V \neq \underset{i \in I}{\bigcup} V_i$.
\end{lemma}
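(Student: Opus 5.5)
We prove that $V$ is not covered by induction on $n$. The case $n=1$ is degenerate: there are no proper subspaces to use, and the union of a family of zero subspaces misses every nonzero vector. For $n=2$ the family consists of exactly $p+1$ lines, and a line $V_i$ is the whole of $V$ only in dimension $1$, so it suffices to note that $V$ contains $|\F|+1>p+1$ distinct lines. Concretely, the lines of $\F^2$ are parametrised by $\mathbb{P}^1(\F)$, which has $|\F|+1\geq p+2$ points. Hence some line avoids all the $V_i$, and any nonzero vector on it is not covered. (For $n=2$ the condition on dimensions $1,\dots,n-2$ is vacuous, and $|I|=p+1$ is consistent.)

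For the inductive step, assume $n\geq 3$ and choose a hyperplane $H$ of $V$ that is distinct from the $p+1$ hyperplanes in the family. This is possible because $V$ has at least $|\F|+1\geq p+2$ hyperplanes. Indeed, the hyperplanes containing a fixed $(n-2)$-dimensional subspace are in bijection with the lines of a $2$-dimensional quotient, so there are already $|\F|+1$ of them.

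Now intersect the family with $H$. Each of the $p+1$ hyperplanes $V_i$ meets $H$ in a subspace of dimension exactly $n-2$, since $V_i \neq H$. Each subspace $V_i$ with $\dim V_i=k\leq n-2$ satisfies $\dim(V_i\cap H)\leq k$.

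We would like to feed this family into the induction hypothesis in the $(n-1)$-dimensional space $H$. That hypothesis requires exactly $p$ subspaces in each dimension $1,\dots,n-3$ and $p+1$ subspaces of dimension $n-2$. What we actually have is $p+1$ subspaces of dimension $n-2$ (the intersections with the hyperplanes), $p$ further subspaces of dimension at most $n-2$, and $p$ subspaces of dimension at most $k$ for each $k\leq n-3$.

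To fix the count, we enlarge some subspaces and discard others. Enlarging a subspace only enlarges the union, so it is harmless. We discard the $p$ subspaces that come from the dimension-$(n-2)$ members of the original family, and we enlarge each $V_i\cap H$ with $\dim V_i=k\leq n-3$ to a subspace of $H$ of dimension exactly $k$.

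Discarding subspaces is not harmless, however, and this is the main obstacle. So instead we choose $H$ more cleverly. The natural fix is to pick $H$ containing one of the $(n-2)$-dimensional members, say $W$, while still being distinct from the $p+1$ hyperplanes of the family. This is possible because among the $|\F|+1\geq p+2$ hyperplanes containing $W$, at most $p+1$ belong to the family.

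Then $W\subseteq H$ has dimension $n-2$, and each family hyperplane meets $H$ in dimension $n-2$. We now have $p+2$ subspaces of dimension $n-2$, which is still too many. This suggests the better strategy: do the induction directly on \emph{vectors} rather than on the subspace counts, and argue by counting the hyperplanes that contain $W$.

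Concretely, the argument runs as follows. Suppose the family covers $V$. Consider the $|\F|+1$ hyperplanes through a fixed member $W$ of dimension $n-2$; these hyperplanes partition $V\setminus W$. At most $p+1$ of them lie in the family, so some hyperplane $H\supseteq W$ outside the family must be covered by the remaining subspaces. There are $p-1$ other members of dimension $n-2$, and the $p+1$ family hyperplanes each meet $H$ in dimension $n-2$. The traces on $H$ of the lower-dimensional members also remain available.

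Choosing $H$ among the at least $|\F|-p\geq 1$ admissible hyperplanes through $W$, we want each of the $p-1$ other $(n-2)$-dimensional members to be either contained in $W$ (impossible unless it equals $W$) or to meet $H$ in dimension $n-3$ only. The member $W$ itself lies inside $H$, but it is already inside a family hyperplane's trace only if we are lucky; at worst it is discarded by merging it, since $W$ and a family hyperplane trace are both of dimension $n-2$ in $H$.

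The delicate point is the bookkeeping. We must pick $H$, using $|\F|>p$, so that the induced family in $H$, after enlarging each trace to the prescribed dimension, has at most $p$ members in each dimension $k\leq n-3$ and at most $p+1$ members of dimension $n-2$. The count requires shifting the $p-1$ remaining $(n-2)$-dimensional traces down into dimension $n-3$, which in turn forces the original $(n-3)$-dimensional members down a level, and so on.

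Because the bound is tight, I would expect to follow the proof of lemma 2.5 of \cite{dSPfeweigenvalues} here rather than re-derive it. The alternative route is a direct counting of covered vectors for finite $\F$, combined with a reduction to a finite subset of $\F$ of size $p+1$ in the infinite case. That reduction uses an $\F_0$-structure when $\F$ is infinite, or a polynomial-vanishing argument: the union of the $V_i$ is the zero set of a product of linear forms of total degree at most $(n-1)p+1$, and one exhibits a point of a grid $\Sigma^n$, with $\Sigma\subseteq\F$ of size $p+1$, at which that product does not vanish.
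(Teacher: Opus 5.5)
\textbf{There is a genuine gap: you never complete an argument.} Your inductive step is abandoned twice. Discarding members is not allowed, and the pencil through $W$ produces $p+2$ hyperplanes of $H$. The bookkeeping is never carried out, and you end by deferring to lemma 2.5 of \cite{dSPfeweigenvalues}. That is exactly what the paper itself does, so your text supplies no proof of its own.

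The fallback routes also fail as stated:
\begin{itemize}
\item The union of the $V_i$ is not the zero set of a product of linear forms, since a member of dimension at most $n-2$ is not cut out by one form. For the Combinatorial Nullstellensatz you would have to choose the forms so that some monomial with all exponents at most $p$ has a nonzero coefficient, and you do not do this.
\item A crude count of covered vectors in $V$ itself is too weak. Over $\F_q$ with $p=q-1$ and $n=4$, a pencil of $q$ hyperplanes leaves $q^2(q-1)$ vectors uncovered. The remaining $q-1$ lines and $q-1$ planes may contain up to $(q-1)^2(q+2)\geq q^2(q-1)$ nonzero vectors, so counting alone cannot conclude.
\item For infinite $\F$ no reduction is needed at all: a finite union of proper subspaces never covers $V$.
\end{itemize}

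\textbf{The missing idea is already within reach in your first attempt.} Besides requiring $H$ to differ from the $p+1$ family hyperplanes, also require that $H$ contain none of the members $V_i$ with $\dim V_i\leq n-2$. Then $\dim(V_i\cap H)=\dim V_i-1$ for each such member, and each family hyperplane meets $H$ in dimension $n-2$. So the traces on $H$ are exactly:
\begin{itemize}
\item $p+1$ subspaces of dimension $n-2$;
\item $p$ subspaces of each dimension $1,\dots,n-3$;
\item $p$ copies of $\{0\}$, which can be dropped.
\end{itemize}
This is precisely the hypothesis in dimension $n-1$, so no discarding or merging is needed. A vector of $H$ outside all traces lies outside all the $V_i$.

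\textbf{Existence of such an $H$.} Writing $H=\Ker f$, you need $f\in V^\star$ outside all annihilators $V_i^o$. These are $p+1$ lines, $p$ subspaces of each dimension $2,\dots,n-2$, and only $p$ hyperplanes of $V^\star$. This is not circular, because a crude count now suffices. Over $\F_q$ with $p\leq q-1$, the complement of $p$ hyperplanes has at least $(q+1-p)(q-1)q^{n-2}\geq 2(q-1)q^{n-2}\geq q^{n-1}$ elements. The other annihilators contain at most $(q-1)(q+q^2+\cdots+q^{n-2})=q^{n-1}-q$ nonzero vectors. Together with your base case $n=2$, this closes the induction.
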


\begin{lemma}[Vanishing Lemma for Homogeneous Polynomials]\label{lemma:vanishinghomogeneous}
Let $d$ and $m$ be positive integers such that $|\F| \geq m \geq d$.
Let $V$ be a vector space with dimension $n$ over $\F$, and
$p : V \rightarrow \F$ be a $d$-homogeneous polynomial function.

Assume that there exists a finite family $(V_i)_{i \in I}$ of proper linear subspaces of $V$ in which:
\begin{enumerate}[(i)]
\item For each $j \in \lcro 1,n-2\rcro$, at most $m-1$ indices $i$ satisfy
$\dim V_i=j$;
\item At most $m-d$ indices $i$ satisfy $\dim V_i=n-1$;
\item The function $p$ vanishes outside of $\underset{i \in I}{\bigcup} V_i$.
\end{enumerate}
Then $p=0$.
\end{lemma}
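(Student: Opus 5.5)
The plan is a double induction: on $n$ for the restriction step, and on $d$ for the division step. Throughout, I treat $p$ as a formal homogeneous polynomial of degree $d$ in coordinates, so that divisibility by linear forms makes sense. Subspaces $V_i$ of dimension $0$ play no role, since $p(0)=0$, so I discard them.

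\emph{Small cases.} For $n=1$ there are no nonzero proper subspaces, so $p$ vanishes on $V\setminus\{0\}$. Writing $p(x)=a x^d$ and evaluating at any nonzero $x$ gives $a=0$. For $n=2$, condition (i) is void, and the $V_i$ are at most $m-d$ lines. Among the $|\F|+1\geq m+1$ lines of $V$, the form $p$ therefore vanishes on at least $d+1$ of them. A nonzero binary form of degree $d$ has at most $d$ projective zeros, so $p=0$.

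\emph{Choosing a good hyperplane} (for $n\geq 3$ and $d \geq 1$). I want a linear hyperplane $H=\Ker f$ that contains none of the nonzero $V_i$. The hyperplanes containing $V_i$ are exactly the nonzero $f\in V_i^o$, so I need $f\in V^\star$ lying outside $\bigcup_i V_i^o$. This is a job for the Covering Lemma (Lemma \ref{coveringlemma2}) applied in $V^\star$ with $p:=m-1$, which satisfies $|\F|>m-1$. The $V_i^o$ with $\dim V_i=j\in\lcro 1,n-2\rcro$ have dimension $n-j\in\lcro 2,n-1\rcro$, with at most $m-1$ of each dimension. Those with $\dim V_i=n-1$ have dimension $1$, and there are at most $m-d\leq m-1$ of them. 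After padding the family with extra subspaces so that the exact counts of Lemma \ref{coveringlemma2} hold, the Covering Lemma yields a nonzero $f$ outside the union.

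\emph{Restriction to $H$.} Since no $V_i$ lies in $H$, each $V_i\cap H$ has dimension exactly $\dim V_i-1$. In $H$, which has dimension $n-1$, the subspaces of dimension $j\leq n-3$ come from the $V_i$ of dimension $j+1\leq n-2$, so there are at most $m-1$ of each. The hyperplanes of $H$ come from the $V_i$ of dimension $n-1$, so there are at most $m-d$ of them. Moreover $p|_H$ vanishes outside $\bigcup_i (V_i\cap H)$. The induction hypothesis on $n$ then gives $p|_H=0$, hence $p=f\,q$ with $q$ homogeneous of degree $d-1$.

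\emph{Induction on $d$.} The polynomial $q$ vanishes outside $\bigcup_i V_i\cup H$. This enlarged family still has at most $m-1$ members of each dimension $j\leq n-2$, and at most $m-d+1=m-(d-1)$ hyperplanes. This is exactly the hypothesis of the lemma for degree $d-1$. When $d-1\geq 1$, induction on $d$ gives $q=0$. When $d-1=0$, $q$ is a constant vanishing outside a union that, by the same Covering Lemma count (now with up to $m$ hyperplanes, which is allowed since $p+1=m$), does not cover $V$; so $q=0$. In either case $p=0$.

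\emph{Main obstacle.} The delicate point is the passage between polynomial functions and formal polynomials when $\F$ is finite and $d=|\F|$. There, distinct formal polynomials can induce the same function, so both "$p|_H=0\Rightarrow f\mid p$" and the binary-form root count need care. I would handle this by proving the statement for formal homogeneous polynomials, checking that the restriction and division steps stay within degree $\leq |\F|$. In the borderline case I would use the fact that a homogeneous form of degree $\leq|\F|$ vanishing on a hyperplane as a function is divisible by its equation. The Covering Lemma bookkeeping, by contrast, looks routine.
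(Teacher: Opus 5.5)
\textbf{Verdict.} Your proof is correct. The paper gives no proof of this lemma to compare it with: it imports the result as lemma 4.2 of \cite{dSPfeweigenvalues2}, and only remarks that the Covering Lemma is a special case of it. So your argument is a self-contained alternative rather than a reconstruction.

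\textbf{The bookkeeping checks out.}
\begin{enumerate}[(1)]
\item In $V^\star$, the annihilators $V_i^o$ of the nonzero $V_i$ have at most $m-1$ members of each dimension $1,\dots,n-1$. So the Covering Lemma with parameter $m-1$ (legitimate since $|\F|\geq m$), after padding, gives $f\notin\bigcup_i V_i^o$ and hence the hyperplane $H=\Ker f$.
\item The traces $V_i\cap H$ satisfy the hypotheses in $H$ with the same pair $(m,d)$.
\item Adjoining $H$ to the family turns the hypotheses for $(m,d)$ into those for $(m,d-1)$.
\item The degree-$0$ endpoint is exactly the Covering Lemma with $m$ hyperplanes.
\item The double induction, on $n$ and then on $d$, is well founded.
\end{enumerate}

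\textbf{What your route gives.} It shows that the Vanishing Lemma follows from the Covering Lemma by a restriction-and-division induction. This is not circular, because the Covering Lemma is proved independently in \cite{dSPfeweigenvalues}. The paper's remark supplies the converse: remove one hyperplane $\Ker f$ from a would-be covering family, and apply the Vanishing Lemma to $f$ with $d=1$ and $m=p+1$. Together, the two lemmas are therefore equivalent.

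\textbf{Two small remarks.}
\begin{enumerate}[(1)]
\item The case $m=1$ is not covered by the Covering Lemma as stated, since $p=m-1$ must be positive. It is trivial, though: all $V_i$ are then zero.
\item The ``main obstacle'' you flag is not really one. Fix a formal homogeneous representative of $p$ and prove that \emph{it} is formally zero. Then the induction hypothesis applied to $p(x_1,\dots,x_{n-1},0)$ already gives formal vanishing. Divisibility by $x_n$ is formal, and so is the root count for binary forms. No fact about polynomial functions of degree at most $|\F|$ is needed.
\end{enumerate}
The formal reading is in fact the only correct one. Over $\F_q$, the function $(x,y)\mapsto x(1-y^{q-1})$ satisfies $p(\lambda v)=\lambda p(v)$ and vanishes off the line $y=0$. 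So with $d=1$ and $m=2$, a merely scaling-homogeneous interpretation would make the statement false.
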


As was noted in \cite{dSPfeweigenvalues2}, the Covering Lemma can be seen as a special case of Lemma \ref{lemma:vanishinghomogeneous},
but it is useful to state it independently because it is frequently used.

We finish with the notion of a $k$-complex of a vector space:

\begin{Def}
Let $k \geq 2$ and $V$ be an $n$-dimensional vector space.
A \textbf{$k$-complex} of $V$ is a $(k-1)n$-list $(V_1,\dots,V_{(k-1)n})$ of linear subspaces
of $V$ such that $\dim V_i=1+\lfloor \frac{i-1}{k}\rfloor$ for all $i \in \lcro 1,(k-1)n\rcro$,
i.e., the first $k$ spaces have dimension $1$, the next $k$ spaces have dimension $2$, and so on.
\end{Def}

\subsection{Trace orthogonality}

Let $S$ be a linear subspace of $\End(V)$. We set
$$S^\bot:=\{v \in \End(V) : \tr(vu)=0\},$$
which is a linear subspace of $\End(V)$. It is the orthogonal complement of $S$ under the nondegenerate symmetric bilinear form
$(u,v) \in \End(V)^2 \mapsto \tr(uv)$, and in particular it enjoys the property that $\dim S^\bot=\dim \End(V)-\dim S$
and $S^{\bot\bot}=S$.
One important property is the identity
\begin{equation}\label{eq:dualtransitive}
\forall x \in V \setminus \{0\}, \quad \dim (S^\bot x)=\dim V-\dim (S \cap (V^\star \otimes x)),
\end{equation}
for which we refer to lemma 3.3 of \cite{dSPtriangularizable}.

\section{The greatest possible dimension for a weakly triangularizable subspace}

The goal of the present section is to prove the following result, which reinforces Theorem \ref{theo:majodim}:

\begin{theo}\label{theo:weaklyadapted}
Assume that $|\F|>3$ and that $\F$ is not quadratically closed.
Let $V$ be an $n$-dimensional vector space over $\F$, and $S$ be a weakly triangularizable linear subspace of $\End(V)$.
\begin{enumerate}[(a)]
\item If $n \geq 3$ then the union of a $3$-complex of $V$ never contains all the weakly-$S$-adapted vectors.
\item One has $\dim S \leq \dbinom{n+1}{2}\cdot$
\end{enumerate}
\end{theo}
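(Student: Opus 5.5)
We prove (a) and (b) together by induction on $n$. For $n=1$, (b) is trivial. For $n=2$, the inequality $\dim S\leq 3$ follows from Theorem \ref{theo:classical}, since $|\F|\geq 1=n-1$. Assume $n\geq 3$. Then (b) follows from (a): a $3$-complex exists, so (a) gives at least one weakly $S$-adapted vector $x$. The space $\overline{S_x}$ is a weakly triangularizable subspace of $\End(V/\F x)$, so the induction hypothesis gives $\dim \overline{S_x}\leq \binom{n}{2}$. The computation of Section \ref{section:adaptedvectorsinduction} then yields $\dim S\leq \binom{n+1}{2}$. So the real work is (a).

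For (a), we argue by contradiction. Let $(V_1,\dots,V_{2n})$ be a $3$-complex whose union contains every weakly $S$-adapted vector. Then every $x$ outside $\bigcup_i V_i$ satisfies $\dim(S\cap(V^\star\otimes x))\geq 2$, which by \eqref{eq:dualtransitive} means $\dim S^\bot x\leq n-2$. This gives a large supply of rank-$\leq 1$ operators in $S$. For such an $x$, take two independent forms $\varphi,\psi$ with $\varphi\otimes x,\psi\otimes x\in S$. Some nonzero combination $\alpha\varphi+\beta\psi$ vanishes at $x$, so $S$ contains a nonzero nilpotent $\theta\otimes x$ with $\theta(x)=0$. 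We therefore have many pairs $(x,\theta)$ with $x\notin\bigcup V_i$ and $\theta\in x^\bot\setminus\{0\}$.

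The core step is to use these operators to produce a non-triangularizable element of $S$. Since $\F$ is perfect of characteristic $2$ and not quadratically closed, there is an irreducible polynomial $t^2+t+a$ over $\F$ (Artin--Schreier). Take two such operators $\theta\otimes x$ and $\eta\otimes y$ with $x,y$ independent and suitable cross-values $\theta(y)$, $\eta(x)$. Then a linear combination $\lambda\,\theta\otimes x+\mu\,\eta\otimes y$ acts on $\Vect(x,y)$ and has rank at most $2$, so its nonzero spectrum is that of a $2\times 2$ matrix with zero diagonal and off-diagonal product $\lambda\mu\,\theta(y)\eta(x)$. Adding a further element $u\in S$ that stabilizes $\Vect(x,y)$, or adding $\varphi\otimes x$ with $\varphi(x)\neq 0$, should let us tune the trace and determinant of the induced $2\times 2$ block so that its characteristic polynomial is $t^2+t+a$. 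That would contradict weak triangularizability.

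The cases where the cross-values vanish identically, so that such a tuning is impossible, need a structural treatment. In those cases the map $x\mapsto S\cap(x^\bot\otimes x)$ is degenerate. We then pass to a quotient: choose $x\notin\bigcup V_i$, consider $\overline{S_x}\subseteq\End(V/\F x)$, and note that the images of the $V_i$ in $V/\F x$ can be regrouped into a $3$-complex of $V/\F x$, since dimensions drop by at most one and there are three spaces per dimension. The induction hypothesis (a) in dimension $n-1$ then provides a weakly $\overline{S_x}$-adapted vector $\bar y$ outside that complex. Lifting $\bar y$ to $y\notin\bigcup V_i$, we compare $S\cap(V^\star\otimes y)$ with $\overline{S_x}\cap((V/\F x)^\star\otimes\bar y)$ to force the needed cross-values to be nonzero, or else to show directly that $y$ is weakly $S$-adapted. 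The Covering Lemma with $p=3$ (which uses $|\F|>3$) guarantees at each stage that we can choose vectors avoiding the $3n-2$-type unions that appear.

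The base case $n=3$ of (a) has to be handled separately. When $|\F|\geq 3$, Theorem \ref{theo:grandscardinaux} describes the $6$-dimensional spaces, and the cited special result from \cite{dSPtriangularizable} for $3\times 3$ matrices over $\F_4$ covers the remaining small configurations. The main obstacle is the core step of producing an irreducible quadratic block, particularly controlling the cross-values $\theta(y)\eta(x)$ uniformly for all $x$ outside the complex. This is where the $3$-complex bound and $|\F|>3$ must enter, and I would first try to reduce it to a Vanishing Lemma argument applied to the bilinear quantity $(x,y)\mapsto\theta_x(y)\eta_y(x)$.
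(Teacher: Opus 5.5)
You have a genuine gap: part (a) is never actually proved.

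\textbf{What fails in your argument for (a).} Your reduction of (b) to (a) is correct and is exactly what the paper does. The ``core step'' for (a), however, is only a hope. Nothing guarantees, for $x,y$ outside $\bigcup_i V_i$, nilpotent tensors $\theta\otimes x,\eta\otimes y\in S$ with $\theta(y)\eta(x)\neq 0$. Nothing guarantees an element $\varphi\otimes x\in S$ with $\varphi(x)\neq 0$ either, since the plane $S\cap(V^\star\otimes x)$ may lie inside $x^\bot\otimes x$. The suggested Vanishing Lemma reduction is not well posed: $\theta_x$ is merely chosen in a subspace depending on $x$, so $(x,y)\mapsto\theta_x(y)\eta_y(x)$ is not a homogeneous polynomial function on $V$.

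Your fallback through $V/\F x$ also fails as stated. Since $x\notin\bigcup_i V_i$, no image of a $V_i$ loses a dimension. You therefore get $2n$ subspaces following the $3$-complex pattern in an $(n-1)$-dimensional space, while a $3$-complex there has only $2n-2$ members. Moreover, a weakly $\overline{S_x}$-adapted $\bar y$ only yields $\dim(S\cap(V^\star\otimes y))\leq 2$, not $\leq 1$.

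Your base case $n=3$ invokes Theorem \ref{theo:grandscardinaux}. That theorem needs $\F$ perfect of characteristic $2$, which this statement does not assume (you also assume it silently in the Artin--Schreier step). It also needs $\dim S=6$, which you never establish.

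\textbf{The missing idea.} The paper uses a counting argument rather than an explicit non-triangularizable element.
First one proves the rough bound $\dim S\leq\binom{n+1}{2}+(n-3)$ of Claim \ref{claim:roughbound}. Induction (b) gives $\binom{n+1}{2}+(n-2)$. Equality would force $\dim S^\bot x\leq 1$ for every $x$, so all elements of $S^\bot$ map into a single line $D$. Then $S$ contains every operator vanishing on $D$, which is absurd.

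For (a) with $n\geq 4$, apply the Covering Lemma with $p=3$ in $V^\star$. This gives a basis $(f_1,\dots,f_n)$ of $V^\star$ such that no $V_j$ lies in any $\Ker f_i$.
Intersecting with $H:=\Ker f_i$ lowers every dimension by exactly one, kills $V_1,V_2,V_3$, and frees one slot. Hence $(V_4\cap H,\dots,V_{2n}\cap H,G)$ is a $3$-complex of $H$ for any $G\supseteq W_i:=\{x\in H : f_i\otimes x\in S\}$ with $\dim G=\lfloor 2n/3\rfloor$.
Now apply induction (a) to the space $\calT\subseteq\End(H)$ induced by $S\cap\Hom(V,H)$. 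A weakly $\calT$-adapted vector outside this complex would be non-weakly-$S$-adapted and would then have to lie in $W_i\subseteq G$. This forces $\dim W_i\geq 1+\lfloor 2n/3\rfloor$.
Since $W_i\subseteq\Ker f_i$, the sum $\F\id_V\oplus\bigoplus_i f_i\otimes W_i$ is direct. It lies in the weakly triangularizable space $S+\F\id_V$ and has dimension at least $1+n+n\lfloor 2n/3\rfloor$, which exceeds the rough bound.

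For $n=3$ the paper splits by field size:
\begin{enumerate}[(i)]
\item If $|\F|>4$, apply the Vanishing Lemma to the $2\times 2$ minors of $\widehat{x}:v\in S^\bot\mapsto v(x)$.
\item If $|\F|=4$, a basis of non-weakly-adapted vectors gives $\dim S\geq 6$. Theorem \ref{theo:grandscardinaux} then confines the non-weakly-adapted vectors to a hyperplane, contradicting the Covering Lemma.
\end{enumerate}
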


The provision $n \geq 3$ in point (a) is unavoidable because every $3$-complex of $V$ contains $V$ as fourth component if $n=2$.

Note that the result holds even over fields with characteristic other than $2$, and the proof will encompass all such fields with the notable exception of those with cardinality $3$.
As a consequence, it almost fully encompasses theorem 1.1 of \cite{dSPtriangularizable}, leaving out only the fields with cardinality $3$
(for which the approach used in \cite{dSPtriangularizable}, which is unsuited to perfect fields with cardinality $2$, seems unavoidable).

Our proof of Theorem \ref{theo:weaklyadapted} is inductive.
One of its main features is that, in contrast with the method used in \cite{dSPtriangularizable},
we do not separate the proof of point (a) from the one of point (b) in the induction.

Throughout, we let $\F$ be a non-quadratically closed field with more than $3$ elements.

\subsection{The case $n=2$}

Here we quickly settle the case $n=2$. So, we let $V$ be a $2$-dimensional vector space over $\F$, and $S$ be a weakly triangularizable subspace of $\End(V)$.
Since $\F$ is not quadratically closed, there is a monic irreducible polynomial $p$ with degree $2$ in $\F[t]$, and there exists an endomorphism of $V$ with characteristic polynomial $p$, which is then non-triangularizable.
Hence $S \neq \End(V)$ and we conclude that $\dim S \leq \dbinom{3}{2}$.

\subsection{Setting the inductive step up, and a first rough upper-bound for the dimension}\label{section:inductionweaklysetup}

Now, we let $n \geq 3$, we let $V$ be an $n$-dimensional vector space and $S$ be a weakly triangularizable subspace of $\End(V)$.
We assume of course that the statements in Theorem \ref{theo:weaklyadapted} hold for the integer $n-1$.

We start by obtaining a rough upper-bound on the dimension of $S$.

\begin{claim}\label{claim:roughbound}
One has $\dim S \leq \dbinom{n+1}{2}+(n-3)$.
\end{claim}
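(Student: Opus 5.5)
The plan is to run the inductive inequality of Section~\ref{section:adaptedvectorsinduction} with a vector $x$ that is only ``almost'' weakly adapted. For every $x \in V\setminus\{0\}$ we have
$$\dim S \leq (n-1)+\dim\bigl(S \cap (V^\star \otimes x)\bigr)+\dim \overline{S_x},$$
and $\overline{S_x}$ is a weakly triangularizable subspace of $\End(V/\F x)$, so point (b) of the induction hypothesis gives $\dim \overline{S_x} \leq \binom{n}{2}$. Since $\binom{n+1}{2}+(n-3)=\binom{n}{2}+(n-1)+(n-2)$, it suffices to find a nonzero $x$ with $\dim(S\cap(V^\star\otimes x)) \leq n-2$. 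By identity \eqref{eq:dualtransitive}, this is equivalent to finding $x$ with $\dim S^\bot x \geq 2$.

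We argue by contradiction and assume that $\dim S^\bot x \leq 1$ for every $x \in V$. Then $T:=S^\bot$ is a linear space of operators in which every vector is sent into a line. I would invoke (or quickly reprove) the classical description of such locally rank-one spaces: either there is a fixed vector $y$ with $T \subseteq V^\star \otimes y$ (all operators have range inside a common line), or $T \subseteq \F\,\id_V$.

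The reproof is the step I expect to need the most care, though it is standard. If some $u \in T$ is not scalar, pick $x$ with $x,u(x)$ independent and compare $u$ and $v$ on $x$, $x'$ and $x+x'$; this forces any two operators of $T$ to be collinear on each vector, and a short case analysis yields the dichotomy. Since $\F$ has more than $3$ elements, the usual ``generic vector'' arguments go through.

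It then remains to contradict weak triangularizability in each case, using $n \geq 3$.
\begin{itemize}
\item In the first case, $S=T^\bot \supseteq (V^\star\otimes y)^\bot$. The latter space is exactly the set of all $u \in \End(V)$ with $u(y)=0$, because $\tr(u\,(\varphi\otimes y))=\varphi(u(y))$. Choose a complement of $\F y$ containing a $2$-dimensional subspace $P$, and take $u$ vanishing on $y$ and on the rest of the complement while acting on $P$ by a companion matrix of a monic irreducible quadratic, which exists because $\F$ is not quadratically closed. This $u$ lies in $S$ and is not triangularizable.
\item In the second case, $S \supseteq (\F\,\id_V)^\bot=\mathfrak{sl}(V)$. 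Take a block-diagonal operator $C \oplus (-\tr C) \oplus 0_{n-3}$ with $C$ the same companion matrix. It has trace zero and is non-triangularizable; here $n\geq 3$ is used.
\end{itemize}
Both cases contradict the assumption on $S$, so the required $x$ exists and the rough bound follows.
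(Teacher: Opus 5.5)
Your reduction is sound and follows the paper's route. It suffices to find $x$ with $\dim(S^\bot x)\geq 2$. Your first bullet correctly produces a non-triangularizable element of $S$ when all elements of $S^\bot$ have range in a common line $\F y$; this is the paper's passage to $\End(V/D)$ in concrete form.

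\textbf{The gap.} The ``classical description'' you invoke in between is false. The condition $\dim(Tx)\leq 1$ for all $x$ is vacuous when $\dim T\leq 1$. So $T=\F u$ satisfies it for every $u\in\End(V)$, for instance $u$ invertible and non-scalar. Such a $T$ is contained neither in some $V^\star\otimes y$ nor in $\F\,\id_V$.

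The correct folklore statement is the one the paper applies to the rank-at-most-one space $\widehat{S^\bot}$. Either all $\widehat{x}$ vanish on a common hyperplane of $T$, or all elements of $T$ have range in a common line. The first alternative forces $\dim T\leq 1$, since no nonzero operator kills every vector. Your sketched reproof cannot yield ``$T\subseteq\F\,\id_V$'': when $T=\F u$ there is no second operator $v$ to compare $u$ with. As a result, the case where $S$ is a hyperplane of $\End(V)$ with $S^\bot=\F u$, $u$ non-scalar of rank at least $2$, is covered by neither of your bullets.

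\textbf{The fix.} This missing case is exactly why the paper first proves $\dim S^\bot\geq 2$. The induction gives $\dim S\leq\binom{n+1}{2}+(n-1)<n^2$, so some $x$ satisfies $V^\star\otimes x\not\subseteq S$. Hence $\dim S\leq\binom{n+1}{2}+(n-2)\leq n^2-2$ for $n\geq 3$.

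In your setup the repair is just as short. Suppose $S^\bot=\F u$ with $u\neq 0$, and pick $x$ with $u(x)\neq 0$. Then $\dim(S\cap(V^\star\otimes x))=n-1$ by \eqref{eq:dualtransitive}. Your inductive inequality gives $n^2-1=\dim S\leq\binom{n+1}{2}+(n-2)$, i.e.\ $(n-1)(n-2)\leq 0$, which is impossible for $n\geq 3$.

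Once $\dim S^\bot\geq 2$ is known (the case $S^\bot=\{0\}$ is already handled by your first bullet), the correct dichotomy leaves only the common-range case, and your first bullet finishes the proof. Your second bullet then becomes unnecessary.
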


\begin{proof}
Let first $x \in V \setminus \{0\}$ be arbitrary.
We already find
$$\dim S \leq (n-1)+\dim ((V^\star \otimes x) \cap S)+\dim \overline{S_x},$$
and by induction $\dim \overline{S_x} \leq \dbinom{n}{2}$.
This already yields
$$\dim S \leq \binom{n+1}{2}+(n-1),$$
and in particular $S \neq \End(V)$. It follows that we can choose $x$ so that $V^\star \otimes x \not\subseteq S$
(otherwise by summing all the possibilities when $x$ ranges over $V \setminus \{0\}$, we would recover $S=\End(V)$).
Hence we get the improved inequality $\dim S \leq \binom{n+1}{2}+(n-2)$, and in particular $\dim S \leq n^2-2$,
to the effect that $\dim S^\bot \geq 2$.

We must now improve the upper-bound on $\dim S$ by one extra notch.
To do this, we assume on the contrary that $\dim S = \binom{n+1}{2}+(n-2)$
and examine the consequences of that. So, for every $x \in V \setminus \{0\}$
we learn that $\dim (S \cap (V^\star \otimes x)) \geq n-1$.

By \eqref{eq:dualtransitive} this yields $\dim (S^\bot x) \leq 1$ for all $x \in V \setminus \{0\}$.
Naturally, this invites us to introduce the vector space $\widehat{S^\bot}$ of all the linear mapping
$$\widehat{x} : v \in S^\bot \mapsto v(x),$$
with $x \in V$, and we translate the recent result as saying that the nonzero elements of $\widehat{S^\bot}$ have rank $1$.

Then it is folklore that (at least) one of the following two properties holds:
\begin{enumerate}[(i)]
\item There exists a linear hyperplane $H$ of $S^\bot$ on which all the operators $\widehat{x}$ vanish.
\item There exists a $1$-dimensional linear subspace $D$ of $V$ into which all the operators $\widehat{x}$ map.
\end{enumerate}
Yet as $\dim S^\bot>1$ the first outcome would allow us to find an element $v$ in $H \setminus \{0\}$, and then we would have the absurd
property that $v(x)=\widehat{x}(v)=0$ for all $x \in V$.
Hence there is  a $1$-dimensional linear subspace $D$ of $V$ into which all the operators $\widehat{x}$ map, which means that
$\im v \subseteq D$ for all $v \in S^\bot$.
By double-duality, this shows that $S$ contains all the operators $u \in \End(V)$ that vanish on $D$.
By considering the induced endomorphisms of $V/D$, we would obtain that $\End(V/D)$ is a weakly triangularizable subspace of itself,
and this is absurd because $\dim(V/D) \geq 2$ and $\F$ is not quadratically closed (we could also note that this contradicts the inductive assumption).

This contradiction completes the proof.
\end{proof}

Note that this already yields point (b) of Theorem \ref{theo:weaklyadapted} in the special case $n=3$.

\subsection{Completing the case $n=3$}

Here, we complete the proof of point (a) of Theorem \ref{theo:weaklyadapted} for $n=3$.

So, we assume that $n=3$, we let $(V_i)_{1 \leq i \leq 6}$ be a $3$-complex of $V$, and
we assume that every vector of $V$ that is weakly $S$-adapted belongs to $\underset{i=1}{\overset{6}{\bigcup}} V_i$.

We will consider first the case where $|\F|>4$, and will have to use a different argument in the special case $|\F|=4$.
So, assume first that $|\F|>4$.
As in the previous section of proof, we consider the trace orthogonal space $S^\bot$: we shall prove that every nonzero element of the dual operator space
$\widehat{S^\bot}$ has rank $1$.
To this end, we choose respective basis $(u_1,\dots,u_N)$ and $(e_1,e_2,e_3)$ of $S^\bot$ and $V$, and for
$x \in V$ we denote by $M(x)$ the matrix that represents $\widehat{x}$ in such bases.
Note that $\rk M(x) \leq 1$ for every vector $x$ of $V$ that is not weakly $S$-adapted,
and in particular for every vector of $V \setminus \underset{i=1}{\overset{6}{\bigcup}} V_i$.

Now, let $I \subseteq \lcro 1,3\rcro$ and $J \subseteq \lcro 1,N\rcro$ have cardinality $2$. Consider the function $p_{I,J}$
that maps every $x \in V$ to the minor determinant of $M(x)$ with row indices in $I$ and column indices in $J$.
Then $p_{I,J}$ is a $2$-homogeneous polynomial function, and it vanishes outside of $\underset{i=1}{\overset{6}{\bigcup}} V_i$.
Since $|\F|>4$, the Vanishing Lemma for Homogeneous Polynomials (applied to $m=5$ and $d=2$) yields $p_{I,J}=0$.
Varying $I$ and $J$ then yields that $\rk \widehat{x} \leq 1$ for all $x \in S^\bot$.
Then, we proceed exactly as in the proof of Claim \ref{claim:roughbound} to find a contradiction.

Assume finally that $|\F|=4$. Unfortunately, the previous argument fails, and we must resort to a different one.
We can at least use the Covering Lemma to find a basis $(e_1,e_2,e_3)$ of $V$ made of vectors that are not weakly $S$-adapted
(indeed, otherwise $V \setminus \underset{i=1}{\overset{6}{\bigcup}} V_i$ would be included in a linear hyperplane $V_7$ of $V$,
and the extended list $(V_i)_{1 \leq i \leq 7}$ would then contradict the Covering Lemma).
This yields three $2$-dimensional linear subspaces $W_1,W_2,W_3$ of $V^\star$ such that $W_i \otimes e_i \subseteq S$ for all $i \in \{1,2,3\}$.
We note that the $W_i \otimes e_i$ subspaces are linearly independent, and we deduce that $\dim S \geq 6$. By Theorem \ref{theo:classical},
$S$ is optimal, and since $|\F| \geq 3$ we can use Theorem \ref{theo:grandscardinaux}
to find the structure of $S$: it is represented in some basis by either one of $\Mat_1(\F) \vee \Mat_1(\F)\vee \Mat_1(\F) =\Matt_3(\F)$,
$\mathfrak{sl}_2(\F) \vee \Mat_1(\F)$ or $\Mat_1(\F) \vee \mathfrak{sl}_2(\F)$.
In each case we claim that there exists a linear hyperplane $H$ of $V$ that contains all the non-$S$-adapted vectors,
and this will yield that $(V_1,\dots,V_6,H)$ covers $V$, thereby contradicting the Covering Lemma.

Consider first the case where $S$ is represented by $\Matt_3(\F)$ or
$\mathfrak{sl}_2(\F) \vee \Mat_1(\F)$ in some basis.
Then we have a linear hyperplane $H$ of $V$
that is $S$-invariant. Let $x \in V \setminus H$. Let $\varphi \in V^\star$
be such that $\varphi \otimes x \in S$. For all $y \in H$ we must have $\varphi(y) x \in H$ and hence $\varphi(y)=0$.
It follows that $\varphi=0$ or $\Ker \varphi=H$. Hence $V^\star \otimes x$ is included in $H^\circ \otimes x$, which has dimension $1$.
Therefore $x$ is weakly $S$-adapted. As a consequence, $H$ contains all the vectors of $V$ that are not weakly-$S$-adapted.

Consider finally the case where $S$ is represented by $\Mat_1(\F) \vee \mathfrak{sl}_2(\F)$ in some basis.
Then we have a $1$-dimensional linear subspace $D$ of $V$ that is $S$-invariant and such that, for each $u \in S$, the induced
endomorphism $\overline{u}$ of $V/D$ has trace $0$.
Let $x \in V \setminus D$. Let $\varphi \in V^\star$ be such that $\varphi \otimes x \in S$.
Again, we find that $\varphi$ vanishes on $D$. It follows that $\tr(\varphi \otimes x)=\tr(\overline{\varphi \otimes x})=0$, and hence $\varphi(x)=0$.
Therefore $S \cap (V^\star \otimes x)$ is included in the $1$-dimensional space $(D \oplus \F x)^\circ \otimes x$,
and we conclude that $x$ is weakly $S$-adapted. It follows that $D$ contains all the vectors of $V$ that are not weakly-$S$-adapted.

Hence, in any case we have found that the weakly $S$-adapted vectors are contained in some linear hyperplane of $V$, and we conclude to a contradiction,
as explained earlier. This finishes the proof of point (a) of Theorem \ref{theo:weaklyadapted} in the special case $n=3$.

\subsection{Completing the inductive step}\label{section:inductiveweaklyadapted}

Here, we assume that $n \geq 4$, and we shall prove point (a) thanks to the induction hypothesis.

We perform a \emph{reductio ad absurdum}:
we take a $3$-complex $(V_1,\dots,V_{2n})$ of $V$, and we assume that every weakly $S$-adapted vector
belongs to the union $V_1 \cup \cdots \cup V_{2n}$.

To start with, since $|\F| \geq 4$ we get from the Covering Lemma (applied with $p=3$) that no linear hyperplane $H$ of $V^\star$ satisfies $H \cup \underset{i=1}{\overset{2n}{\bigcup}} V_i^o=V^\star$.
This shows that there exists a basis $(f_1,\dots,f_n)$ of $V^\star$ in which all the vectors are outside of $\underset{i=1}{\overset{2n}{\bigcup}} V_i^o$.
In other words, no $f_i$ vanishes on some $V_k$.

Now, we fix an $i \in \lcro 1,n\rcro$ and consider the linear hyperplane $H:=\Ker f_i$.
We consider the space $S \cap \Hom(U,H)$ and the induced subspace
$$\calT:=\{u_H \mid u \in S \cap \Hom(U,H)\} \subseteq \End(H).$$
Then $\calT$ is a weakly triangularizable linear subspace of $\End(H)$.

Moreover the intersection $(S \cap \Hom(U,H)) \cap (f_i \otimes V)$ equals
$f_i \otimes W_i$ for some linear subspace $W_i$ of $H$.

We shall observe that $\dim W_i>\lfloor \frac{2n}{3}\rfloor$. Assume on the contrary that
$\dim W_i \leq \lfloor \frac{2n}{3}\rfloor=1+\lfloor \frac{2n-3}{3}\rfloor$,
and let us embed $W_i$ inside a linear subspace $G$ of $H$ with dimension $1+\lfloor \frac{2n-3}{3}\rfloor$
(this is possible because obviously $\left\lfloor \frac{2n}{3}\right\rfloor \leq n-1$).
Since none of the $V_j$'s is included in the kernel of $f$, we have
$\dim (V_j \cap H)=\dim(V_j)-1$ for all $j \in \lcro 1,2n\rcro$, and it follows that
$(V_4 \cap H,\dots,V_{2n} \cap H,G)$ is a $3$-complex of $H$, whereas $V_j \cap H=\{0\}$ for all $j \in \{1,2,3\}$.

By induction, there exists a vector $x \in H \setminus \{0\}$ that is weakly $\calT$-adapted but belongs to none of
$V_4 \cap H,\dots,V_{2n} \cap H,G$. We deduce that $x$ does not belong to $V_1 \cup \cdots \cup V_{2n}$
(remember that $V_i \cap H=\{0\}$ for all $i \in \{1,2,3\}$), and hence
$x$ is not weakly $S$-adapted. This yields a $2$-dimensional linear subspace $P$ of $V^\star$ such that
$P \otimes x \subseteq S$. Note that $P \otimes x \subseteq \End(V,H)$, to the effect that we can consider the linear mapping
$$\Phi : \varphi \in P \mapsto (\varphi \otimes x)_{H} \in \End(H).$$
Since $x$ is weakly $\calT$-adapted the range of this map has dimension at most $1$, and hence $\Ker \Phi \neq \{0\}$.
This yields $\varphi\in P \setminus \{0\}$ such that $(\varphi \otimes x)_{|H}=0$, to the effect that $\varphi_{|H}=0$.
Hence $\varphi=\lambda f_i$ for some $\lambda \in \F^\times$, and we deduce that $x \in W_i$.
Yet this contradicts the assumption that $x \not\in G$.

It follows that
$$\dim W_i \geq 1+\left\lfloor \frac{2n}{3}\right\rfloor.$$
Now, since $f_1,\dots,f_n$ are linearly independent the subspaces $f_i \otimes W_i$ are linearly independent in $\End(V)$.

Finally, we need the key observation that $\id_V$ does not belong to the sum $(f_1 \otimes W_1) \oplus \cdots \oplus (f_n \otimes W_n)$.
To prove this, assume the contrary and write $\id_V=\sum_{k=1}^n f_k \otimes x_k$ for some $x_1 \in W_1,\dots,x_n \in W_n$.
We take the predual basis $(e_1,\dots,e_n)$ of $(f_1,\dots,f_n)$, and deduce by evaluating at $e_1$ that 
$e_1=x_1$. Yet $f_1(e_1)=1$ whereas $x_1 \in \Ker f_1$.

We conclude that $\F \id_V,f_1 \otimes W_1,\dots,f_n \otimes W_n$ are linearly independent.
Hence their sum $S'$ satisfies
$$\dim S' \geq 1+\sum_{i=1}^n \dim(f_i\otimes W_i) \geq 1+n+n \left\lfloor \frac{2n}{3}\right\rfloor.$$
Besides, $S' \subseteq \F \id_V+S$, and hence $S'$ is weakly triangularizable.
By Claim \ref{claim:roughbound} applied to $S'$, we find
$$\dim S' \leq \dbinom{n+1}{2}+(n-3).$$
Hence, a contradiction will be found as soon as we establish that
$$n \left\lfloor \frac{2n}{3}\right\rfloor \geq \frac{n(n+1)}{2}-3,$$
and to have this inequality it suffices that
$$n \frac{2n-2}{3} \geq \frac{n(n+1)}{2}-3,$$
which is equivalent to $n(n-7) \geq -18$.
Yet, the least value of $s \mapsto s(s-7)$ when $s$ ranges in $\R$ is $-(7/2)^2$, which is obviously greater than $-18$.

Hence a final contradiction is found. We conclude that the latest assumption was wrong, to the effect that
point (a) in Theorem \ref{theo:weaklyadapted} is satisfied by $S$.
In particular, this obviously yields that at least one vector of $V$ is weakly-$S$-adapted,
and finally we use the inductive technique described in Section \ref{section:adaptedvectorsinduction} to conclude that
$\dim S \leq \dbinom{n+1}{2}$.

This completes the inductive proof of Theorem \ref{theo:weaklyadapted}. Theorem \ref{theo:majodim} immediately follows from it.

\section{Adapted vectors and hurdles}

\subsection{The main obstruction for the existence of an adapted vector}

Unfortunately, Theorem \ref{theo:weaklyadapted} is insufficient in the prospect of analyzing of the weakly triangularizable subspaces with greatest possible dimension:
experience shows that what we really need is an adapted vector if we want to decipher such spaces.

However, adapted vectors do not exist in general for weakly triangularizable subspaces when $\F$
is perfect with characteristic $2$. A basic counterexample is the space $\{0_{n-2}\} \vee \mathfrak{sl}_2(\F)$.
Indeed, it is easy to check that for every $x \in \F^n \setminus \{0\}$
such a subspace contains a trace zero matrix with range $\F x$.
Moreover,  $\{0_{n-2}\} \vee \mathfrak{sl}_2(\F)$ is weakly triangularizable because every matrix of $\mathfrak{sl}_2(\F)$
is weakly triangularizable here: indeed, the characteristic polynomial of any matrix of $\mathfrak{sl}_2(\F)$
takes the form $t^2+\alpha$ for some $\alpha \in \F$,
and hence splits because $\F$ is perfect with characteristic $2$.

This counterexample creates enormous difficulties to prove the existence of adapted vectors, as we shall see.

\subsection{Hurdles and the main theorem on adapted vectors}\label{section:hurdles}

\begin{Def}
Assume that $\dim V \geq 2$. A linear subspace $S$ of $\End(V)$ is called
a \textbf{hurdle} when there exists a $2$-dimensional subspace $P$ of $V^\star$ such that $S$
contains all the operators of the form $\varphi \otimes x$ with $x \in V$ and $\varphi \in P$ such that $\varphi(x)=0$.
\end{Def}

Equivalently, $S$ is a hurdle if and only if there exists a basis $\bfB$ of $V$ such that $S$ contains all the operators that are represented in the basis $\bfB$ by a matrix of the form
$$\begin{bmatrix}
[0]_{(n-2) \times (n-2)} & [?]_{(n-2) \times 2} \\
[0]_{2 \times (n-2)} & N
\end{bmatrix} \quad \text{with $N \in \mathfrak{sl}_2(\F)$.}$$

The following notion will also be quite useful:

\begin{Def}
A linear subspace $S$ of $\End(V)$ is called a \textbf{wall} when there exists a linear hyperplane $H$ of $V$ such that $S$ contains all the endomorphisms of $V$ with kernel $H$.
Then we say that $H$ is associated with $S$.
\end{Def}

We can now state the main result of the second part of the article:

\begin{theo}\label{theo:adaptedvectors}
Let $\F$ be a perfect field with characteristic $2$ that is not quadratically closed, and with $|\F| > 2$.
Let $V$ be a vector space over $\F$ with finite dimension $n \geq 3$.
Let $S$ be a weakly triangularizable subspace of $\End(V)$. Then either $S$ is a hurdle or it has an adapted vector.
\end{theo}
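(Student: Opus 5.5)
We argue by induction on $n$, mimicking the scheme used for Theorem \ref{theo:weaklyadapted} and the analogous result in \cite{dSPfeweigenvalues2}. As there, we prove the stronger statement in which adapted vectors are sought outside a prescribed list of subspaces. Concretely, if $S$ is not a hurdle, then the union of a suitable $k$-complex of $V$ (probably a $2$-complex, adjusted to the room left by $|\F|\geq 4$ in the Covering Lemma) never contains all the $S$-adapted vectors. We assume $S$ is not a hurdle and that every $S$-adapted vector lies in $V_1\cup\dots\cup V_N$ for such a complex, and seek a contradiction.

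\textbf{Local step.} Choose, via the Covering Lemma applied in $V^\star$, a linear form $f$ vanishing on none of the $V_j$, and set $H:=\Ker f$. Form $\calT\subseteq \End(H)$ from $S\cap\Hom(V,H)$ exactly as in Section \ref{section:inductiveweaklyadapted}; then $\calT$ is weakly triangularizable and the traces $V_j\cap H$ form a complex of $H$ shifted by one. If $\calT$ is not a hurdle, induction provides a $\calT$-adapted $x\in H$ outside all $V_j$. Hence $x$ is not $S$-adapted, so there is $\varphi\neq 0$ with $\varphi(x)=0$ and $\varphi\otimes x\in S$. Restricting to $H$ forces $\varphi_{|H}=0$, i.e.\ $\varphi\in\F f$; since $f(x)=0$ this gives $f\otimes x\in S$. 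Collecting such $x$, we obtain large subspaces $W$ of $H$ with $f\otimes W\subseteq S$, as in the weak case. Doing this for a basis $(f_1,\dots,f_n)$ yields many independent rank-one elements of $S$. The dimension count against Theorem \ref{theo:weaklyadapted}(b), applied to $S+\F\id_V$, then gives a contradiction as before, provided the induction applies for enough $f_i$. The base case $n=3$ is handled by a direct argument using Theorem \ref{theo:grandscardinaux} for $\F_4$ (as in the proof above) and the Vanishing Lemma for $|\F|>4$.

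\textbf{Main obstacle: the hurdle case.} The hard part is when $\calT$ is a hurdle for many hyperplanes $H=\Ker f$. Then $S$ contains, modulo operators into $H$, a copy of $\{0\}\vee\mathfrak{sl}_2$ attached to a $2$-dimensional $P_f\subseteq H^\star$. I would first show that $P_f$ lifts to a $2$-plane of $V^\star$. Next, comparing two or three such hyperplanes (here $|\F|>2$ gives enough choices), I would show the planes $P_f$ are essentially independent of $f$. Finally I would glue the data: the operators $\varphi\otimes x$ with $\varphi\in P$, $\varphi(x)=0$ obtained inside the various $H$ span all of them in $V$, which makes $S$ itself a hurdle and contradicts the assumption. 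If gluing fails, the alternative is to show that $S$ is a wall, or contains $\{0\}\vee\mathfrak{sl}_2$ of too large a dimension. Combined with the hurdle structure, this would produce a non-triangularizable element built from an irreducible quadratic $t^2+t+\alpha$, which exists because $\F$ is not quadratically closed. I expect this case analysis, tracking how the planes $P_f$ vary with $f$, to be the bulk and the delicate part of the proof.
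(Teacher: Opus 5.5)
\textbf{There is a genuine gap: the hurdle case, which is the core of the theorem, is only planned, and the plan aims at the wrong kind of contradiction. The base case also does not transfer from the weak setting.}

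Your overall frame matches the paper's. You strengthen the statement to Theorem \ref{theo:adaptedvectorsrefined}, and your local step is the paper's starting argument. The count against Theorem \ref{theo:majodim} for $S+\F\id_V$ forces some $W_1=\{x\in\Ker f_1 : f_1\otimes x\in S\}$ to have dimension at most $\lfloor n/2\rfloor$. Induction, applied to the traces $V_j\cap H$ completed by a subspace $G\supseteq W_1$, then shows that $\calT$ is a hurdle. But this is only where the proof begins.

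A hurdle structure on $\calT$ only tells you that, for $\psi\in P_f$ and $x\in H$ with $\psi(x)=0$, $S$ contains some $\tilde\psi\otimes x+f\otimes w$ with an uncontrolled $w\in H$. ``Lifting $P_f$'' and ``showing $P_f$ is independent of $f$'' are exactly the problem of controlling these correction terms, and nothing in your proposal does that.

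More seriously, your fallback (if gluing fails, produce a non-triangularizable element) cannot work in general. The configurations that actually arise can sit inside weakly triangularizable spaces that are not hurdles. Take $G'$ of codimension $2$ with $G'\not\subseteq H$. The space of all trace-zero operators vanishing on $G'$ and mapping into $H$ is weakly triangularizable: each element has characteristic polynomial $t^{n-2}(t^2+c)$, and $c$ is a square since $\F$ is perfect. Yet it is not a hurdle, since a hurdle would require $\Ker\varphi\subseteq H$ for every nonzero $\varphi$ in a $2$-dimensional space. So the hypothesis that the adapted vectors lie in $V_1\cup\cdots\cup V_n$ must be used a second time at the end.

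The missing idea is \emph{confinement}. The paper uses the Invariant Subspace Lemma (from the Special Erasure Lemma) and a long analysis starting from the single hyperplane $H=\Ker f_1$. This reaches either Case 1, which is exactly the configuration just described sitting inside $S$, or the special Case 2. Then the Second and Third Confinement Lemmas, built on Lemmas \ref{lemma:polylemma} and \ref{lemma:3by3}, show that unless $S$ is a hurdle, all non-adapted vectors lie in $G'\cup H\cup H'$ (Case 1) or in $H_1\cup H_2$ (Case 2). Together with the complex, this covers $V$ by too few proper subspaces, contradicting the Covering Lemma.

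Your base case also does not carry over from the weak setting. A non-adapted $x$ only gives $S\cap(V^\star\otimes x)\neq\{0\}$, so $\dim(S^\bot x)\le n-1$ rather than $\le 1$. Over $\F_4$, a basis of non-adapted vectors yields only $\dim S\ge 3$, not optimality, so Theorem \ref{theo:grandscardinaux} cannot be invoked as in the weak case.

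The paper proceeds differently:
\begin{enumerate}[(1)]
\item It uses the Vanishing Lemma with $m=4$, $d=n$ to show that $S^\bot$ is intransitive.
\item It uses Atkinson and Lloyd's theorem together with Theorem \ref{theo:grandscardinaux} to show that $S$ is a wall or a hurdle.
\item It rules out walls with the Erasure Lemma (First Confinement Lemma) plus the Covering Lemma.
\end{enumerate}
It does this for both $n=3$ and $n=4$, because its inductive step needs $n\ge 5$.
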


In fact, we will prove a more precise result on the existence of adapted vectors, which is essentially the analogue of Theorem \ref{theo:weaklyadapted}
for adapted vectors.

\begin{theo}\label{theo:adaptedvectorsrefined}
Let $\F$ be a perfect field with characteristic $2$ that is not quadratically closed, and with $|\F| > 2$.
Let $V$ be a vector space over $\F$ with finite dimension $n \geq 3$.
Let $S$ be a weakly triangularizable subspace of $\End(V)$. Then either $S$ is a hurdle or
the union of a $2$-complex of $V$ cannot contain all the $S$-adapted vectors.
\end{theo}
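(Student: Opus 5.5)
I plan to argue by induction on $n$, mirroring the structure of the proof of Theorem \ref{theo:weaklyadapted}. Both the base case $n=3$ and the inductive step will be handled by \emph{reductio ad absurdum}. We fix a $2$-complex $(V_1,\dots,V_{n})$ of $V$ and assume that $S$ is not a hurdle while every $S$-adapted vector lies in $V_1\cup\cdots\cup V_n$. The point is to use the rich supply of non-adapted vectors outside the union to force large subspaces of $S$, and then to show that these subspaces either contradict weak triangularizability or exhibit the hurdle structure. By Theorem \ref{theo:majodim} we may also assume $\dim S\leq \binom{n+1}{2}$ throughout, which gives dimension counting for contradictions.

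For the inductive step ($n\geq 4$), I apply the Covering Lemma with $p=2$ in the dual, which is allowed since $|\F|\geq 4>2$. This yields a basis $(f_1,\dots,f_n)$ of $V^\star$ such that no $f_i$ vanishes on any $V_k$. For each $i$, set $H=\Ker f_i$ and form the induced space $\calT\subseteq\End(H)$. The traces $(V_3\cap H,\dots,V_n\cap H)$, completed by a suitable $G$, form a $2$-complex of $H$. Two cases then arise.
\begin{itemize}
\item If $\calT$ is not a hurdle, the induction hypothesis gives a $\calT$-adapted $x\in H$ outside that complex. Such an $x$ is non-$S$-adapted, so $S$ contains some $\varphi\otimes x$ with $\varphi(x)=0$. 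Since $x$ is $\calT$-adapted, $\varphi|_H$ must vanish, so $\varphi\in\F f_i$. Hence $f_i\otimes W_i\subseteq S$ with $W_i\subseteq H$ of dimension exceeding roughly $n/2$.
\item If $\calT$ is a hurdle, I expect to lift the pair $(P,\text{basis})$ witnessing this, using the Second Erasure Lemma philosophy, to show either that $S$ itself is a hurdle or again that $W_i$ is large.
\end{itemize}
Summing the $f_i\otimes W_i$ together with $\F\id_V$, exactly as in Section \ref{section:inductiveweaklyadapted}, then gives a lower bound of order $n^2/2+n$ on the dimension of the weakly triangularizable space $S'=\F\id_V+S$. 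This should contradict $\dim S'\leq\binom{n+1}{2}$ for large $n$. For the small values of $n$ where the counting is not strict enough, I would refine it: use that $\id_V$ and the independent $f_i\otimes W_i$ still miss the trace-zero rank-one directions, and invoke Theorem \ref{theo:grandscardinaux} when $S'$ turns out to be optimal and $|\F|\geq n$.

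For the base case $n=3$, the non-adapted vectors outside the union of three subspaces (two lines and a plane) span $V$ by the Covering Lemma. This produces a basis $(e_1,e_2,e_3)$ with nonzero $\varphi_i\otimes e_i\in S$, $\varphi_i(e_i)=0$. I would combine these with the bound $\dim S\leq 6$ and a case analysis. When $\dim S=6$ and $|\F|\geq 3$, Theorem \ref{theo:grandscardinaux} reduces to the three joint forms. For $\Matt_3(\F)$ and $\mathfrak{sl}_2(\F)\vee\Mat_1(\F)$, the adapted vectors are exactly those outside an invariant plane or line, which contradicts coverage. For $\Mat_1(\F)\vee\mathfrak{sl}_2(\F)$, the space is a hurdle. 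When $\dim S<6$, I would try to embed $S$ into an optimal space, or argue directly with trace-orthogonality and \eqref{eq:dualtransitive}, together with the Vanishing Lemma, applied to the $2$-homogeneous minors of the matrix of $\widehat{x}$ restricted to $x^\bot$.

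The main obstacle is the hurdle branch of the inductive step: $\calT$ being a hurdle does not immediately make $S$ a hurdle. The delicate part is showing that the hurdle planes $P_i\subseteq H_i^\star$ obtained for different $i$ glue into a single $2$-dimensional $P\subseteq V^\star$, or else yield enough extra rank-one operators to feed the dimension count. I would first try to show that when several $\calT_i$ are hurdles, their associated $2$-dimensional quotients are compatible. This would use the weak triangularizability of combinations of the explicit $\mathfrak{sl}_2$-blocks with the $f_i\otimes W_i$, exploiting that $|\F|>2$ to choose scalars avoiding the splitting of the relevant quadratic characteristic polynomials.
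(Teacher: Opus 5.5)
Your outer frame (reductio, the dual Covering Lemma, the spaces $W_i$ and $\calT$, the count with $\F\id_V$) is the paper's. However, the proposal has a genuine gap exactly where you locate the main obstacle, and that obstacle is the heart of the proof.

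\textbf{The hurdle branch is unavoidable.} The count needs no refinement for small $n$: if $\dim W_i>\lfloor n/2\rfloor$ for every $i$, then $\dim(S+\F\id_V)\geq 1+n+n\lfloor n/2\rfloor\geq 1+\binom{n+1}{2}$ for \emph{every} $n$, contradicting Theorem~\ref{theo:majodim}. So some index, say $i=1$, always has $\dim W_1\leq\lfloor n/2\rfloor$, and the induction hypothesis then \emph{forces} $\calT_1$ to be a hurdle.

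\textbf{Your hoped-for dichotomy is false.} The statement ``$S$ is a hurdle or $W_i$ is large'' fails as a structural statement. Take $V=H\oplus\F e_n$ and let $S$ consist of the $u$ with $u(e_n)=0$, $u(H)\subseteq H$ and $u_{|H}$ in a weakly triangularizable hurdle of $\End(H)$. Then $\calT$ is a hurdle, $S$ is not one (all its elements map into $H$), and $W=\{0\}$. So the $2$-complex hypothesis must be used a second time in that branch, and your plan gives no mechanism for this. Gluing planes $P_i$ over several indices is beside the point, since one index suffices and may be the only one.

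\textbf{The paper's mechanism.}
\begin{enumerate}[(1)]
\item The Invariant Subspace Lemma for hurdles (a consequence of the Special Erasure Lemma) shows that $G={}^\circ P$ is invariant and that $S$ induces trace-zero endomorphisms of $H/G$.
\item An analysis valid only for $n\geq5$ then produces one of two explicit configurations.
\item The Second and Third Confinement Lemmas show that, unless $S$ is a hurdle, all non-$S$-adapted vectors lie in $G'\cup H\cup H'$ or in $H_1\cup H_2$. Together with $V_1,\dots,V_n$, this contradicts the Covering Lemma.
\end{enumerate}
These confinement lemmas rest on the characteristic-$2$ computations of Lemmas~\ref{lemma:polylemma} and~\ref{lemma:3by3}: non-split cubics $t^3+pt+q$, and the constraint on $3$-by-$3$ matrices $A$ such that $A+(N\oplus 0_1)$ is triangularizable for all $N\in\mathfrak{sl}_2(\F)$. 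None of this appears in your plan.

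\textbf{Your base case is also incomplete.} For $n=3$ you only handle $\dim S=6$. For $\dim S<6$, embedding $S$ into an optimal space gives nothing in general: if that space is $\Mat_1(\F)\vee\mathfrak{sl}_2(\F)$, it has no adapted vectors at all, so nothing transfers to $S$. Your Vanishing-Lemma suggestion is also not pinned down.

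\textbf{The paper's base-case device needs no optimality of $S$.} A non-$S$-adapted $x$ satisfies $\dim S^\bot x\leq n-1$ by \eqref{eq:dualtransitive}. So the Vanishing Lemma with $m=4$ and $d=n$, applied to the $n\times n$ minors of $\widehat{x}$, makes $S^\bot$ intransitive. Atkinson and Lloyd's theorem, applied to $S^\bot$ or to its image in a quotient $V/W$ where it becomes primitively intransitive, then shows one of two things: either $S$ is optimal, or $S$ contains $\calN_W$ and induces an optimal space of $\End(V/W)$. In both cases Theorem~\ref{theo:grandscardinaux} makes $S$ a wall or a hurdle, and walls are excluded by the First Confinement Lemma and the Covering Lemma.

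The paper runs this argument for $n=4$ as well, because the inductive analysis needs $n\geq5$. An induction whose inductive step starts at $n=4$, as in your plan, would have to face that too.
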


\subsection{Strategy of proof}

The strategy of proof of Theorem \ref{theo:adaptedvectorsrefined}
is largely similar to the one of Theorem \ref{theo:weaklyadapted},
but it is far more difficult. The main difficulties are located in the initialization at $n \leq 4$
and in the possibility of encountering hurdles when trying to apply the induction hypothesis.
To deal with the latter problem, we will require technical results which we call the Confinement Lemmas.
Roughly speaking, these lemmas state that if a weakly triangularizable subspace $S$ contains a very large structure which is close to a hurdle but $S$ is not a hurdle,
then the vectors that are not $S$-adapted are confined in the union of a small number of proper linear subspaces. This can be used to prove
that the union of a $2$-complex of $V$ cannot contain all the $S$-adapted vectors.
With the exception of the First Confinement Lemma, which is an easy consequence of a result featured in \cite{dSPtriangularizable}
and which is called the Erasure Lemma there, the confinement lemmas are the most technical part of the article, and we relegate their proofs to the last section. Apart from the Second Confinement Lemma, the details of proof
are substantially different from the ones of the main proof of \cite{dSPfeweigenvalues2}, and the adaptation is nontrivial.

The case $n \geq 4$ will be considerably helped by the validity of Theorem \ref{theo:grandscardinaux},
and will require an appeal to Atkinson and Lloyd's main theorem of primitive spaces of bounded rank matrices, translated as a theorem on intransitive operator spaces
(see \cite{dSPsemilin}).

\section{Existence of adapted vectors: The case $n \in \{3,4\}$}\label{section:n=34}

Throughout the section, we assume that $\F$ is a perfect field with characteristic $2$ that is not quadratically closed, and with $|\F|>2$.

\subsection{First Confinement Lemma}

The following result will be useful to trap the non-$S$-adapted vectors in specific situations:

\begin{lemma}[First Confinement Lemma]
Let $V$ be a vector space with dimension $n \geq 2$, and let $S$ be a weakly triangularizable linear subspace of $\End(V)$.
Assume that $S$ is a wall, with associated hyperplane $H$. Then all the non-$S$-adapted vectors belong to $H$.
\end{lemma}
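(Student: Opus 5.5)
The plan is to argue by contradiction. Assuming a non-$S$-adapted vector outside $H$, we will exhibit an element of $S$ whose characteristic polynomial has an irreducible quadratic factor, which is impossible since $S$ is weakly triangularizable. The only input is that $S$ contains $H^o\otimes V$ (the wall property), plus the fact that $\F$ is not quadratically closed. Perfectness and characteristic $2$ should not be needed.

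Let $x\in V\setminus H$ and suppose $x$ is not $S$-adapted. Then there is a nonzero $\varphi\in x^\bot$ with $\varphi\otimes x\in S$. Choose the linear form $h$ with kernel $H$ normalized by $h(x)=1$. Since $S$ is a wall with associated hyperplane $H$, we get $h\otimes y\in S$ for every $y\in V$. The key observation is that $h$ and $\varphi$ are linearly independent: we have $\varphi(x)=0\neq h(x)$ and $\varphi\neq 0$. Hence the map $y\mapsto (h(y),\varphi(y))$ is onto $\F^2$.

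Now fix a monic irreducible polynomial $t^2-at-b\in\F[t]$, which exists because $\F$ is not quadratically closed; irreducibility forces $b\neq 0$. Pick $y\in V$ with $h(y)=a$ and $\varphi(y)=b$. Since $\varphi(y)=b\neq 0=\varphi(\lambda x)$ for every scalar $\lambda$, the vector $y$ does not lie in $\F x$, so $(x,y)$ is linearly independent. Consider
$$u:=\varphi\otimes x+h\otimes y\in S.$$
Its range lies in $P:=\Vect(x,y)$, and $u(x)=y$, $u(y)=bx+ay$. So $P$ is $u$-invariant, the matrix of $u_{|P}$ in $(x,y)$ is $\begin{bmatrix}0&b\\1&a\end{bmatrix}$, and its characteristic polynomial is $t^2-at-b$. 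The endomorphism induced by $u$ on $V/P$ is zero because $\im u\subseteq P$. Therefore the characteristic polynomial of $u$ is $t^{n-2}(t^2-at-b)$, which does not split over $\F$. This contradicts the weak triangularizability of $S$.

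Hence every vector outside $H$ is $S$-adapted, i.e.\ all non-$S$-adapted vectors lie in $H$. The only point needing care is guaranteeing $y\notin\F x$ so that $P$ is really $2$-dimensional, and this is handled by the observation $b\neq 0$. The rest is routine.
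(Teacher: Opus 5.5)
Your proof is correct, but it takes a different route from the paper.

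\textbf{The paper's route.} It first invokes the Erasure Lemma (imported from earlier work), which says the associated hyperplane $H$ is invariant under every element of $S$. The confinement then follows in two lines. If $x\notin H$ and $f\otimes x\in S$ with $f(x)=0$, invariance forces $f(y)x\in H$, hence $f(y)=0$, for all $y\in H$. So $H\subseteq\Ker f$, and $f\neq 0$ would give $x\in\Ker f=H$.

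\textbf{Your route.} You bypass the Erasure Lemma and build the obstruction directly. You combine the single tensor $\varphi\otimes x$ with one wall element $h\otimes y$, choosing $y$ so that the sum restricts on $\Vect(x,y)$ to the companion matrix of a prescribed irreducible quadratic. Since $\Vect(x,y)$ is invariant and the map induced on the quotient is zero, the characteristic polynomial is $t^{n-2}(t^2-at-b)$, which does not split. Your point that $b\neq 0$ guarantees $y\notin\F x$ is exactly the detail that needed checking. In effect you reprove, for the one rank-one element that matters, the needed case of the Erasure Lemma. Indeed $\varphi\otimes x$ with $x\notin H$, $\varphi\neq 0$, $\varphi(x)=0$ cannot leave $H$ invariant, because $\Ker\varphi\neq H$.

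\textbf{Comparison.} The paper's argument is shorter and passes through a stronger structural fact, the $S$-invariance of $H$ for all of $S$. Yours is fully self-contained and elementary. It also shows transparently that only the non-quadratic-closedness of $\F$ is used, with no perfectness or characteristic assumption; the cited Erasure Lemma holds in that generality too, so both proofs cover the same fields.
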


This lemma is an easy consequence of the following result, which was proved in \cite{dSPtriangularizable} (for any non-quadratically closed field) in matrix (and transposed) form
(see lemma 5.1 there).

\begin{lemma}[Erasure Lemma]
Let $V$ be a vector space with dimension $n \geq 2$, and let $S$ be a weakly triangularizable linear subspace of $\End(V)$.
Assume that $S$ is a wall, with associated hyperplane $H$. Then $H$ is $S$-invariant.
\end{lemma}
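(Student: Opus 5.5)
The plan is to argue by contradiction, using the freedom that the wall hypothesis gives on one ``column'' of the operators in $S$ to prescribe the characteristic polynomial of some element of $S$. This is the same pole-placement phenomenon that underlies controllability in linear control theory, here used in dual form.

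First I fix a linear form $f \in V^\star$ with $\Ker f=H$. The wall hypothesis says exactly that $f \otimes y \in S$ for every $y \in V$. Suppose that $H$ is not $S$-invariant, and pick $u \in S$ with $u(H)\not\subseteq H$. In other words, the linear form $f\circ u$ does not vanish on $H$, so $f$ and $f\circ u$ are linearly independent in $V^\star$. Every operator in the affine family $u+f\otimes y$, for $y \in V$, lies in $S$ and is therefore triangularizable. I will show that some member of this family has a characteristic polynomial with an irreducible quadratic factor, which contradicts weak triangularizability because $\F$ is not quadratically closed.

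Next I introduce the ``unobservable'' subspace
$$K:=\bigcap_{j \geq 0} \Ker(f\circ u^j).$$
It is $u$-invariant and contained in $H$. Since $f$ vanishes on $K$, each $u+f\otimes y$ agrees with $u$ on $K$. Hence $K$ is invariant under every $u+f\otimes y$, and the restriction to $K$ does not depend on $y$. Let $k:=\dim V/K$. The forms $f,f\circ u,\dots,f\circ u^{k-1}$ form a basis of $K^o$, and $k \geq 2$ because $f$ and $f\circ u$ are independent. On $V/K$ the induced pair $(\overline{u},\overline{f})$ is then ``observable''. So in a suitable basis of $V/K$, $\overline{u}$ is a companion-type matrix and $\overline{f}$ is a coordinate form.

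By the dual of the classical pole-placement theorem, as $y$ ranges over $V$ the characteristic polynomial of the operator induced by $u+f\otimes y$ on $V/K$ ranges over all monic polynomials of degree $k$. I will check this directly on the companion form, or via the identity
$$\chi_{u+f\otimes y}(t)=\chi_u(t)-f\bigl(\operatorname{adj}(t\,\id_V-u)\,y\bigr),$$
noting that the polynomials $f(\operatorname{adj}(t\,\id-u)\,\cdot)$ span all polynomials of degree less than $k$ after factoring out $\chi_{u_{|K}}$. I then choose a monic irreducible quadratic $p$, which exists since $\F$ is not quadratically closed, and take the target polynomial $p\,t^{k-2}$. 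The resulting operator $u+f\otimes y \in S$ has characteristic polynomial $\chi_{u_{|K}}\cdot p\cdot t^{k-2}$, which does not split. This is the required contradiction.

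The main obstacle is the pole-placement step, namely verifying that the achievable characteristic polynomials on $V/K$ are exactly all monic polynomials of degree $k$. I would handle it by writing $\overline{u}$ in the basis dual to $(\overline{f},\overline{f}\circ\overline{u},\dots)$. There the perturbation $\overline{f}\otimes \overline{y}$ modifies precisely the column that carries the coefficients of the characteristic polynomial in companion form.
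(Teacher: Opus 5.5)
Your proof is correct, and it takes a different route from the paper, which gives no argument of its own. The paper imports the Erasure Lemma from lemma 5.1 of \cite{dSPtriangularizable}, where it is proved in transposed matrix form for an arbitrary non-quadratically closed field.

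Your version is self-contained and coordinate-free:
\begin{itemize}
\item The subspace $K=\bigcap_j \Ker(f\circ u^j)$ isolates the factor $\chi_{u_{|K}}$ of the characteristic polynomial, which no perturbation $f\otimes y$ can change.
\item Non-invariance of $H$ forces $k=\dim V/K\geq 2$.
\item Pole placement on $V/K$ then realizes $\chi_{u_{|K}}\,p\,t^{k-2}$ inside $S$, contradicting weak triangularizability.
\end{itemize}
This buys two things. It makes transparent that only the non-quadratic-closedness of $\F$ is used (no perfectness, characteristic or cardinality assumption), which matches the paper's parenthetical remark. It also proves more: one $u\in S$ with $u(H)\not\subseteq H$ lets $S$ realize $\chi_{u_{|K}}\,q$ for every monic $q$ of degree $k$. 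The citation, by contrast, simply keeps the present text short.

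One small inaccuracy is in your last paragraph. In the basis of $V/K$ dual to $(\overline{f},\overline{f}\circ\overline{u},\dots,\overline{f}\circ\overline{u}^{k-1})$, the companion coefficients of $\overline{u}$ sit in the last row, while $\overline{f}\otimes\overline{y}$ fills the first column. So in that basis surjectivity is not visible by inspection. The picture you describe holds in a different basis:
\begin{itemize}
\item Write $\overline{f}\circ\overline{u}^k=\sum_{i=0}^{k-1} a_i\,\overline{f}\circ\overline{u}^{k-1-i}$.
\item Define the Horner-type forms $m_0:=\overline{f}$ and $m_{i+1}:=m_i\circ\overline{u}-a_i\,\overline{f}$.
\item In the basis dual to $(m_0,\dots,m_{k-1})$, $\overline{u}$ has ones on the superdiagonal and $(a_0,\dots,a_{k-1})$ in the first column, which is exactly the column $\overline{f}\otimes\overline{y}$ modifies.
\end{itemize}
Alternatively, your adjugate identity already settles it. The coefficients of $\overline{f}\bigl(\operatorname{adj}(t\,\id-\overline{u})\,\overline{y}\bigr)$ are related unitriangularly to the values $\overline{f}(\overline{u}^j\overline{y})$, $0\leq j<k$. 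This justifies the surjectivity you asserted without proof.
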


\begin{proof}[Proof of the First Confinement Lemma]
By the Erasure Lemma, the subspace $H$ is $S$-invariant.
Let $x \in V \setminus H$. Let $f \in x^\bot$ be such that $f \otimes x \in S$.

Let $y \in H$. Then $(f \otimes x)=f(y) x$ must belong to $H$, and hence $f(y)=0$.
Hence $H \subseteq \Ker f$. Hence either $f=0$ or $\Ker f=H$, but the latter would lead to $x \in H$, which is not true.
Hence $f=0$ and we conclude that $x$ is $S$-adapted.
\end{proof}

\subsection{The intransitivity of $S^\bot$}

In the remainder of Section \ref{section:n=34}, we fix an integer $n \in \{3,4\}$ and we let
$V$ be an $n$-dimensional vector space over $\F$ and $S$ be a weakly triangularizable subspace of $\End(V)$.
We assume that there is a $2$-complex $(V_i)_{1 \leq i \leq n}$ of $V$ that contains all the $S$-adapted vectors.
We seek to give a direct proof that $S$ is a hurdle.

To this end, we proceed as in Section \ref{section:inductionweaklysetup}, by introducing the trace-orthogonal space $S^\bot \subseteq \End(V)$ and the dual operator space
$\widehat{S^\bot}$, consisting of all the operators
$$\widehat{x} : v \in S^\bot \mapsto v(x) \in V$$
with $x \in V$.
Note immediately that $\dim S^\bot=n^2-\dim S \geq \dbinom{n}{2} \geq n$.

Here is our first result.

\begin{claim}
Every operator in $\widehat{S^\bot}$ has rank at most $n-1$.
\end{claim}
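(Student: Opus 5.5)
The plan is to reduce the claim to a statement about the intersections $S \cap (V^\star \otimes x)$ and then use the Vanishing Lemma for Homogeneous Polynomials to handle the vectors lying in the $2$-complex. For $x \in V \setminus \{0\}$, the operator $\widehat{x}$ has range $S^\bot x$. So by \eqref{eq:dualtransitive},
$$\rk \widehat{x}=n-\dim\bigl(S \cap (V^\star \otimes x)\bigr).$$
Hence $\rk \widehat{x} \leq n-1$ whenever $S$ contains a nonzero operator with range $\F x$. This holds in particular for every non-$S$-adapted vector $x$, because such an $x$ satisfies $S \cap (x^\bot \otimes x) \neq \{0\}$. For $x=0$ we have $\widehat{x}=0$. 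By assumption every $S$-adapted vector lies in $V_1 \cup \cdots \cup V_n$. We conclude that $\rk \widehat{x} \leq n-1$ for all $x \in V \setminus \bigcup_{i=1}^n V_i$.

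To extend this to all of $V$, fix bases $(u_1,\dots,u_N)$ of $S^\bot$ and $(e_1,\dots,e_n)$ of $V$, and let $M(x) \in \Mat_{n,N}(\F)$ be the matrix of $\widehat{x}$. The entries of $M(x)$ depend linearly on $x$. Since $N=\dim S^\bot \geq n$, we may take any subset $J \subseteq \lcro 1,N\rcro$ with $|J|=n$ and define $p_J(x)$ as the $n \times n$ minor of $M(x)$ with column indices in $J$. Each $p_J$ is an $n$-homogeneous polynomial function on $V$, and by the previous paragraph it vanishes outside $\bigcup_i V_i$. If every $p_J$ vanishes identically, then $\rk M(x) \leq n-1$ for all $x$, which is the claim.

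The remaining step is to check that the hypotheses of Lemma \ref{lemma:vanishinghomogeneous} hold with $d=n$ and $m=4$. Since $\F$ is perfect of characteristic $2$ with $|\F|>2$, it has at least $4$ elements, so $|\F| \geq m \geq d$ as required.
\begin{itemize}
\item If $n=3$, the $2$-complex has two lines and one plane. Condition (i) asks for at most $m-1=3$ subspaces of dimension $1$, and there are two. Condition (ii) asks for at most $m-d=1$ subspace of dimension $n-1=2$, and there is exactly one.
\item If $n=4$, the $2$-complex has two lines and two planes. Condition (i) allows at most $3$ subspaces in each of dimensions $1$ and $2$, and there are two of each. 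There is no subspace of dimension $3$, so condition (ii) holds trivially since $m-d=0$.
\end{itemize}
In both cases the lemma gives $p_J=0$ for all $J$, which proves the claim.

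The only delicate point is this numerical bookkeeping. In particular, for $n=3$ the single plane uses up the whole allowance $m-d=1$, so the assumption $|\F| \geq 4$ is exactly what is needed. By contrast, the Covering Lemma alone would only give some vectors where the rank bound holds, not the bound for every $x$.
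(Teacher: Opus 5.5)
Your proposal is correct and follows the paper's proof essentially step for step. The rank bound for non-$S$-adapted vectors comes from \eqref{eq:dualtransitive}. The $n\times n$ minors are then degree-$n$ homogeneous polynomials, and the Vanishing Lemma with $m=4$, $d=n$ kills them, using the same dimension count for $n\in\{3,4\}$. The one assertion you leave unjustified, $N\geq n$, follows from Theorem \ref{theo:majodim}, as the paper notes just before the claim. If $N<n$, the claim would be trivial anyway.
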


\begin{proof}
We consider a basis $(v_1,\dots,v_r)$ of $S^\bot$ and a basis $(e_i)_{1 \leq i \leq n}$ of $V$.
Let us assign to every $x \in V$ the matrix $M(x)$ of $\widehat{x}$ in these bases.
Whenever $x \in V \setminus \{0\}$ is not $S$-adapted we have $S \cap (V^\star \otimes x) \neq \{0\}$ and hence $\dim (S^\bot x) \leq n-1$
(see \eqref{eq:dualtransitive}), to the effect that $\rk M(x) \leq n-1$.
Now, take an arbitrary subset $I$ of $\lcro 1,r\rcro$ with cardinality $n$, and for $x \in V$
denote by $\Delta_I(x)$ the $n$-by-$n$ minor of $M(x)$ obtained by selecting the column indices in $I$.
Hence $\Delta_I(x)=0$ for all $x \in V \setminus \underset{i=1}{\overset{n}{\bigcup}} V_i$, and the mapping $x \in V \mapsto \Delta_I(x)$
is a homogeneous polynomial function of degree $n$. Now, we take $m:=4$ and $d:=n$, and we observe that the assumptions of the
Vanishing Lemma for Homogeneous Polynomials apply to $\Delta_I$: indeed, in any case $(V_i)_{1 \leq i \leq n}$ contains at most $3$
subspaces of dimension $j$ for all $j \in \{1,\dots,n-2\}$, and either $n=3$ and $(V_i)_{1 \leq i \leq n}$ contains exactly one space of dimension $n-1$
(and $m-d=1$), or $n=4$ and it contains none.
It follows that $\Delta_I=0$. Varying $I$ then yields $\rk M(x) \leq n-1$ for all $x \in V$, which is the claimed result.
\end{proof}

\subsection{A quick review of intransitive operator spaces}

Now we recall the following definitions:

\begin{Def}
Given vector spaces $U$ and $V$, a linear subspace $\calT$ of $\Hom(U,V)$ is called
\textbf{intransitive} when $\calT x \neq V$ for all $x \in U$.
In that case, it is called \textbf{primitively intransitive} whenever there is no nontrivial linear subspace $W$ of $V$ for which
$\pi \calT$ is an intransitive subspace of $\Hom(U,V/W)$, where $\pi : V \twoheadrightarrow V/W$ denotes the standard projection.
\end{Def}

We also recall the following result, which is a variation of Atkinson and Lloyd's main result on primitive spaces of bounded rank matrices
\cite{AtkinsonLloydPrim}:

\begin{theo}[Atkinson and Lloyd's theorem on intransitive operator spaces]
Let $U$ and $V$ be finite-dimensional vector spaces, with $n:=\dim V>0$ such that $n \leq |\F|$.
Let $\calT$ be a primitively intransitive linear subspace of $\Hom(U,V)$.
Then $\dim \calT \leq \dbinom{n}{2}$.
\end{theo}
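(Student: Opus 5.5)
The plan is to deduce the statement from Atkinson and Lloyd's classical theorem on primitive spaces of bounded rank matrices \cite{AtkinsonLloydPrim} by duality, in the spirit of \cite{dSPsemilin}. I have not checked the exact conditions in Atkinson and Lloyd's definition of primitivity, so the plan is built around matching them. The intermediate object is the dual operator space
$$\widehat{\calT}:=\{\widehat{x} : t \in \calT \mapsto t(x) \in V \mid x \in U\} \subseteq \Hom(\calT,V).$$
Intransitivity of $\calT$ says exactly that $\rk \widehat{x} \leq n-1$ for all $x \in U$. So $\widehat{\calT}$ is a space of bounded rank at most $r:=n-1$, with target of dimension $n=r+1$ and source of dimension $\dim \calT$.

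The version of the Atkinson–Lloyd theorem I intend to invoke is the following. If $|\F| \geq r+1$ and a space of linear maps with rank at most $r$ is primitive, then the dimension of its source space is at most $\binom{r+1}{2}$. With $r=n-1$ this gives $\dim \calT \leq \binom{n}{2}$, provided $\widehat{\calT}$ is primitive. The field-size hypothesis $|\F| \geq n = r+1$ is exactly the one assumed in the statement.

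The real work is to check the primitivity conditions on $\widehat{\calT}$, taking the ``dual'' of each condition in turn.
\begin{itemize}
\item \emph{No common kernel vector in $\calT$.} Suppose $t \in \calT$ satisfies $\widehat{x}(t)=0$ for all $x$. Then $t(x)=0$ for all $x \in U$, so $t=0$.
\item \emph{The ranges of the $\widehat{x}$ span $V$.} Suppose instead they all lie in a proper subspace $V'$ of $V$. Then $\calT x \subseteq V'$ for every $x$. Taking $W$ to be a $1$-dimensional subspace of $V$ not contained in $V'$ (or arbitrary if $V' = \{0\}$), $\pi \calT$ stays intransitive, which contradicts primitive intransitivity.
\item \emph{Compression on the target side.} Atkinson and Lloyd exclude spaces whose rank bound drops after composing with a projection $V \twoheadrightarrow V/W$ for a nonzero $W$. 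Here $\rk(\pi \circ \widehat{x}) \leq \dim(V/W)-1$ for all $x$ is literally the statement that $\pi\calT$ is intransitive in $\Hom(U,V/W)$. This is excluded by hypothesis.
\item \emph{Compression on the source side.} Atkinson and Lloyd also exclude spaces in which restricting to a subspace of $\calT$ of codimension $k$ lowers the rank bound by $k$. Translated back, such a subspace $\calT' \subseteq \calT$ of codimension $k$ would satisfy $\dim \calT' x \leq n-1-k$ for all $x$. In that case I plan to argue that the total bound only improves: either one replaces $\calT$ by a smaller intransitive space into a quotient of $V$, which is again forbidden, or one proceeds by induction on $n$ together with the trivial estimate $\dim \calT \leq \dim \calT' + k$.
\end{itemize}

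The main obstacle I expect is this last matching of definitions. Atkinson and Lloyd's notion of primitivity is phrased for matrices and is symmetric in rows and columns, while the present notion is one-sided. So one must check that the source-side compressibility either cannot occur or can be absorbed into an induction on $n$ and $\dim\calT$ without losing the bound $\binom{n}{2}$. I would first try to show that source-side compression produces a nonzero $W \subseteq V$ with $\pi\calT$ intransitive, yielding a direct contradiction, and fall back on the inductive estimate if that fails.
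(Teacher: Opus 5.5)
Your route, which dualizes to $\widehat{\calT}$ (a space of rank at most $n-1$) and invokes Atkinson--Lloyd, is the one the paper relies on; the paper gives no argument of its own and refers to \cite{dSPsemilin}. But your proposal stops exactly at the only nontrivial point, source-side compression, and neither of your two suggested ways of handling it works.

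\emph{Forcing a contradiction is impossible.} Take $n=2$ and $\calT=\F a$ with $\rk a=2$. Then $\calT$ is primitively intransitive, since for every line $W$ some $a(x)\notin W$. Yet $\{0\}$ is a hyperplane of $\calT$ on which $\widehat{\calT}$ has rank $0=(n-1)-1$. So $\widehat{\calT}$ is not primitive in Atkinson and Lloyd's sense, and no $\pi\calT$ is intransitive.

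\emph{Induction on $n$ does not apply either.} Your hypothesis on the compressed space $\calT'$ is a bound $n-1-k$ on the rank of $\widehat{\calT'}$. But $\calT'$ still takes values in the $n$-dimensional space $V$ and is not primitively intransitive into any space of dimension $n-k$. So the statement for smaller $n$ says nothing about it.

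The missing idea is to choose the compression maximal and then apply Atkinson--Lloyd at the lower rank. Let $k$ be the largest integer such that some subspace $\calT'\subseteq\calT$ of codimension $k$ satisfies $\dim\calT'x\le n-1-k$ for all $x\in U$; thus $0\le k\le n-1$. If $\calT'=\{0\}$, then $\dim\calT=k\le\binom n2$.

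Otherwise, let $V_1:=\sum_{x\in U}\calT'x$ and view $\widehat{\calT'}$ as a space of maps from $\calT'$ to $V_1$. It has the following properties:
\begin{enumerate}[(i)]
\item It has no common kernel and no zero row.
\item By maximality of $k$, its maximal rank is exactly $r':=n-1-k$ and it admits no column compression.
\item It admits no row compression: a line $L\subseteq V_1$ with $\dim\pi_L\calT'x\le r'-1$ for all $x$ would give $\dim\pi_L\calT x\le (r'-1)+k=n-2$, so $\pi_L\calT$ would be intransitive in $\Hom(U,V/L)$, which is excluded.
\end{enumerate}
Hence $\widehat{\calT'}$ is primitive of rank $r'$ with $|\F|\ge n\ge r'+1$. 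Atkinson--Lloyd then give $\dim\calT'\le\binom{n-k}{2}$. Therefore $\dim\calT\le k+\binom{n-k}{2}\le\binom n2$, because $\binom n2-\binom{n-k}{2}=\frac{k(2n-k-1)}{2}\ge k$ when $0\le k\le n-1$ and $n\ge 2$. For $n=1$, intransitivity forces $\calT=\{0\}$. This single argument also covers your other bullets, with $k=0$ being the case where $\widehat{\calT}$ itself is primitive.

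Note also a slip in your second bullet: the line $W$ must be taken \emph{inside} $V'$, not outside. If $V'$ is a hyperplane and $W\not\subseteq V'$, then $\pi$ maps $V'$ isomorphically onto $V/W$. For example, $\calT=\Hom(U,V')$ then has $\pi\calT$ transitive.
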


See theorem 1.22 and proposition 4.5 of \cite{dSPsemilin} for this formulation of the theorem, including a characterization of the spaces with dimension close to the maximal one.

\subsection{Closing in on the structure of $S$}

Our next aim is to prove the following result:

\begin{claim}\label{claim:wallorhurdle}
The space $S$ is a wall or a hurdle.
\end{claim}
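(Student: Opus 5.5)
The plan is to exploit the intransitivity of $S^\bot$ established in the preceding claim: for every $x\in V$ we have $S^\bot x\neq V$, so $S^\bot$ is an intransitive subspace of $\Hom(V,V)$. By Theorem \ref{theo:majodim} (or Theorem \ref{theo:weaklyadapted}) we also have $\dim S\le \binom{n+1}{2}$, hence $\dim S^\bot\ge \binom{n}{2}$. I would choose a linear subspace $W\subsetneq V$ that is maximal such that $\pi S^\bot$ remains intransitive in $\Hom(V,V/W)$, where $\pi:V\twoheadrightarrow V/W$ is the projection. Then $\pi S^\bot$ is primitively intransitive. Set $k:=\dim W$ and $m:=n-k\ge 1$. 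Since $m\le n\le 4\le|\F|$, Atkinson and Lloyd's theorem applies and gives
$$\binom{n}{2}\le \dim S^\bot\le \dim\bigl(S^\bot\cap\Hom(V,W)\bigr)+\dim \pi S^\bot\le nk+\binom{m}{2}.$$

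The first step is to dualize the two target shapes. The case $m=1$ is the easy one. There, intransitivity into the line $V/W$ forces $\pi S^\bot=\{0\}$, that is $S^\bot\subseteq\Hom(V,W)$ with $W$ a hyperplane. Taking trace orthogonals, $S$ contains $\Hom(V,W)^\bot$, which is exactly the set of endomorphisms vanishing on $W$. So $S$ is a wall with associated hyperplane $W$.

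Symmetrically, one computes that $S$ is a hurdle, with plane $P\subseteq V^\star$ and $Q:={}^oP$, exactly when every $v\in S^\bot$ induces a scalar multiple of the identity on $V/Q$ (this requires $v(Q)\subseteq Q$). Indeed $\tr\bigl(v(\varphi\otimes x)\bigr)=\varphi(v(x))$ must vanish whenever $\varphi\in P$ and $\varphi(x)=0$. Hence the goal in the case $m=2$ is to show that $\pi S^\bot=\F\,\pi$ for a suitable choice of $W$, possibly after replacing $W$ by a better codimension-$2$ subspace.

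For $m=2$, primitivity together with Atkinson and Lloyd's bound gives $\dim\pi S^\bot\le 1$. Primitivity rules out $\pi S^\bot=\{0\}$, because a zero space becomes intransitive after a further quotient by a line. So $\pi S^\bot=\F f$ for some $f\in\Hom(V,V/W)$ of rank $2$, and the dimension count then forces $S^\bot\cap\Hom(V,W)$ to have dimension at least $\binom n2-1$, which is close to $nk$. I would then bring in weak triangularizability of $S$. Using the large subspace $S^\bot\cap\Hom(V,W)$, one identifies an explicit large piece of $S$ acting on a $2$-dimensional section $V/W$. If $f$ were not of the form $\lambda\pi$ modulo maps that can be absorbed by changing $W$ (for instance, if $f_{|W}\neq 0$, or if $f$ induced a non-scalar endomorphism of $V/W$), then $S$ would contain an element whose induced action on a $2$-dimensional quotient has an irreducible quadratic characteristic polynomial. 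Such an element is not triangularizable, a contradiction. Here I would use that a monic irreducible quadratic exists because $\F$ is not quadratically closed, and that every matrix of $\mathfrak{sl}_2(\F)$ is triangularizable while other traces are not all available.

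The remaining case $m\ge 3$ (only $n=4$, $k\le 1$) is where I expect the main obstacle: the crude count $4k+\binom{m}{2}\ge 6$ gives no contradiction by itself. Here I would first try the refined Atkinson--Lloyd statement from \cite{dSPsemilin} (proposition 4.5), which characterizes primitively intransitive spaces of dimension $\binom m2$ or $\binom m2-1$. It should show that $\pi S^\bot$ is essentially the space of alternating-type operators $x\mapsto a\wedge x$. Dualizing that structure, I expect $S$ to contain a copy of a full $\Matt$-complement on a $3$-dimensional section that admits a non-triangularizable element, or else to fall back into the cases $m\le 2$ above. Making this last dualization precise, and handling $|\F|=4$ where the bound $m\le|\F|$ is tight, is the step I expect to require the most care.
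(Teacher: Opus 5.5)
\textbf{There is a genuine gap: the case $m\geq 3$ is not proved.}

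Your case $m=1$ is correct. Your case $m=2$ can be completed with the same $W$. Lifting $f$ to $\tilde f\in S^\bot$ gives $S^\bot\subseteq\Hom(V,W)+\F\tilde f$, so $S$ contains every $g\circ\pi$ with $g\in\Hom(V/W,V)$ and $\tr(f\circ g)=0$. If $f_{|W}\neq 0$, these operators induce all of $\End(V/W)$, which is impossible. Otherwise $f=\bar f\circ\pi$ with $\bar f$ invertible. A non-scalar $\bar f$ is then excluded by a $2\times 2$ companion-matrix computation: the hyperplane $\{\bar g:\tr(\bar f\bar g)=0\}$ contains a matrix with any prescribed irreducible characteristic polynomial. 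Hence $f\in\F^\times\pi$, and $S$ is a hurdle.

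For $m\geq 3$ you give only an expectation, not an argument. You also mis-enumerate this case. Besides $n=4$, $k\leq 1$, it contains $n=3$, $k=0$, i.e.\ $S^\bot$ itself primitively intransitive, where your count $3\leq 0+3$ is tight. The refined Atkinson--Lloyd statement would at best put $\pi S^\bot$ into a normal form. You would still have to show by hand that the corresponding trace-dual space is either not weakly triangularizable or a wall or hurdle, and nothing in the proposal does this. There is also a misdirected use of Theorem \ref{theo:majodim}: applying it on $V$ only gives a lower bound on $\dim S^\bot$, which is not the useful direction.

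\textbf{The missing idea} is to squeeze $S_W:=\{u\in S : u_{|W}=0\}$ rather than $S^\bot$, and to apply Theorem \ref{theo:majodim} on $V/W$.
\begin{enumerate}[(1)]
\item Since $S_W=(S^\bot+\Hom(V,W))^\bot$, Atkinson--Lloyd gives $\dim S_W=nm-\dim\pi S^\bot\geq \binom{m+1}{2}+m(n-m)$.
\item The map $u\mapsto u^{V/W}$ on $S_W$ has kernel $S\cap\calN_W$, with $\dim\calN_W=m(n-m)$.
\item Its range is weakly triangularizable in $\End(V/W)$, so Theorem \ref{theo:majodim} on $V/W$ bounds the range by $\binom{m+1}{2}$.
\item Equality is forced, so $\calN_W\subseteq S$ and the range is optimal.
\item Theorem \ref{theo:grandscardinaux} applies since $m\leq 4\leq|\F|$, so the range is a wall or a hurdle of $\End(V/W)$.
\item This lifts to $S$ because $\calN_W\subseteq S$.
\end{enumerate}
This is the paper's argument. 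It treats every $m$ uniformly. Combined with the maximality of $W$, it even shows that $m\geq 3$ never occurs, which is what you hoped for, but it needs the classification theorem to get there. Your $m\leq 2$ computations are hand-made special cases of it.
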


To start with, we assume that $S^\bot$ is primitively intransitive. Then by Atkinson and Lloyd's theorem we have $\dim S^\bot \leq \dbinom{n}{2}$
and hence $\dim S \geq \dbinom{n+1}{2}$. Hence $S$ is optimal and we can use Theorem \ref{theo:grandscardinaux}.
It follows that in some basis, $S$ is represented by a space of the form $\calA \vee \Mat_1(\F)$ for some linear subspace $\calA$ of $\Mat_{n-1}(\F)$,
or by a space of the form $\calB \vee \mathfrak{sl}_2(\F)$ for some linear subspace $\calB$ of $\Mat_{n-2}(\F)$.
In the first case it is clear that $S$ is a wall (if the said basis is $(e_i)_{1 \leq i \leq n}$, then a corresponding hyperplane is $\Vect(e_i)_{1 \leq i \leq n-1}$);
in the second case $S$ is obviously a hurdle.

Hence, in the remainder of the proof we assume that $S^\bot$ is not primitively intransitive.
Then can take a maximal nontrivial linear subspace $W$ of $V$ such that, for the standard projection
$\pi : V \twoheadrightarrow V/W$, the space $\pi S^\bot$ is intransitive. Set $s:=\dim(V/W)$.
By the maximality of $W$, the space $\pi S^\bot$ is primitively intransitive, and we can apply Atkinson and Lloyd's theorem to it, which yields $\dim (\pi S^\bot) \leq \dbinom{s}{2}$.
Now, set
$$S_W:=\{u \in S : u_{|W}=0\}.$$
By a standard orthogonality formula (see e.g.\ lemma 2.2 of \cite{dSPtriangularizable2}), we have
$$\dim S_W=(\dim V)(\dim (V/W))-\dim (\pi S^\bot)$$
and hence
$$\dim S_W \geq sn-\dbinom{s}{2}=\dbinom{s+1}{2}+s(n-s).$$
Consider also the linear mapping
$$\Phi : u \in S_W \mapsto u^{V/W} \in \End(V/W),$$
where $u^{V/W}$ stands for the endomorphism of $V/W$ induced by $u$.
The range of $\Phi$ is a weakly triangularizable subspace of $\End(V/W)$, so by Theorem \ref{theo:majodim} we have $\rk \Phi \leq \dbinom{s+1}{2}$.
The kernel of $\Phi$ is the intersection of $S$ with the space
$$\calN_W:=\{u \in \End(V) :  \im u \subseteq W \subseteq \Ker u\},$$
and we note that $\dim \calN_W=s(n-s)$.
Hence by the rank theorem
$$\dim S_W \leq s(n-s)+\dbinom{s+1}{2}.$$
Combining the previous inequalities leads to the following results:
\begin{enumerate}[(i)]
\item $\calN_W \subseteq S$;
\item $\im \Phi$ is a weakly triangularizable subspace of $\End(V/W)$ with dimension $\dbinom{s+1}{2}$;
\end{enumerate}

Next, we can now apply Theorem \ref{theo:grandscardinaux} to $\im \Phi$, and we recover that it is a wall
or a hurdle. Finally, we shall combine this observation with point (i) to deduce that $S$ is itself a wall or a hurdle.
Assume that $\im \Phi$ is a wall (respectively, a hurdle)
with corresponding hyperplane $\overline{H}$ (respectively, with a linear subspace $\overline{H}$ of $V$ with codimension $2$
such that $\im \Phi$ contains all the endomorphisms of $V/W$ with trace zero that vanish on $\overline{H}$).
Set $H:=\pi^{-1}(\overline{H})$ and note that $H$ has codimension $1$ (respectively, $2$) in $V$.
Let then $u \in \End(V)$ vanish on $H$ (respectively, vanish on $H$ and have trace zero). Then in particular $u$ vanishes on $W$,
and the induced endomorphism vanishes on $\overline{H}$ (respectively, vanishes on $\overline{H}$ and has trace zero).
There exists $v \in S_W$ such that $v^{V/W}=u^{V/W}$. Then $u-v \in \calN_W$ and hence $u-v \in S$, by point (i).
Summing, we conclude that $u \in S$.
Hence $S$ is a wall (respectively, a hurdle). This completes the proof of Claim \ref{claim:wallorhurdle}.

\subsection{Completing the proof}

We can now easily finish the proof. Assume that $S$ is a wall, and consider a corresponding linear hyperplane $H$ of $V$.
By the First Confinement Lemma, all the vectors of $V$ that are not $S$-adapted must belong to $H$.
Taking $V_{n+1}:=H$, we deduce from the starting assumption that the subspaces $V_1,\dots,V_{n+1}$ cover $V$.
This however contradicts the Covering Lemma, as in the family $(V_1,\dots,V_{n+1})$ all the components are proper subspaces of $V$,
with no more than three spaces for each dimension.

Therefore $S$ is a hurdle, which completes the proof of Theorem \ref{theo:adaptedvectorsrefined}
for $n \in \{3,4\}$.

\section{Existence of adapted vectors: The inductive step}

The present section consists of the main part of the inductive proof of Theorem \ref{theo:adaptedvectorsrefined}.
Relegated to the final section are the two confinement lemmas that are necessary in the last steps of the proof, and whose proofs
are entirely independent of the induction used here.

We have already closed the case $n \leq 4$ in the previous section, and now we take an integer $n \geq 5$ (this is critical)
and a vector space $V$ of dimension $n$. We assume that the underlying field $\F$ is perfect with characteristic $2$, but not quadratically closed, and that $|\F|>2$.
In particular $|\F| \geq 4$. We also assume that Theorem \ref{theo:adaptedvectorsrefined} holds at the integer $n-1$, i.e., for every $(n-1)$-dimensional vector space
and every weakly triangularizable subspace of it. Throughout, we take a weakly triangularizable subspace $S$ of $\End(V)$ and we assume that
there exists a $2$-complex $(V_1,\dots,V_n)$ of $V$ whose union contains all the $S$-adapted vectors.
Our ultimate goal is to prove that $S$ is a hurdle.

\subsection{The starting argument}

Just like in Section \ref{section:inductiveweaklyadapted}, we use the Covering Lemma in the dual vector space $V^\star$
to find a basis $(f_1,\dots,f_n)$ of the dual space $V^\star$ such that $V_j \not\subseteq \Ker f_i$ for all $i,j$ in  $\lcro 1,n\rcro$.
Again, for $i \in \lcro 1,n\rcro$ we set
$$W_i:=\{x \in \Ker f_i : f_i \otimes x \in S\}.$$
Now, assume temporarily that $\dim W_i>\lfloor \frac{n}{2}\rfloor$ for all $i \in \lcro 1,n\rcro$.
As in Section \ref{section:inductiveweaklyadapted}, we deduce that
$$\dim (S+\F \id_V) \geq 1+n+n \left\lfloor \frac{n}{2}\right\rfloor \geq 1+n+\frac{n(n-1)}{2}=1+\dbinom{n+1}{2},$$
thereby contradicting Theorem \ref{theo:majodim}, since $S+\F \id_V$ is weakly triangularizable.
Hence there is an index $i$ such that $\dim W_i \leq  \lfloor \frac{n}{2}\rfloor$, and without loss of generality we will assume from now on that
$$\dim W_1 \leq  \left\lfloor \frac{n}{2}\right\rfloor.$$

Now, as in Section \ref{section:inductiveweaklyadapted} we introduce the space $H:=\Ker f_1$, the subspace $S \cap \Hom(V,H)$
and the induced subspace
$$\calT:=\{u_H \mid u \in S \cap \Hom(V,H)\} \subseteq \End(H).$$
Let us take a linear subspace $G$ of $H$ that includes $W_1$ and has dimension $ \lfloor \frac{n}{2}\rfloor$.
This time around, we note that $\calV'=(V_3 \cap H,\dots,V_n \cap H,G)$ is a $2$-complex of $H$
and that $V_1 \cap H=\{0\}=V_2 \cap H$.

Let us prove that the union of $\calV'$ must contain all the $\calT$-adapted vectors.
So, assume on the contrary that it does not, and choose a $\calT$-adapted vector $x \in H$ that does not belong to the union of $\calV'$.
In particular, as $x \in H$ and $V_1 \cap H=V_2 \cap H=\{0\}$, we observe that $x \not\in V_1 \cup \cdots \cup V_n$,
to the effect that $x$ is not $S$-adapted. This yields $\varphi \in x^\bot \setminus \{0\}$ such that $\varphi \otimes x \in S$.
However $\varphi \otimes x$ has its range included in $H$, so it induces an endomorphism of $H$ that belongs to $\calT$, leading to
$\varphi_{|H} \otimes x \in \calT$. Observing that $\varphi_{|H}(x)=\varphi(x)=0$, we deduce that $\varphi_{|H}=0$
because $x$ is $\calT$-adapted. In turn, this yields $\varphi=\lambda f_1$ for some $\lambda \in \F^\times$,
and hence $f_1 \otimes x \in S$. We conclude that $x \in W_1$, which contradicts the assumption that $x \not\in G$.

Hence the union of $\calV'$ contains all the $\calT$-adapted vectors.
By induction, we deduce that $\calT$ is a hurdle.

\subsection{A deeper analysis of $S$}

The next and crucial part of the proof is almost a word-for-word adaptation of the corresponding proof in \cite{dSPfeweigenvalues2}
(sections 5.8 and 5.9 therein), and it is useless to give all the details.
Instead, we will explain the overall structure and scrutinize the only differences in the proofs.

Recall that $\dim W_1 \leq \left\lfloor \frac{n}{2}\right\rfloor$ and that $\calT \subseteq \End(H)$ is a hurdle.
The latter fact yields a $2$-dimensional linear subspace $P$ of $H^\star$ such that $\calT$ contains all the trace zero endomorphisms of $H$
that vanish on the pre-dual orthogonal
$$G:={}^{\circ} P \subseteq H,$$
which is a linear subspace of $H$ with codimension $2$.

The next step is the following claim:

\begin{claim}\label{claim:tracezero}
All the endomorphisms of $H$ induced by the elements of $S$ that map into $H$
leave $G$ invariant and induce trace zero endomorphisms of $H/G$.
\end{claim}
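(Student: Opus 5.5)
The claim is a statement about $\calT$ alone: it is a weakly triangularizable subspace of $\End(H)$ containing every trace zero endomorphism of $H$ that vanishes on $G={}^\circ P$. So I would prove it as a general "erasure" statement for hurdles, analogous to the Erasure Lemma for walls. This is the Second Erasure Lemma of \cite{dSPtriangularizable}, mentioned in the introduction, and I would first try to quote it directly. In case its formulation there does not match exactly, here is the direct plan.

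Fix a basis of $H$ whose first $n-3$ vectors span $G$, and write an arbitrary $u \in \calT$ blockwise as
$$u \sim \begin{bmatrix} A & B \\ C & D \end{bmatrix}, \qquad C \in \Mat_{2,n-3}(\F),\ D \in \Mat_2(\F).$$
By hypothesis, $\calT$ contains every matrix $\begin{bmatrix} 0 & K \\ 0 & N \end{bmatrix}$ with $K$ arbitrary and $N \in \mathfrak{sl}_2(\F)$. Since $\car(\F)=2$, $\mathfrak{sl}_2(\F)$ contains $I_2$.

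\textbf{Step 1: $C=0$, i.e.\ $G$ is invariant.} Assume $C \neq 0$ and pick $y \in G$ with $z:=Cy \neq 0$. I would study the matrices $u+w$ with $w$ in the hurdle part, choosing $K$ of rank one so that $G':=\F y$ together with $H/G$ spans a $3$-dimensional subquotient on which $u+w$ acts. On that subquotient the action is given by a $3 \times 3$ matrix whose first column (coming from $z$) is fixed and nonzero, while the $2 \times 2$ lower-right block ranges over $D+\mathfrak{sl}_2(\F)$ and the first row can be adjusted through $K$.
\begin{itemize}
\item If $z$ is cyclic for $D+N$ for a suitable $N$, the resulting $3\times 3$ matrices realize companion-type forms with prescribed characteristic polynomial, in particular $(t-\alpha)q(t)$ with $q$ irreducible of degree $2$. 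Such a $q$ exists because $\F$ is not quadratically closed.
\item Otherwise I would use a characteristic polynomial argument. Take $w$ depending linearly on a parameter $\lambda\in\F$. The condition that $u+w$ be triangularizable for all $\lambda$ gives a polynomial identity in $\lambda$, and $|\F|\geq 4$ lets one pass from "for all $\lambda \in \F$" to an identity of polynomials. The contradiction then comes from the degree-$2$ part in $\lambda$.
\end{itemize}
This step is the main obstacle. The delicate point is ensuring that the $3\times 3$ subquotient really is stable, which requires controlling $A$ and $B$ via $K$, and arranging for $C$ to appear without the rest of $A$ interfering. If that fails, I would instead use the polynomial/Vanishing Lemma approach on the whole family $u+w$.

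\textbf{Step 2: trace zero on $H/G$.} Once $C=0$ is known for all $u\in\calT$, every $u+w$ is block upper-triangular, and triangularizability of $u+w$ forces triangularizability of $D+N$ for every $N \in \mathfrak{sl}_2(\F)$. Suppose $a:=\tr D \neq 0$. Then $D+\mathfrak{sl}_2(\F)$ is exactly the set of matrices with trace $a$. Because $\F$ is perfect of characteristic $2$ and not quadratically closed, there is an irreducible Artin--Schreier polynomial $t^2+t+c$, and hence the irreducible polynomial $t^2+at+a^2c$. Its companion matrix has trace $a$, so it lies in $D+\mathfrak{sl}_2(\F)$, and it is not triangularizable. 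This contradiction gives $\tr D=0$, completing the claim.
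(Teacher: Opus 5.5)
\textbf{Step 1 has a genuine gap.} Your Step 2 is fine and is essentially the paper's final step; the paper phrases it as $\F D+\mathfrak{sl}_2(\F)=\Mat_2(\F)$ being weakly triangularizable, contradicting Theorem \ref{theo:majodim}. Step 2 does not even need Step 1: with $K=-B$ the span $E$ of the last two basis vectors is stable, and $D+N$ is the restriction to it.

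Step 1 is the real content of the claim. The paper gets it from the Special Erasure Lemma (lemma 4.2 of \cite{dSPtriangularizable}), via the transposition argument of Lemma \ref{lemma:invariantsubspace}. Your direct plan does not replace that lemma:
\begin{enumerate}[(1)]
\item The hurdle elements vanish on $G$, so they never change the block $A$. For an arbitrary $y\in G$ with $Cy\neq 0$, the vector $(u+w)(y)=Ay+Cy$ has a $G$-component $Ay$ that no choice of $K$ controls. Hence $\F y\oplus E$ is not stable, and there is no $3\times 3$ subquotient to work with.
\item Your fallback fails as well. Splitting of the characteristic polynomial is not a polynomial condition on its coefficients, so ``$u+w$ triangularizable for all $\lambda$'' gives no polynomial identity in $\lambda$. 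The Vanishing Lemma, which concerns zero sets of polynomial functions such as minors, has nothing to act on.
\end{enumerate}

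\textbf{The missing idea} is to choose $y$ through invariant subspaces of $A$.
\begin{enumerate}[(1)]
\item Taking $K=-B$ shows that $E$ is stable and that the map induced on $H/E\simeq G$ is $A$. So $A$ is triangularizable.
\item Let $W'$ be the largest $A$-invariant subspace of $\Ker C$; it is proper because $C\neq 0$. Choose $y\in G$ whose class is an eigenvector of the map induced by $A$ on $G/W'$, say $Ay\in ay+W'$. Maximality of $W'$ forces $z:=Cy\neq 0$.
\item Now choose $K$ so that $B+K$ maps $E$ into $\F y$. Then $W'$ and $W'\oplus\F y\oplus E$ are both stable under $u+w$. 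Take a basis $(\overline{y},z,e)$ of the quotient, with $(z,e)$ a basis of $E$. The induced matrix is
\[
\begin{bmatrix} a & \beta_1 & \beta_2 \\ 1 & * & * \\ 0 & \gamma & d \end{bmatrix}.
\]
Here $\beta_1,\beta_2$ are arbitrary, and the lower-right block ranges over a coset of $\mathfrak{sl}_2(\F)$, so you can arrange $\gamma\neq 0$.
\item Once the lower-right block is fixed, with characteristic polynomial $\chi$, the characteristic polynomial of this matrix is $(t+a)\chi(t)+\beta_1(t+d)+\beta_2\gamma$. So every monic cubic with the fixed trace is reached, for instance $(t+\alpha)q(t)$ with $q$ irreducible of degree $2$. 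This is a contradiction.
\end{enumerate}
With the subquotient chosen this way, your first bullet always applies and the second bullet is unnecessary. Without this choice of $W'$ and $y$, however, Step 1, and with it the invariance of $G$, remains unproved.
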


This result is a consequence of a result featured in \cite{dSPtriangularizable} and called the Special Erasure Lemma
(lemma 4.2 there):

\begin{lemma}[Special Erasure Lemma]\label{lemma:specialerasure}
Let $C \in \Mat_{n-2,2}(\F)$ and $D \in \Mat_{n-2}(\F)$.
Assume that for every $A \in \mathfrak{sl}_2(\F)$ and every $B \in \Mat_{2,n-2}(\F)$, the matrix
$$\begin{bmatrix}
A & B \\
C & D
\end{bmatrix}$$
is triangularizable. Then $C=0$.
\end{lemma}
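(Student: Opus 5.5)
We argue by contradiction: assuming $C\neq 0$, we will choose $A\in\mathfrak{sl}_2(\F)$ and $B$ so that the matrix
$$M=\begin{bmatrix} A & B \\ C & D\end{bmatrix}$$
has an invariant subspace on which it induces an endomorphism whose characteristic polynomial is an irreducible quadratic. Such an $M$ is not triangularizable. Write $(e_1,e_2)$ for the first two standard basis vectors of $\F^n$ and $(f_1,\dots,f_{n-2})$ for the last $n-2$.

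The first step is a normalization. Conjugating $M$ by $\operatorname{diag}(P,Q)$ with $P\in\GL_2(\F)$ and $Q\in\GL_{n-2}(\F)$ transforms $A\mapsto PAP^{-1}$, $B\mapsto PBQ^{-1}$, $C\mapsto QCP^{-1}$ and $D\mapsto QDQ^{-1}$. Since $\mathfrak{sl}_2(\F)$ is invariant under conjugation and $B$ is arbitrary, the hypothesis is preserved, and so is the conclusion $C=0$. Taking $A=0$ and $B=0$ in the hypothesis, the block lower-triangular matrix with diagonal blocks $0$ and $D$ is triangularizable, hence $D$ is triangularizable. We choose $Q$ so that $D$ becomes upper-triangular, i.e.\ $Df_i\in\Vect(f_1,\dots,f_i)$ for all $i$.

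Now assume $C\neq 0$. Pick $j\in\{1,2\}$ with $Ce_j\neq 0$, and let $k$ be the largest index such that the $f_k$-coordinate $c$ of $Ce_j$ is nonzero. Set $U:=\Vect(f_1,\dots,f_{k-1})$. We choose $B$ with $Bf_i=0$ for $i\neq k$ and $Bf_k=\beta e_j$, where $\beta\in\F$ is to be fixed later. We take $A\in\mathfrak{sl}_2(\F)$ diagonal in the basis $(e_1,e_2)$, with $Ae_j=ae_j$ and trace zero. Then:
\begin{itemize}
\item $U$ is $M$-invariant, because $D$ preserves it and $B$ kills it.
\item $Me_j\equiv ae_j+cf_k \pmod U$.
\item $Mf_k\equiv \beta e_j+df_k \pmod U$, where $d$ is the $(k,k)$ diagonal entry of $D$.
\end{itemize}
So $U\oplus\Vect(e_j,f_k)$ is $M$-invariant, and on the quotient by $U$ the matrix $M$ induces an endomorphism with characteristic polynomial
$$t^2-(a+d)t+(ad-c\beta).$$
Since $c\neq 0$, the constant term can be made any element of $\F$ by varying $\beta$, while $a$ is free.

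It remains to show that some choice of $a$ and $\beta$ makes this polynomial irreducible. Because $\F$ is not quadratically closed, there is an irreducible monic polynomial $t^2+pt+q$. If $p\neq 0$, we choose $a$ with $a+d\neq 0$ (possible since $|\F|>2$), so that $-(a+d)\neq 0$. Substituting $t\mapsto \lambda t$ and rescaling, the irreducible polynomial $t^2+pt+q$ becomes $t^2+\lambda^{-1}p\,t+\lambda^{-2}q$, which is still irreducible. Taking $\lambda=-p/(a+d)$ gives an irreducible polynomial whose linear coefficient is $-(a+d)$, and we then choose $\beta$ to match its constant term. If $p=0$ (which cannot happen in characteristic $2$ for a perfect field), we choose $a=-d$ and match $q$ directly.

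With this choice, the restriction of $M$ to $U\oplus\Vect(e_j,f_k)$ induces a non-triangularizable endomorphism on its quotient by $U$. A triangularizable endomorphism induces triangularizable endomorphisms on invariant subspaces and on their quotients, so $M$ is not triangularizable. This contradicts the hypothesis, and therefore $C=0$.

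The main obstacle I expect is the bookkeeping needed to get a two-dimensional invariant piece on which both $A$ and $D$ act diagonally modulo $U$. This is exactly why we first triangularize $D$ and use the last nonzero coordinate of $Ce_j$. The characteristic-$2$ subtlety, that $t^2+q$ always splits over a perfect field, is handled by forcing the trace $a+d$ to be nonzero.
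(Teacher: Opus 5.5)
Your proof is correct. It also differs from the paper in a simple way: the paper gives no argument for this lemma and imports it from lemma 4.2 of \cite{dSPtriangularizable}, whereas you give a self-contained proof.

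Your construction works as stated. You triangularize $D$, choose a nonzero column $Ce_j$, and let $c$ be its last nonzero coordinate, at $f_k$. Then $U=\Vect(f_1,\dots,f_{k-1})\subseteq U\oplus\Vect(e_j,f_k)$ is an $M$-invariant flag. The induced $2$-dimensional subquotient has matrix $\begin{bmatrix} a & \beta \\ c & d\end{bmatrix}$. Since triangularizability passes to invariant subspaces and to quotients, an irreducible characteristic polynomial for this block contradicts the hypothesis.

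The rescaling detour in your last step is unnecessary, because $a$ is unconstrained. The linear coefficient $-(a+d)$ already takes every value in $\F$. So for any irreducible $t^2+pt+q$ you can set $a:=-p-d$ and then solve $ad-c\beta=q$ for $\beta$, using $c\neq 0$. Likewise, choosing $a\neq -d$ needs only $|\F|\geq 2$, not $|\F|>2$.

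Your route makes two things visible that the citation hides:
\begin{itemize}
\item The lemma holds over any non-quadratically closed field, with no perfectness or characteristic assumption.
\item The hypothesis is only needed for diagonal trace-zero $A$ (scalar in characteristic $2$) and for $B$ with a single nonzero entry.
\end{itemize}
The paper's citation keeps the exposition short, at the cost of leaving this step to an external reference.
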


Here is a consequence of Lemma \ref{lemma:specialerasure} in terms of hurdles:

\begin{lemma}[Invariant Subspace Lemma for Hurdles]\label{lemma:invariantsubspace}
Let $W$ be a vector space with dimension $m \geq 2$, and let
$\calV \subseteq \End(W)$ be a hurdle, and $W'$ be a linear subspace of $W$ with codimension $2$ such that
$\calV$ contains all the trace zero tensors $f \otimes x$ with $f_{|W'}=0$ and $x \in W$.
Assume that $\calV$ is weakly triangularizable.
Then $W'$ is $\calV$-invariant.
\end{lemma}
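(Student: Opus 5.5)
Choose a basis $\bfB=(e_1,\dots,e_m)$ of $W$ that is adapted to $W'$. Put the two complementary vectors first, so that $W'=\Vect(e_3,\dots,e_m)$. This ordering is the reverse of the one used in the definition of a hurdle, and it matches the block shape in Lemma \ref{lemma:specialerasure}.

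In this basis, the trace zero tensors $f\otimes x$ with $f_{|W'}=0$ are exactly the trace zero operators represented by matrices whose last $m-2$ columns vanish. By linearity, $\calV$ therefore contains every matrix of the form
$$\begin{bmatrix}
A & [0]_{2\times(m-2)} \\
C' & [0]_{(m-2)\times(m-2)}
\end{bmatrix}\quad\text{with } A\in\mathfrak{sl}_2(\F),\ C'\in\Mat_{m-2,2}(\F).$$
Note that the trace of such a matrix is $\tr A$, so the trace condition reduces to $A\in\mathfrak{sl}_2(\F)$. Showing that $W'$ is $\calV$-invariant amounts to showing that every $v\in\calV$ has a matrix whose upper-right $2\times(m-2)$ block is zero.

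A direct application of Lemma \ref{lemma:specialerasure} does not work. That lemma frees the upper-right block $B$ and erases the lower-left block $C$, whereas here the lower-left block is the free one. The fix is to pass to the transpose, equivalently to the dual space. Let $v\in\calV$ be represented by
$$M=\begin{bmatrix} A_0 & B_0 \\ C_0 & D_0\end{bmatrix}.$$
Since $\calV$ is a weakly triangularizable linear subspace, every matrix $M+N$ with $N$ in the family above is triangularizable. Hence every matrix
$$\begin{bmatrix} A & B_0 \\ C & D_0\end{bmatrix},\quad A\in\mathfrak{sl}_2(\F),\ C\in\Mat_{m-2,2}(\F),$$
is triangularizable: take $N$ with blocks $A-A_0$ and $C-C_0$, using that $A_0-(A-A_0)$-type adjustments stay in $A_0+\mathfrak{sl}_2(\F)$ exactly when $\tr A=\tr A_0$. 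So I will first subtract from $A$ the trace component, i.e. work with $A\in A_0+\mathfrak{sl}_2(\F)$.

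Transposition preserves triangularizability. It sends the family to
$$\begin{bmatrix} A^T & C^T \\ B_0^T & D_0^T\end{bmatrix},$$
where $C^T$ ranges over all of $\Mat_{2,m-2}(\F)$ and $A^T$ ranges over $A_0^T+\mathfrak{sl}_2(\F)$. This is the shape in Lemma \ref{lemma:specialerasure}, with the fixed lower-left block $B_0^T$.

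The main obstacle is the mismatch in the top-left block: the lemma needs $A$ to range over $\mathfrak{sl}_2(\F)$ itself, not over a coset $A_0^T+\mathfrak{sl}_2(\F)$. I would handle it as follows.
\begin{itemize}
\item In characteristic $2$ we have $\tr I_2=0$, so $I_2\in\mathfrak{sl}_2(\F)$. If $\tr A_0=0$, the coset is just $\mathfrak{sl}_2(\F)$ and the lemma applies directly.
\item In general, I would try to replace $v$ by $v-\lambda u_0$, where $u_0\in\calV$ has top-left block of trace $1$. But such a $u_0$ need not exist.
\item Instead, I would apply the lemma's conclusion in a version valid for the coset. Shifting by scalars is harmless: $M$ is triangularizable if and only if $M+\lambda I_m$ is. The cleaner route, though, is to check that the proof of the Special Erasure Lemma only uses the freedom in $A$ modulo a fixed matrix. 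If needed, I would reprove it for cosets: for a fixed nonzero column of $C$, choose $A$ and $B$ so that a $2\times 2$ block acquires an irreducible characteristic polynomial $t^2+t+\beta$. Such a polynomial exists since $\F$ is not quadratically closed, and the coefficient of $t$, which is the trace, can be adjusted by the fixed trace of $A_0$ together with the free entries coupled through $C$.
\end{itemize}
This step is where I expect the real work to lie.

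Once the erasure conclusion $B_0^T=0$ is obtained, we get $B_0=0$. Hence $v(W')\subseteq W'$ for every $v\in\calV$, so $W'$ is $\calV$-invariant.
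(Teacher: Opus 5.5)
There is a genuine gap, and it is exactly the step you flag as the real work. You never show that the upper-left $2\times 2$ block $A_0$ of an element $v\in\calV$ has trace zero. You therefore only know triangularizability for $A$ ranging over the coset $A_0+\mathfrak{sl}_2(\F)$, and Lemma \ref{lemma:specialerasure} cannot be invoked. Neither of your workarounds closes this:
\begin{itemize}
\item Shifting by $\lambda I_m$ does not help in characteristic $2$. It adds $\lambda I_2\in\mathfrak{sl}_2(\F)$ to the top-left block, so that block's trace is unchanged.
\item The \emph{coset version} of the Special Erasure Lemma is only gestured at. You never say which entries are adjusted, nor why an irreducible $2\times 2$ diagonal block would obstruct triangularizability of the whole matrix when the off-diagonal blocks are nonzero.
\end{itemize}
As written, the argument is incomplete.

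The missing idea is that the bad case never occurs. Your remark that an element with top-left trace $1$ \emph{need not exist} is in fact always true, and that is what you must prove. Use the linearity of $\calV$. For $\lambda\in\F$ and $A\in\mathfrak{sl}_2(\F)$, the space $\calV$ contains $\lambda v$ plus the element whose only nonzero blocks are $A$ (top-left) and $-\lambda C_0$ (lower-left). Its matrix
\[
\begin{bmatrix} \lambda A_0+A & \lambda B_0 \\ [0]_{(m-2)\times 2} & \lambda D_0\end{bmatrix}
\]
is block upper-triangular, so $\lambda A_0+A$ is triangularizable for all such $\lambda$ and $A$. If $\tr A_0\neq 0$, then $\F A_0+\mathfrak{sl}_2(\F)=\Mat_2(\F)$ would be weakly triangularizable. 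This is false because $\F$ is not quadratically closed: the companion matrix of an irreducible monic quadratic is not triangularizable. Hence $\tr A_0=0$, and the coset is $\mathfrak{sl}_2(\F)$ itself. Your transposition step then applies Lemma \ref{lemma:specialerasure} verbatim and yields $B_0=0$.

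This is precisely the paper's proof, which works directly in the transposed matrix space containing $\mathfrak{sl}_2(\F)\vee\{0_{m-2}\}$. With this step inserted, the rest of your argument is correct: the choice of basis, the description of the trace-zero tensors vanishing on $W'$, and the reduction to the upper-right block.
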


\begin{proof}[Proof of Lemma \ref{lemma:invariantsubspace}]
Let us choose a basis $(e_1,\dots,e_m)$ of $W$ in which the last $m-2$ vectors span $W'$,
and let us consider the matrix space $\calN$ that represents $\calV$ in that basis.
Then $\calN^T$ is weakly triangularizable and it includes $\mathfrak{sl}_2(\F) \vee \{0_{n-2}\}$.
Let $M \in \calN^T$, which we write
$M=\begin{bmatrix}
A' & B' \\
C' & D'
\end{bmatrix}$. By adding appropriate matrices of $\mathfrak{sl}_2(\F) \vee \{0_{n-2}\}$, we deduce that 
$\begin{bmatrix}
\lambda A'+A & [0]_{2 \times (n-2)} \\
C' & D'
\end{bmatrix}$ is weakly triangularizable for all $\lambda \in \F$ and all $A \in \mathfrak{sl}_2(\F)$, and
by extracting we deduce that $\F A'+\mathfrak{sl}_2(\F)$ is weakly triangularizable.
If $\tr(A') \neq 0$, this leads to having $\Mat_2(\F)$ weakly triangularizable, thereby contradicting the assumption that $\F$ is not quadratically closed.
Hence $\tr(A')=0$, and by adding appropriate matrices of $\mathfrak{sl}_2(\F) \vee \{0_{n-2}\}$
we deduce that $\begin{bmatrix}
A & B \\
C' & D'
\end{bmatrix}$ is triangularizable for all $A \in \mathfrak{sl}_2(\F)$ and all $B \in \Mat_{2,n-2}(\F)$.
Applying Lemma \ref{lemma:specialerasure} we deduce that $C'=0$. By varying $M$ and coming back to $\calN$,
this shows that $W'$ is $\calV$-invariant.
\end{proof}

Let us now prove Claim \ref{claim:tracezero}.

\begin{proof}[Proof of Claim \ref{claim:tracezero}]
Lemma \ref{lemma:invariantsubspace} applies to $\calT$, which yields that $G$ is $\calT$-invariant.
Now, we extend a basis $(e_1,\dots,e_{n-3})$ of $G$ into a basis of $(e_1,\dots,e_{n-1})$ of $H$,
and we denote by $\calM$ the matrix space associated with $\calT$ in that basis.
Let $u \in \calT$, and denote its matrix in that basis by
$M(u)=\begin{bmatrix}
[?]_{(n-3) \times (n-3)} & [?]_{(n-3) \times 2} \\
[0]_{2 \times (n-3)} & A
\end{bmatrix}$ where $A \in \Mat_2(\F)$. Let $N \in \mathfrak{sl}_2(\F)$ and $\lambda \in \F$.
The space $\calM$ contains the matrix $0_{n-3} \oplus N$, and hence its also contains
$\lambda M(u)+(0_{n-3} \oplus N)$. It follows that $\lambda A+N$ is weakly triangularizable.
Then Theorem \ref{theo:majodim} yields that $A \in  \mathfrak{sl}_2(\F)$.
In other words, the endomorphism of $H/G$ induced by $u$ has trace zero.
\end{proof}

Next, we set
$$\calU:=\{u \in S : \; \im u \subseteq H \quad \text{and} \quad G \subseteq \Ker u\},$$
and we deduce from Claim \ref{claim:tracezero} that all the elements of $\calU$ have trace zero.

From this point on, the proof featured in sections 5.8 and 5.9 of \cite{dSPtriangularizable} can be followed word-for-word
because it only requires that $n \geq 5$, that $|\F| \geq 4$, that $\dim W_1 \leq \left\lfloor \frac{n}{2}\right\rfloor$ 
and that all the $S$-adapted vectors belong to the union
$V_1 \cup \cdots \cup V_n$ for some $2$-complex $(V_1,\dots,V_n)$ of $V$.
We can then directly jump to the conclusion, which states that one of the following two outcomes holds.

\begin{itemize}
\item \textbf{Case 1.} There exists a linear subspace $G'$ of $V$ with codimension $2$ such that $G' \not\subseteq H$
and $S$ contains all the trace zero endomorphisms of $V$ that vanish on $G'$ and map into $H$.

\item \textbf{Case 2.} There is a basis of $V$ in which the matrix space that represents $S$ contains all the matrices of the form
$$\begin{bmatrix}
0 & 0 & 0 & [0]_{1 \times (n-3)} \\
a & \lambda & y & [0]_{1 \times (n-3)} \\
b & x & \lambda & [0]_{1 \times (n-3)} \\
c & b & a & [0]_{1 \times (n-3)} \\
[?]_{(n-4) \times 1} &  [?]_{(n-4) \times 1} &  [?]_{(n-4) \times 1} & [0]_{(n-4) \times (n-3)}
\end{bmatrix}.$$
where $a,b,c,x,y,\lambda$ belong to $\F$ (and the question marks represent arbitrary blocks).
\end{itemize}

In Case 1, the Second Confinement Lemma (to be proved in the next section) states that either $S$
is a hurdle or there is a linear hyperplane $H'$ of $V$ such that all the vectors of $V$ that are not $S$-adapted belong to
$G' \cup H \cup H'$; in the second stated outcome, we would obtain $V=V_1 \cup V_2 \cup \cdots \cup V_n \cup G' \cup H \cup H'$,
which would contradict the Covering Lemma applied to $p=3$: indeed, as $n \geq 4$ there is no linear hyperplane of $V$ among the $V_i$'s, and
among $V_1,\dots,V_n,G',H,H'$ there are at most three linear subspaces with codimension $2$. Hence in Case 1 the space $S$ is a hurdle.

Finally, in Case 2 the Third Confinement Lemma (to be proved in the next section) yields two linear hyperplanes $H_1$
and $H_2$ of $V$ such that all the vectors of $V$ that are not $S$-adapted belong to the union $H_1 \cup H_2$.
Again, this would contradict the Covering Lemma.

As a conclusion, $S$ is a hurdle, and our inductive proof is now complete (or at least will be once the Second and Third Confinement Lemma are proven).

\section{The Second and Third Confinement Lemmas}

Here, we will state and prove the two confinement lemmas that were used in the very end of the proof of Theorem \ref{theo:adaptedvectorsrefined}.
Their proofs are of course independent of the induction process used there.
Beforehand, we collect several basic results that will be used in both proofs.

Interestingly, all the results we will prove here are also valid over fields with cardinality $2$.

\subsection{A lemmas on polynomials with degree $3$}

\begin{lemma}\label{lemma:polylemma}
Let $\F$ be a perfect field with characteristic $2$, but not a quadratically closed one, and let $\delta \in \F$ and
$p \in \F^\times$. Then there exists a scalar $q \in \F$
such that the polynomial $t^3+pt+q$ does not split over $\F$ and does not have $\delta$ among its roots.
\end{lemma}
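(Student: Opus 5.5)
The plan is to reduce the statement to a construction of non-split cubics with a prescribed root. The condition ``$\delta$ is a root'' excludes only the single value $q=\delta^3+p\delta$, so it suffices to find either an \emph{irreducible} polynomial $t^3+pt+q$ (then no element of $\F$, in particular not $\delta$, is a root), or two values of $q$ giving non-split polynomials with distinct roots in $\F$. Throughout I use that we are in characteristic $2$ and that $\F$ is perfect, so every element of $\F$ is a square.

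\textbf{Finite fields: a fibre count.} Put $g(t):=t^3+pt$. Then $t^3+pt+q$ has a root in $\F$ iff $q\in g(\F)$, and the number of roots in $\F$ is the size of the fibre $g^{-1}(q)$.
\begin{itemize}
\item A repeated root forces a factorization $(t+a)^2(t+b)$. Comparing the $t^2$-coefficient gives $b=0$ in characteristic $2$, then $a^2=p$ and $q=0$.
\item Hence $g^{-1}(0)=\{0,\sqrt p\}$ has size $2$, and every other fibre has size $0$, $1$ or $3$.
\item The non-split $q$'s are exactly those with fibre of size $0$ (irreducible cubic) or of size $1$ (a root times an irreducible quadratic).
\end{itemize}
If there were no $q$ with empty fibre, then $g$ would be surjective on the finite set $\F$, hence injective, which contradicts $g(0)=g(\sqrt p)$. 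So some $q$ gives an irreducible cubic, and that $q$ works for every $\delta$. This settles all finite $\F$, including $\F_2$.

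\textbf{Infinite fields: building the cubic with a prescribed root.} For $a\in\F^\times$, expand in characteristic $2$:
$$(t+a)(t^2+at+a^2+p)=t^3+pt+a(a^2+p).$$
The substitution $t=as$ turns the quadratic factor into $s^2+s+1+p/a^2$. Write $\wp(\F):=\{s^2+s : s\in\F\}$. The quadratic factor is irreducible iff $1+p/a^2\notin\wp(\F)$.
\begin{itemize}
\item Since $\F$ is perfect and not quadratically closed, every quadratic extension is separable, hence Artin--Schreier. So $\wp(\F)$ is a proper additive subgroup of $\F$.
\item Its complement is a nonempty union of cosets, each of which is infinite.
\item Because $\F$ is perfect, $p/a^2$ ranges over all of $\F^\times$ as $a$ ranges over $\F^\times$.
\end{itemize}
So I pick two distinct $c_1,c_2\notin\wp(\F)\cup\{1\}$ and solve $1+p/a_i^2=c_i$. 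This gives $a_1\neq a_2$, because the squares $a_i^2$ are distinct and squaring is injective in characteristic $2$. Each $q_i:=a_i(a_i^2+p)$ yields a non-split cubic whose only root in $\F$ is $a_i$. At least one $a_i$ differs from $\delta$, and the corresponding $q_i$ works.

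The main obstacle is the infinite case: counting is unavailable there, and one needs a structural reason why non-split cubics of the form $t^3+pt+q$ exist at all. This is exactly where non-quadratic-closedness enters, through the Artin--Schreier description of quadratic extensions. The fibre count shows that finite fields are easier, which matches the paper's remark that Lemma \ref{lemma:polylemma} has a simpler proof for finite fields.
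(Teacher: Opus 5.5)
Your proof is correct and follows the paper's route. In the finite case you use the same argument: $t\mapsto t^3+pt$ is not injective, hence not surjective. In the infinite case you use the same family of non-split cubics: your $(t+a)(t^2+at+a^2+p)$ is exactly the paper's rescaled $(t+1)(t^2+t+r)$ with $a=u^{-1}$, and it rests on the same fact that $\F\setminus\{s^2+s : s\in\F\}$ is infinite. Your way of avoiding $\delta$, by taking two parameters whose unique roots in $\F$ differ, is slightly cleaner than the paper's exclusion of the roots of an auxiliary cubic in $r$; the fibre-size analysis in your finite case is correct but not needed.
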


\begin{proof}
Assume first that $\F$ is finite.
The mapping $f : x \in \F \mapsto x^3+px$ is not injective :
indeed because $\F$ is perfect we can choose $x \in \F^\times$ as a square root of $p$,
and note that $f(x)=f(0)$. Since $\F$ is finite, it follows that $f$ is nonsurjective, and taking $q \in \F$ outside of its range
yields that $t^3+pt+q$ is irreducible, and of course it satisfies the conclusion.

Assume next that $\F$ is infinite.
To start with, we take an arbitrary $r \in \F \setminus \{0,1\}$ such that $t^2+t+r$ is irreducible over $\F$ (at this point, the existence of such elements is irrelevant).
We consider the polynomial $(t+1)(t^2+t+r)=t^3+(r+1)t+r$. Since $\F$ is perfect we have a unique $u \in \F^\times$ such that $r+1=pu^2$, and hence
$(t+1)(t^2+t+r)=u^3((t/u)^3+p (t/u)+q)$ for $q:=r u^{-3}$.
Hence $t^3+pt+q$ does not split over $\F$, and it yields a solution unless $\delta$ is a root of it.
The latter holds only if $q=\delta^3+p \delta$, that is $r^2=u^6(\delta^3+p \delta)^2=(r+1)^3 p^{-3} \delta^2(\delta^2+p)^2$.
If $\delta^2=p$ or $\delta =0$ this condition can never hold since $r \neq 0$. Assume now that $\delta^2 \not\in \{0,p\}$.
Then $r^2=(r+1)^3 p^{-3} \delta^2(\delta^2+p)^2$ amounts to $r$ being a root of
a polynomial with degree $3$ whose entries depend only on $\delta$ and $p$, and we denote the set of those roots by $\calZ$.
In order to conclude it suffices to prove that there exists $r \in \F \setminus (\{0,1\} \cup \calZ)$ such that $t^2+t+r$ is irreducible.
In fact, there are infinitely many $r \in \F$ such that $t^2+t+r$ is irreducible. Indeed, the set of all such $r$'s is the complementary subset of the range of
$\calP : x \in \F \mapsto x^2+x$; moreover $\calP$ is an endomorphism of the group $(\F,+)$, and it is nonsurjective because $\F$
is perfect with characteristic $2$ but not quadratically closed. Hence $\im \calP$ is a proper subgroup of $\F$, and as $\F$
is infinite $\F \setminus \im \calP$ is infinite itself. Hence there exists $r \in \F \setminus \im \calP$ outside of the finite set $\{0,1\} \cup \calZ$,
and by taking such a scalar $r$ we obtain the claimed result.
\end{proof}

\subsection{A lemma on $3$-by-$3$ matrices}

The following lemma is critical to our proofs of the Second and Third Confinement Lemma.

\begin{lemma}\label{lemma:3by3}
Let $\F$ be a perfect field with characteristic $2$, but not a quadratically closed one.
Let $A \in \Mat_3(\F)$.
Assume that the sum $A+(N \oplus 0_1)$ is triangularizable for all $N \in \mathfrak{sl}_2(\F)$.
Then $A$ has one of the following forms:
$$A=\begin{bmatrix}
* & * & 0 \\
* & * & 0 \\
* & * & *
\end{bmatrix} \quad \text{or} \quad A=\begin{bmatrix}
* & * & * \\
* & * & * \\
0 & 0 & *
\end{bmatrix}.$$
\end{lemma}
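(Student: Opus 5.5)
Write $A=\begin{bmatrix} A_1 & C \\ R & d\end{bmatrix}$ with $A_1\in\Mat_2(\F)$, $C\in\Mat_{2,1}(\F)$, $R\in\Mat_{1,2}(\F)$ and $d\in\F$. The conclusion says exactly that $C=0$ or $R=0$. I will argue by contradiction: assume $C\neq 0$ and $R\neq 0$, then produce some $N\in\mathfrak{sl}_2(\F)$ for which $A+(N\oplus 0_1)$ has a characteristic polynomial that does not split. A cubic over $\F$ splits iff the matrix is triangularizable, so this would finish the proof.

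\textbf{Step 1: normalize $A$.} Conjugating by $P\oplus 1$ with $P\in\GL_2(\F)$ preserves both the hypothesis (since $\mathfrak{sl}_2(\F)$ is stable under conjugation) and the two target shapes. This lets me put $C$ and $R$ into a convenient position. Since $R\neq 0$, I can assume $R=\begin{bmatrix}1 & 0\end{bmatrix}$ up to also conjugating by a scalar on the last coordinate. The remaining stabilizer then acts on $C$, so either $C=\begin{bmatrix}0\\ c\end{bmatrix}$ with $c\neq 0$ (after using transvections fixing $R$), or $C=\begin{bmatrix}c\\0\end{bmatrix}$ with $c\ne0$. These are the cases $RC=0$ and $RC\neq 0$.

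I will also translate $A$ by a scalar matrix, which changes nothing. Adding a suitable $N_0\in\mathfrak{sl}_2(\F)$ replaces $A_1$ by any matrix with the same trace. So the only remaining data are $\tau:=\tr A_1$, $d$, and $c$, and I may take $A_1$ to be whatever is convenient, for instance diagonal or companion-like.

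\textbf{Step 2: compute the characteristic polynomial.} With $N=\begin{bmatrix}\alpha&\beta\\ \gamma&\alpha\end{bmatrix}$ (trace zero in characteristic $2$), I compute $\chi(t)=\det(tI_3-A-N\oplus0)$ as a polynomial in $\alpha,\beta,\gamma$. The block-determinant formula gives
$$\chi(t)=(t-d)\det(tI_2-A_1-N)-R\,\mathrm{adj}(tI_2-A_1-N)\,C.$$
The free parameters $\alpha,\beta,\gamma$ enter $\det(tI_2-A_1-N)$ through its constant term, so that part is essentially arbitrary. The correction term is linear in $t$ and in the parameters, with coefficients involving $c$. I will first shift $t$ to kill the $t^2$ coefficient, which is possible in characteristic $2$ only when the relevant trace allows it. Otherwise I use the substitution $t\mapsto t+\text{const}$ to make the cubic depressed, $t^3+pt+q$. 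The goal is to show that, as $N$ varies, $p$ can be made an arbitrary nonzero value and $q$ can be adjusted independently, or at least along a line not forced into $\{x^3+px\}$.

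\textbf{Step 3: conclude.} Then Lemma~\ref{lemma:polylemma} supplies $q$ such that $t^3+pt+q$ does not split, which gives the contradiction. If in some degenerate subcase only a quadratic factor can be controlled, I will instead make $\chi$ equal to $(t-\delta)(t^2+t+r)$ with $t^2+t+r$ irreducible. Such an $r$ exists because $x\mapsto x^2+x$ is not surjective, as $\F$ is not quadratically closed. The $\delta$-avoidance clause of Lemma~\ref{lemma:polylemma} is what I expect to need when one parameter is pinned to force a root at a specific value.

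\textbf{Main obstacle.} The hard part is the bookkeeping in Step 2. I need to check that the map $(\alpha,\beta,\gamma)\mapsto(p,q)$ really hits a non-split pair in both cases ($RC=0$ and $RC\ne0$), in characteristic $2$ where the depression trick behaves differently. I expect the case $RC=0$, where $c$ enters only through the $t^0$ coefficient, to need the careful choice given by Lemma~\ref{lemma:polylemma}.
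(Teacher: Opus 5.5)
\textbf{Verdict: the route is sound and differs from the paper's, but the proposal has a gap: the decisive step is announced, not done.}

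Your Step 1 reduction is correct. Conjugation by $P\oplus 1$, translation by scalars and translation by $\mathfrak{sl}_2(\F)$ all preserve both the hypothesis and the conclusion. The stabilizer of $R=(1,0)$ does bring $C$ to $(c,0)^T$ or $(0,c)^T$.

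The proposal then stops where the proof actually lies. Step 2 is only announced, and you never establish, in either case, that some $A+(N\oplus 0_1)$ has a non-split characteristic polynomial; you say yourself that this remains to be checked.

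Part of what you announce for that step is also inaccurate:
\begin{enumerate}[(i)]
\item There is no characteristic-$2$ obstruction to depressing a cubic. The substitution $t\mapsto t+s$ changes the $t^2$-coefficient by $3s=s$, or you can simply replace $A$ by $A+(\tr A)I_3$.
\item In the case $RC=0$ the constant term cannot be adjusted independently. For each $p$, one value of $q$ is missed (the one putting $d$ among the roots). This is exactly why the avoidance clause of Lemma~\ref{lemma:polylemma} is needed.
\item Your fallback $(t-\delta)(t^2+t+r)$ ignores that the $t^2$-coefficient of $\chi$ is pinned at $\tr A$.
\end{enumerate}

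\textbf{How to close the gap along your lines.} With $R=(1,0)$, the matrix $B:=A_1+N=(b_{ij})$ runs over all $2\times 2$ matrices of trace $\tau$, and $R\,\mathrm{adj}(tI_2-B)=(t-b_{22},\,b_{12})$. Signs being irrelevant in characteristic $2$, this gives
\begin{align*}
\chi(t)&=(t+d)\bigl(t^2+\tau t+\det B\bigr)+c\,(t+b_{22}) && \text{if } C=(c,0)^T,\\
\chi(t)&=(t+d)\bigl(t^2+\tau t+\det B\bigr)+c\,b_{12} && \text{if } C=(0,c)^T.
\end{align*}
\begin{enumerate}[(i)]
\item In the first case, $\det B$ and $b_{22}$ are independent free parameters. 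So every monic cubic with $t^2$-coefficient $\tau+d$ is reached, for instance $(t+a)g(t)$ with $g$ an irreducible quadratic and $a$ fixed by the trace. Only the fact that $\F$ is not quadratically closed is used here.
\item In the second case, take $b_{12}\neq 0$ and $\det B$ free. You then reach every monic cubic with $t^2$-coefficient $\tau+d$ that does not vanish at $d$. Setting $u:=t+\tau+d$, these are the $u^3+pu+q$ (with $p$ free) that do not vanish at $u=\tau$. Lemma~\ref{lemma:polylemma} with $p=1$ and $\delta=\tau$ supplies a non-split one.
\end{enumerate}

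\textbf{Comparison with the paper.} Completed this way, your argument is a genuinely different and shorter route. The paper uses only $C\neq 0$: it normalizes the last column to $(0,1,?)^T$ and the trace to $0$. It must then eliminate the three parameters $\alpha,\beta,\delta$, where $R=(\alpha,\beta)$ and $A_1=\mathrm{diag}(0,\delta)$, through a case analysis: $\delta\neq 0$; $\delta=0\neq\beta$; $\delta=\beta=0$. Your two-sided normalization isolates the invariant $RC$, which is the paper's $\beta$. It makes transparent that the root-avoidance clause is needed only when $RC=0$.
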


\begin{proof}
First of all, we can always replace $A$ with $A-(\tr A)I_3$, and hence the situation is reduced to the one where $\tr A=0$.
Now we assume that $\tr A=0$. Assume also that, in the last column of $A$, at least one of the first two entries is nonzero.
Then we try to prove that in the last row of $A$ the first two entries are zero. To this end we note that
conjugating $A$ with $P \oplus I_1$ for an arbitrary $P \in \GL_2(\F)$ does not modify the assumptions.
Hence, without loss of generality we can assume that the third column of $A$ is of the form
$\begin{bmatrix}
0 \\
1 \\
?
\end{bmatrix}$, which we will do from now on.

Next, the situation is unchanged if we replace $A$ with $A-(N \oplus 0_1)$ for an arbitrary $N \in \mathfrak{sl}_2(\F)$ (which does not modify the condition on the trace of $A$),
so we can further reduce the situation to the one where
$$A=\begin{bmatrix}
0 & 0 & 0 \\
0 & \delta & 1 \\
\alpha & \beta & \delta
\end{bmatrix} \quad \text{for some $(\alpha,\beta,\delta) \in \F^3$,}$$
where we have of course used the condition that $\tr A=0$ to see that the second and third diagonal entries are equal.

Then, we take arbitrary scalars $\lambda,x,y$ in $\F$ and obtain that the matrix
$$M=\begin{bmatrix}
\lambda & y & 0 \\
x & \delta+\lambda & 1 \\
\alpha & \beta & \delta
\end{bmatrix}$$
is triangularizable over $\F$. Observing that $\tr(M)=0$ and writing the characteristic polynomial of $M$ as $t^3+c_2 t+c_3$, we compute that
$$c_2=xy+\beta+(\lambda+\delta)^2+\lambda \delta \quad \text{and} \quad
c_3=\alpha y+\delta xy+\lambda(\delta^2+\lambda \delta+\beta),$$
which we combine to find
$$c_3-\delta c_2=\alpha y+\delta \beta+\delta^3+\lambda \beta.$$
Note throughout the remainder of the proof that $t^3+c_2 t+c_3$ must split over $\F$.

Assume first that $\delta \neq 0$.
Then Lemma \ref{lemma:polylemma} applied to $p:=1$ yields $q \in \F$ such that $t^3+pt+q$ does not split over $\F$ and
does not have $\delta$ among its roots.
\begin{itemize}
\item If $\beta \neq 0$, we force $y=1$, then we adjust $\lambda$ so that $c_3-\delta c_2=q-\delta p$,
and finally we adjust $x$ so that $c_2=p$. As $t^3+c_2 t+c_3$ splits over $\F$, we deduce that $\beta=0$.
\item Then $c_3-\delta c_2=\alpha y+\delta^3$. If $\alpha \neq 0$ we take $\lambda=0$, we adjust $y$ so that
$q-\delta p=\alpha y+\delta^3$, we deduce that $y \neq 0$ (otherwise $\delta$ would be a root of $t^3+pt+q$),
and then we adjust $x$ so that $c_2=p$. Again, this yields that $t^3+pt+q$ should split. Hence $\alpha=0$.
\end{itemize}
Hence, if $\delta \neq 0$ we get the desired conclusion that $\alpha=\beta=0$.

Now, we assume that $\delta=0$ and $\beta \neq 0$, and we prove that it leads to a contradiction.
Lemma \ref{lemma:polylemma} applied to $p:=\beta$ yields a $q \in \F$ such that $t^3+pt+q$ does not split over $\F$.

If $\alpha \neq 0$ then we force $x=\lambda=0$ and find $c_2=\beta$ and $c_3=\alpha y$, and finally we can adjust $y$ so that $M$ has characteristic polynomial $t^3+pt+q$,
thereby contradicting our assumptions.
Therefore $\alpha=0$. Next, we have $c_3=\lambda \beta$ and $c_2=xy+\beta+\lambda^2$.
We can adjust $\lambda$ so that $c_3=q$, then take $y=1$ and adjust $x$ so that $c_2=p$, which yields another contradiction.

Assume finally that $\delta=0$ and $\beta=0$. Then $c_2=xy+\lambda^2$ and $c_3=\alpha y$.
Then we take an arbitrary pair $(p,q) \in \F^2$ such that $t^3+pt+q$ does not split over $\F$.
If $\alpha \neq 0$ we take $y:=\alpha^{-1} q$, $x:=0$ and $\lambda$ as a square root of $p$, so that $c_2=p$ and $c_3=q$.
Once more, we deduce that $\alpha=0$.

Hence, in any case we have proved that $\alpha=\beta=0$, which completes the proof.
\end{proof}

\subsection{Second Confinement Lemma}

We can now state and prove the second confinement lemma.

\begin{lemma}[Second Confinement Lemma]\label{lemma:confinement2}
Let $\F$ be a field that is perfect with characteristic $2$ but is not quadratically closed.
Let $V$ be an $\F$-vector space with dimension $n \geq 3$.
Let $H$ be a linear hyperplane of $V$, and $G$ be a linear subspace of $V$ with codimension $2$ such that $G \not\subseteq H$.
Let $S$ be a weakly triangularizable subspace of $\End(V)$ that contains all the trace-zero endomorphisms of $V$ that
vanish on $G$ and map into $H$.
Then one of the following conditions holds:
\begin{enumerate}[(i)]
\item $S$ is a hurdle;
\item There exists a linear hyperplane $H'$ of $V$ such that every
vector of $V$ that is not $S$-adapted belongs to the union $G \cup H \cup H'$.
\end{enumerate}
\end{lemma}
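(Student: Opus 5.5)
We work in a basis adapted to the configuration and use Lemma \ref{lemma:3by3} to pin down the shape of the elements of $S$. Since $G \not\subseteq H$, the space $G \cap H$ has codimension $3$ in $V$. Choose a basis $(e_1,\dots,e_n)$ of $V$ such that $(e_4,\dots,e_n)$ spans $G\cap H$, $e_3 \in G \setminus H$, and $(e_1,e_2,e_4,\dots,e_n)$ spans $H$. Thus $G=\Vect(e_3,\dots,e_n)$ and $H=\Vect(e_1,e_2,e_4,\dots,e_n)$. The hypothesis says that $S$ contains every trace zero operator that vanishes on $G$ and maps into $H$. In matrix terms, these are the matrices supported on the first two columns, with zero third row and with the upper-left $2\times 2$ block in $\mathfrak{sl}_2(\F)$. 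In particular $S$ contains $N\oplus 0_{n-2}$ for all $N\in\mathfrak{sl}_2(\F)$, together with every matrix whose nonzero entries lie in rows $4,\dots,n$ of the first two columns.

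The first step is to study an arbitrary $u\in S$ with matrix $M$. We add to $M$ elements of the above family, using the entries in rows $4,\dots,n$ of columns $1,2$, and we also exploit the freedom of scaling $M$ by $\lambda\in\F$. The goal is to extract from the weak triangularizability of $\F M+(\text{that family})$ a $3\times 3$ compression on $(e_1,e_2,e_3)$ to which Lemma \ref{lemma:3by3} applies, in the spirit of the proof of Lemma \ref{lemma:invariantsubspace}. Triangularizability is preserved under passing to an invariant subspace or to a quotient, so the natural plan is the following. First use the Special Erasure Lemma (Lemma \ref{lemma:specialerasure}), or a variant obtained by transposition, to show that $S$ leaves invariant a suitable subspace such as $\Vect(e_1,e_2,e_3,\dots)$ up to the $G\cap H$ part. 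Then pass to the induced $3\times 3$ matrix on $V/(G\cap H)$ or on a suitable invariant subspace. Lemma \ref{lemma:3by3} then shows that each such $3\times 3$ block $A(u)$ satisfies exactly one of two conditions: either the entries $(1,3),(2,3)$ vanish, or the entries $(3,1),(3,2)$ vanish.

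The second step is the key dichotomy. For each of the two options, the set of $u\in S$ satisfying it is a linear subspace of $S$, and their union is all of $S$. A vector space over a field cannot be the union of two proper subspaces, so one of these subspaces is all of $S$. This gives two cases.

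In the first case, every $u\in S$ has $(3,1)=(3,2)=0$ in its $3\times3$ block. Combined with the other constraints, this should show that $S$ contains all trace zero operators vanishing on some codimension $2$ subspace, or at least the trace zero tensors $\varphi\otimes x$ with $\varphi\in P:=\Vect(e_1^\star,e_2^\star)$. I would then try to upgrade this to the full hurdle condition, allowing $x$ to range over all of $V$ rather than only over $H$. I expect this to require an argument that $e_3$-components can be added, and if that upgrade fails, the failure should be recorded as outcome (ii).

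In the second case, every $u\in S$ has $(1,3)=(2,3)=0$. Then we do the confinement. Take $x\notin G\cup H\cup H'$, with $H'$ to be chosen (likely $H'=\Vect(e_1,e_2)+G\cap H$ or the kernel of a suitable form), and suppose $\varphi\otimes x\in S$ with $\varphi(x)=0$. Reading off the block constraints should force $\varphi$ to vanish on $\Vect(e_1,e_2,e_3)$. Then one applies the invariance of $G\cap H$, obtained by an Erasure-Lemma-type argument as in the First Confinement Lemma, to conclude that $\varphi=0$.

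The main obstacle is the first step: isolating a genuine $3\times 3$ triangularizable compression in which the perturbation $N\oplus 0_1$ ranges over all of $\mathfrak{sl}_2(\F)$. To get it, one must first prove the needed invariance of $G\cap H$ under $S$, or of a flag containing it, which plays the role of the Special Erasure step. After that, the remaining task is the careful bookkeeping that converts the two shapes from Lemma \ref{lemma:3by3} into either the hurdle structure or the hyperplane $H'$.
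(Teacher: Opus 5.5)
\textbf{Gap 1: the invariance of $G\cap H$ is false.} Your first step depends on $G\cap H$ being $S$-invariant, and for $n\geq 4$ it need not be. Without it there is no global $3\times 3$ compression on $V/(G\cap H)$, and the two-subspace dichotomy cannot be obtained this way.

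Take $n=4$, $G=\Vect(e_3,e_4)$, $H=\Vect(e_1,e_2,e_4)$. Let $S$ be spanned by the space $\calU_0$ of trace-zero endomorphisms vanishing on $G$ with range in $H$, together with $E_{3,4}$ (the map $e_4\mapsto e_3$).
\begin{itemize}
\item Every element of $S$ kills $e_3$, maps $e_4$ into $\F e_3$, and induces an element of $\mathfrak{sl}_2(\F)$ on $V/G$. Hence $S$ is weakly triangularizable.
\item $\dim S=6<7$, whereas a hurdle in dimension $4$ contains a $7$-dimensional space. So $S$ is not a hurdle.
\item Yet $E_{3,4}(e_4)=e_3\notin G\cap H=\F e_4$.
\end{itemize}
So no Special-Erasure-type argument can give the invariance you want.

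The paper makes no global statement about $S$. It only handles the rank-one trace-zero witnesses of non-adaptedness. Take a non-adapted $y\notin G\cup H$ with witness form $x\in V^\star$. The paper works in $S^t$ and picks a basis $(\epsilon_1,\epsilon_2)$ of $G^\circ$ with $\epsilon_1(y)=0$ and $\epsilon_2(y)=1$. Adding a transposed hypothesis tensor to the transposed witness gives $u\in S^t$ with $\epsilon_1\mapsto\epsilon_2\mapsto x$. The $3$-dimensional space $\Vect(\epsilon_1,\epsilon_2,x)$ is then invariant under $u$ plus all transposed hypothesis tensors with range in $G^\circ$. Lemma \ref{lemma:3by3} excludes $x\in(G^\circ+H^\circ)\setminus G^\circ$. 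The non-split cubic $t^3+\beta t+\alpha$ excludes $x\notin G^\circ+H^\circ$. Hence $x\in G^\circ$.

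\textbf{Gap 2: the cases are handled backwards, and the non-hurdle assumption is never used.} This holds even where your dichotomy is valid: for $n=3$, $G\cap H=\{0\}$, and Lemma \ref{lemma:3by3} does give that $S$ leaves $G$ or $H$ invariant.
\begin{itemize}
\item If $(3,1)=(3,2)=0$ throughout, then $H$ is $S$-invariant. The First Confinement argument shows every vector outside $H$ is $S$-adapted, so (ii) holds at once. Such an $S$ is never a hurdle, so looking for one in this case is futile.
\item If $(1,3)=(2,3)=0$ throughout, then $G$ is invariant. A non-adapted $x\notin G$ with witness $\varphi$ only gives $\varphi\in G^\circ$, not that $\varphi$ vanishes on $\Vect(e_1,e_2,e_3)$. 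The space of all trace-zero endomorphisms vanishing on $G$ falls in this case, satisfies the hypotheses, and has no adapted vector at all. So no argument that ignores the non-hurdle assumption can conclude.
\end{itemize}

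The missing idea is to fix $z\in G\setminus H$.
\begin{itemize}
\item If $G^\circ\otimes z\subseteq S$, then together with the hypothesis $S$ is a hurdle.
\item Otherwise $S\cap(G^\circ\otimes z)\subseteq\F(\theta\otimes z)$ for some $\theta\in G^\circ\setminus\{0\}$; set $H':=\Ker\theta$.
\item Now let $x=h+\mu z\notin G\cup H$ be non-adapted, with $h\in H$, $\mu\neq 0$, and witness $\psi\in G^\circ$. Then $\psi(h)=\psi(x)=0$, so $\psi\otimes h\in S$, so $\psi\otimes z\in S$. This forces $\psi\in\F^\times\theta$, and therefore $x\in H'$.
\end{itemize}
Note also that your candidate $\Vect(e_1,e_2)+G\cap H$ for $H'$ is just $H$.
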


\begin{proof}
We set $P:=G^\circ$ (a $2$-dimensional linear subspace of the dual space $V^\star$).
The assumptions yield that $\varphi \otimes x \in S$ for all $x \in H$ and all $\varphi \in P \cap x^\bot$.

We assume that $S$ is not hurdle, and we will construct a linear hyperplane $H'$ of $V$ such that every
vector of $V$ that is not $S$-adapted belongs to the union $G \cup H \cup H'$.
Throughout, we fix a vector $z \in G \setminus H$.

\vskip 2mm
\noindent \textbf{Step 1. The intersection $S \cap (P \otimes z)$ has dimension at most $1$.} \\
This step only uses the assumption that $\varphi \otimes x \in S$ for all $x \in H$ and all $\varphi \in P \cap x^\bot$, combined with the assumption
that $S$ is not a hurdle. If $S \cap (P \otimes z)$ has dimension $2$ then one proceeds exactly as in 
the proof of lemma 5.5 from \cite{dSPfeweigenvalues2} (step 1 there also) to deduce that $S$ is a hurdle, which we have ruled out from the start.

\vskip 2mm
\noindent \textbf{Step 2. Defining $H'$.} \\
Now, either $S \cap (P \otimes z)=\F (\theta \otimes z)$ for some $\theta \in P \setminus \{0\}$,
in which case we set $H':=\Ker \theta$, or $S \cap (P \otimes z)=\{0\}$, and then we set
$H':=H$ and take an arbitrary $\theta \in P \setminus\{0\}$ (and note that $\theta(z)=0$ in any case).

\vskip 2mm
\noindent \textbf{Step 3. A basic property of some non-$S$-adapted vectors.} \\
Let $\psi \in P$ and $x \in V \setminus (G \cup H)$ be nonzero elements
such that $\psi \otimes x \in S$ and $\psi(x)=0$.
Again, exactly as in step 3 of the proof of lemma 5.5 from \cite{dSPfeweigenvalues2}, 
we combine this with the assumption that $\varphi \otimes x \in S$ for all $x \in H$ and all $\varphi \in P \cap x^\bot$
to deduce that $x \in H'$.

Now, the main intermediate aim is to extend the previous property to all trace zero tensors $\psi \otimes x$ that belong to $S$
and in which $x \in V \setminus (G \cup H)$ and $\psi \neq 0$ (without assuming that $\psi \in P \setminus \{0\}$). 
To obtain this efficiently, we will work with the transpose operator space $S^t$,
which is the linear subspace of $\End(V^\star)$ that consists of all the transposed operators
$$u^t : f \in V^\star \mapsto  f \circ u \in V^\star$$
with $u \in S$.
Note that $S^t$ is weakly triangularizable (since $u^t$ has the same characteristic polynomial as $u$).
For $y \in V$, denote by
$$\mathfrak{i}_V(y) : f \in V^\star \mapsto f(y) \in \F$$
the element of the double-dual $V^{\star\star}$ that is naturally associated with $y$.
Recall that $y \in V \mapsto \mathfrak{i}_V(y) \in V^{\star\star}$ is a vector space isomorphism.
Thus $(f \otimes y)^t=\mathfrak{i}_V(y) \otimes f$ for all $y \in V$ and all $f \in V^\star$.

\vskip 2mm
\noindent \textbf{Step 4. Dual translation of the situation.} \\
We will write $U:=V^\star$ for greater convenience, as well as $D:=H^\circ$, and we recall that $P=G^\circ$.
Note that $D$ is a $1$-dimensional linear subspace of $V^\star$ (whereas $P$ is a $2$-dimensional subspace). The assumption $G \not\subseteq H$
translates into $D \not\subseteq P$, which has the following consequence:
the mapping $\varphi \in D^\circ \mapsto \varphi_{|P} \in P^\star$ is surjective (for details, see again step 4 in the proof of lemma 5.5 of \cite{dSPfeweigenvalues2}).

Coming back to the weakly triangularizable subspace $S^t$, we see that the
starting assumptions mean that $S^t$ contains $\varphi \otimes y$ for all $y \in P$ and all $\varphi \in D^\circ \cap y^\bot$.

\vskip 2mm
\noindent \textbf{Step 5. Preparing the final step.} \\
Let $y \in V \setminus (G \cup H)$ be non-$S$-adapted.
Setting $\varphi_0:=\mathfrak{i}_V(y)$, this translates into the existence of some vector $x \in U \setminus \{0\}$
such that $\varphi_0(x)=0$ and $\varphi_0 \otimes x \in S^t$, and the assumption on $y$ means that $\varphi_0 \not\in P^\circ \cup D^\circ$.
Our goal in the next steps is to prove that $x \in P$, and then by Step 3 we will deduce that $y\in H'$.

So, we assume that $x \not\in P$ and we show that it leads to a contradiction (this contradiction is obtained in the end of Step 7 below).
Since $\varphi_0 \not\in P^\circ$, we can choose a basis $(e_1,e_2)$ of $P$ such that
$\varphi_0(e_1)=0$ and $\varphi_0(e_2)=1$. Then $\varphi_0 \otimes x$ vanishes at $e_1$ and maps $e_2$ to $x$.

Since $\varphi \in D^\circ \mapsto \varphi_{|P} \in P^\star$ is surjective, we can find
$\psi \in D^\circ$ such that $\psi(e_1)=1$ and $\psi(e_2)=0$, and hence $\psi \otimes e_2 \in S^t$,
and $\psi \otimes e_2$ vanishes at $e_2$ and maps $e_1$ to $e_2$. Note that
$$u:=\varphi_0 \otimes x+\psi \otimes e_2,$$ 
belongs to $S^t$ and maps $e_1$ to $e_2$ and $e_2$ to $x$.
The range of $u$ is included in $\Vect(e_2,x)$, and
hence $\Vect(e_1,e_2,x)$ is invariant under $u$. We deduce that $\Vect(e_1,e_2,x)$ is also invariant under any sum of $u$ with tensors of the form
$f \otimes z$ with $z \in P$ and $f \in D^\circ$ such that $f(z)=0$ (the range of each such tensor is included in $P=\Vect(e_1,e_2)$).

\vskip 2mm
\noindent \textbf{Step 6. Proof that $x \not\in D+P$.} \\
This is a critical step where we will use the assumption that $S$ is weakly triangularizable, and the proof slightly differs from the corresponding one in 
\cite{dSPfeweigenvalues2}.

Assume that $x \in D+P$. Choose $e_3 \in D \setminus \{0\}$.
Since $x \not\in P$, we have $\Vect(e_1,e_2,x)=\Vect(e_1,e_2,e_3)$.
Denote by $A$ the matrix, in the basis $(e_1,e_2,e_3)$, of the endomorphism of $\Vect(e_1,e_2,x)$ induced by $\varphi_0 \otimes x$.
The corresponding matrices of endomorphisms induced by the sums of the tensors of the form
$\theta \otimes z$, with $z \in P$ and $\theta \in D^\circ$ such that $\theta(z)=0$, are exactly the matrices of the form
$N \oplus 0_1$ with $N \in \mathfrak{sl}_2(\F)$, and hence we can use Lemma \ref{lemma:3by3}.
Yet we note that $(\varphi_0 \otimes x)(e_2)=x$ and hence in the last row of $A$ the second coefficient is nonzero.
Hence by Lemma \ref{lemma:3by3} we find $(\varphi_0 \otimes x)(e_3) \in \F e_3$.
Yet $(\varphi_0 \otimes x)(e_3)=\varphi_0(e_3)\,x$ with $\varphi_0(e_3) \neq 0$ (indeed, we have $\varphi_0 \not\in D^\circ$ from the start of Step 5).
Hence $x$ is a (nonzero) scalar multiple of $e_3$, and we end up with $\varphi_0(x) \neq 0$, which contradicts the starting assumption that $\varphi_0(x)=0$.

We conclude that $x \not\in D+P$.

\vskip 2mm
\noindent \textbf{Step 7. The final contradiction.} \\
We deduce from the previous step that $D^\circ \cap P^\circ \not\subseteq x^\bot$, i.e.\ we can find
a linear form $f \in D^\circ \cap P^\circ$ such that $f(x) \neq 0$. As a consequence $S^t$ contains $f \otimes z$ for all $z \in P$.
The endomorphism $u$ (introduced in Step 5) induces an endomorphism of $\Vect(e_1,e_2,x)$ whose matrix in the basis $(e_1,e_2,x)$
is of the form
$$\begin{bmatrix}
0 & 0 & 0 \\
1 & 0 & ? \\
0 & 1 & 0
\end{bmatrix}$$
(we have used the observation that $\tr(u)=0$ to obtain that the lower-right entry equals zero).
By adding the previous tensors to $u$ and by considering the induced endomorphisms of $\Vect(e_1,e_2,x)$, we obtain representing matrices in the basis $(e_1,e_2,x)$ of the form
$$\begin{bmatrix}
0 & 0 & \alpha \\
1 & 0 & \beta \\
0 & 1 & 0
\end{bmatrix}$$
where the parameters $\alpha$ and $\beta$ can be chosen at will. All these matrices must 
be triangularizable over $\F$ because $S^t$ is weakly triangularizable.
It follows that their characteristic polynomials are all split over $\F$, i.e., the polynomial 
$t^3+\beta t+\alpha$ splits over $\F$ for all $(\alpha,\beta)\in \F^2$, in obvious conflict with  
Lemma \ref{lemma:polylemma} (even more simply, we can choose an irreducible monic polynomial $q(t)$ of degree $2$,
and simply consider the polynomial $(t+a)\,q(t)$ where $a:=\tr q(t)$).

This final contradiction yields $x \in P$, and by Step 3 we conclude that $y \in H'$. Hence we have proved that all the $S$-adapted vectors
belong to $G \cup H \cup H'$. This completes our proof.
\end{proof}

\subsection{Third Confinement Lemma}

We finish with the Third Confinement Lemma.

\begin{lemma}[Third Confinement Lemma]\label{lemma:confinement3}
Assume that $\F$ is perfect with characteristic $2$ but is not quadratically closed.
Let $V$ be a vector space with dimension $n \geq 4$,
and $S$ be a weakly triangularizable subspace of $\End(V)$.
Assume that in some basis of $V$, the matrix space that represents $S$ contains all the matrices of the form
$$\begin{bmatrix}
0 & 0 & 0 & [0]_{1 \times (n-3)} \\
a & \lambda & y & [0]_{1 \times (n-3)} \\
b & x & \lambda & [0]_{1 \times (n-3)} \\
c & b & a & [0]_{1 \times (n-3)} \\
[?]_{(n-4) \times 1} &  [?]_{(n-4) \times 1} &  [?]_{(n-4) \times 1} & [0]_{(n-4) \times (n-3)}
\end{bmatrix} \quad \text{with $(a,b,c,\lambda,x,y)\in \F^6$.}$$
Then either $S$ is a hurdle or there exist two linear hyperplanes $H$ and $H'$ of $V$
such that every vector of $V$ that is not $S$-adapted belongs to $H \cup H'$.
\end{lemma}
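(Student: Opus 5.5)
Let $(e_1,\dots,e_n)$ be the basis in the statement and $(e_1^\star,\dots,e_n^\star)$ its dual basis. The plan is to show that every non-$S$-adapted vector lies in $H:=\Ker e_1^\star$ or in one further hyperplane $H'$, which is obtained exactly as in the Second Confinement Lemma (or else $S$ is a hurdle). The given matrices map into $\Vect(e_2,\dots,e_n)$ and vanish on $\Vect(e_4,\dots,e_n)$. Restricting them to $\Vect(e_1,e_2,e_3,e_4)$ and reading rows $2,3,4$ against columns $1,2,3$, the parameters $(\lambda,x,y)$ give a copy of $\mathfrak{sl}_2(\F)$ acting on $\Vect(e_2,e_3)$. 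The parameters $a,b,c$ tie the first column to the last row through the "anti-diagonal" pattern. So I would first rewrite the hypothesis in tensor form: $S$ contains all trace-zero tensors $\varphi\otimes z$ with $\varphi\in\Vect(e_2^\star,e_3^\star)$ and $z\in\Vect(e_2,e_3)$, namely the $\mathfrak{sl}_2$ part. It also contains the tensors $e_1^\star\otimes w$ for suitable $w$, and the mixed operators $a(e_1^\star\otimes e_2+e_3^\star\otimes e_4)$ and $b(e_1^\star\otimes e_3+e_2^\star\otimes e_4)$, each modulo $e_1^\star\otimes\Vect(e_5,\dots,e_n)$.

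Next I take a vector $x_0\notin H$ that is not $S$-adapted, together with $\varphi\otimes x_0\in S$, $\varphi\neq 0$, $\varphi(x_0)=0$. The goal is to show that $x_0$ lies in a single fixed hyperplane $H'$. The main tool is Lemma \ref{lemma:3by3}, and the Special Erasure/Invariant Subspace lemmas in transposed form. Add $\varphi\otimes x_0$ to the $\mathfrak{sl}_2$ block and to the mixed operators, then look at the endomorphism induced on a suitable invariant $3$-dimensional subspace, such as $\Vect(e_2,e_3,x_0)$ or its analogue in $V^\star$ spanned by $e_2^\star,e_3^\star,\varphi$. Lemma \ref{lemma:3by3} then forces vanishing of certain coordinates. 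I expect this to give $\varphi\in\Vect(e_2^\star,e_3^\star)+\F e_1^\star$ and constraints on $e_2^\star(x_0),e_3^\star(x_0)$. As in Steps 6--7 of the Second Confinement Lemma, if some coordinate is nonzero, the mixed tensors let me realize a companion-type matrix $\begin{bmatrix}0&0&\alpha\\1&0&\beta\\0&1&0\end{bmatrix}$ with $\alpha,\beta$ free, contradicting Lemma \ref{lemma:polylemma}. Here the $a,b,c$ parameters supply the free entries.

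After these reductions, the surviving case should be that $\varphi$ lies in a fixed plane $P$ of $V^\star$ and $\varphi\otimes x_0\in S$. I would then distinguish two cases in the manner of Steps 1--2 of the Second Confinement Lemma. Either $S$ contains the full $2$-dimensional space $P\otimes z$ for some $z\notin H$, and then, combining it with the $\mathfrak{sl}_2$ part, $S$ contains every trace-zero tensor vanishing on ${}^\circ P$, so $S$ is a hurdle. Or $S\cap(P\otimes z)$ is at most a line $\F(\theta\otimes z)$, and then all the $x_0$ are forced into $H':=\Ker\theta$.

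I expect the main obstacle to be the middle step. The difficulty is choosing the right $3$-dimensional invariant subspace and the right added tensors so that Lemma \ref{lemma:3by3} or Lemma \ref{lemma:polylemma} applies, while controlling the unknown entries in rows $5,\dots,n$ and the unknown coordinates of $\varphi$. I would first try working in $S^t$, where the hypothesis becomes a clean $\mathfrak{sl}_2$ block on $\Vect(e_2^\star,e_3^\star)$ with $e_1^\star$ playing the role of $D$, mimicking Step 5 of the Second Confinement Lemma verbatim.
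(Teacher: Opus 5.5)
Your overall target matches the paper. $H=\Ker e_1^\star$ is indeed one of the two hyperplanes, and the intermediate conclusion you hope for, that the witness form $\varphi$ lies in $\Vect(e_1^\star,e_2^\star,e_3^\star)$, is exactly what the paper's first two steps establish. But you leave that decisive step as an expectation, and the tools you name would not deliver it as described.

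\textbf{The middle step.} The $3$-dimensional reductions you propose do not apply as stated. For instance, $\Vect(e_2,e_3,x_0)$ is not stable under the operator with parameter $a$. That operator sends $e_3$ to $e_4$, and $e_4\notin\Vect(e_2,e_3,x_0)$ since $e_1^\star(x_0)\neq 0$. So the $a,b,c$ entries cannot simply ``supply the free entries'' of a $3\times 3$ companion matrix.

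In the paper, the companion-matrix step forces $\varphi\in\Vect(e_1^\star,\dots,e_4^\star)$. Its free entries come from the $[?]$-blocks of columns $2$ and $3$. These blocks are \emph{arbitrary}, so $e_2^\star\otimes z$ and $e_3^\star\otimes z$ lie in $S$ for all $z\in\Vect(e_5,\dots,e_n)$. You treat them instead as unknown entries to be controlled.

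Removing the $e_4^\star$-component then requires, in the paper, a genuinely $4$-dimensional argument:
\begin{enumerate}[(i)]
\item subtract an element of $S$ built from the $a,b,c$ pattern;
\item apply Lemma \ref{lemma:3by3} to the resulting $3\times 3$ block, which gives $\alpha=0$ and $\beta=\eta^{-1}\delta$;
\item exhibit $4\times 4$ diagonal blocks of elements of $S+\F\id_V$ whose characteristic polynomials $t^4+c_2t^2+c_4$ can be made rootless.
\end{enumerate}
All of this rests on a normalization you also lack. Replacing $(e_2,e_3)$ by any basis of its span preserves the hypothesis, up to a scalar $\eta$ on the $a,b$ coupling, because alternating $2\times 2$ matrices form a line stable under congruence. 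After rescaling $x_0$ and $e_1$, this lets one assume $e_2^\star(x_0)=0$ and $e_1^\star(x_0)=e_3^\star(x_0)=1$.

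\textbf{The endgame.} The dichotomy you import from Lemma \ref{lemma:confinement2} is unsupported here. Its Step 1 (``$P\otimes z\subseteq S$ forces a hurdle'') relies on $S$ containing every trace-zero $\psi\otimes x$ with $\psi\in P$ and $x$ in the hyperplane. The present hypothesis gives nothing of the sort: for instance, $e_2^\star\otimes e_4$ appears only coupled with $e_1^\star\otimes e_3$ through the parameter $b$. So adding $P\otimes z$ to the $\mathfrak{sl}_2(\F)$ block does not produce all trace-zero tensors vanishing on ${}^\circ P$. Moreover, your expected constraint puts $\varphi$ in a $3$-dimensional space, so it is unclear which plane $P$ you mean.

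No such dichotomy is needed, and the paper never uses the hurdle alternative. Set $H_1:=\Vect(e_1,e_4,\dots,e_n)$. The paper shows that a non-adapted $x_0\notin H_1\cup\Ker e_1^\star$ cannot exist. Once $\varphi\in\Vect(e_1^\star,e_2^\star,e_3^\star)$, apply Lemma \ref{lemma:3by3}, after a permutation, to the top-left $3\times 3$ block of $\varphi\otimes x_0$. Perturb that block by $\mathfrak{sl}_2(\F)$ on $\Vect(e_2,e_3)$ and by arbitrary entries below the $(1,1)$ spot. The lemma forces the first row of the block, which is nonzero because $e_1^\star(x_0)\neq0$, to be concentrated at the $(1,1)$ entry. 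That entry equals the trace, which is $0$, a contradiction. Any hyperplane containing $H_1$ then serves as $H'$.
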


\begin{proof}
We will say that a basis $(e_1,\dots,e_n)$ of $V$ is \textbf{fine} for $S$ when there exists a scalar $\eta \in \F^\times$
such that the matrix space that represents $S$ in this basis contains all the matrices of the form
$$\begin{bmatrix}
0 & 0 & 0 & [0]_{1 \times (n-3)} \\
\eta a & \lambda & y & [0]_{1 \times (n-3)} \\
\eta b & x & \lambda & [0]_{1 \times (n-3)} \\
c & b & a & [0]_{1 \times (n-3)} \\
[?]_{(n-4) \times 1} &  [?]_{(n-4) \times 1} &  [?]_{(n-4) \times 1} & [0]_{(n-4) \times (n-3)}
\end{bmatrix} \quad \text{with $(a,b,c,\lambda,x,y) \in \F^6$.}$$
By assumption such a basis exists, and a key observation is that for any such basis $(e_1,\dots,e_n)$,
if we replace $e_1$ with $\alpha e_1$ for an arbitrary $\alpha \in \F^\times$, and replace $(e_2,e_3)$ with an arbitrary basis of
$\Vect(e_2,e_3)$, then the resulting basis remains fine for $S$.
This is obvious for the problem of modifying $e_1$, and for the one of modifying $(e_2,e_3)$ this is deduced from the following observation:
take $P \in \GL_2(\F)$, set $K:=\begin{bmatrix}
0 & 1 \\
1 & 0
\end{bmatrix}$ and note that $PKP^T=\eta' K$ for some $\eta' \in \F^\times$ because, $K$ being alternating, $PKP^T$
is also alternating.

Note that for all $C \in \F^2$ and all $\eta \in \F^\times$, conjugating the matrix
$\begin{bmatrix}
0 & [0]_{1 \times 2} & 0 \\
\eta KC & [0]_{2 \times 2} & [0]_{2 \times 1} \\
0 & C^T & 0
\end{bmatrix}$ with $I_1 \oplus P \oplus I_1$
yields
\begin{align*}
\begin{bmatrix}
0 & [0]_{1 \times 2} & 0 \\
\eta PKC & [0]_{2 \times 2} & [0]_{2 \times 1} \\
0 & C^TP^{-1} & 0
\end{bmatrix}
& =\begin{bmatrix}
0 & [0]_{1 \times 2} & 0 \\
\eta PKP^T (P^T)^{-1} C & [0]_{2 \times 2} & [0]_{2 \times 1} \\
0 & ((P^T)^{-1}C)^T & 0
\end{bmatrix} \\
& =
\begin{bmatrix}
0 & [0]_{1 \times 2} & 0 \\
\eta\eta' K ((P^T)^{-1} C) & [0]_{2 \times 2} & [0]_{2 \times 1} \\
0 & ((P^T)^{-1}C)^T & 0
\end{bmatrix}.
\end{align*}

Hence, by varying $C$ we obtain, after such a conjugation, all the matrices of the form
$\begin{bmatrix}
0 & [0]_{1 \times 2} & 0 \\
\eta\eta' K C' & [0]_{2 \times 2} & [0]_{2 \times 1} \\
0 & (C')^T & 0
\end{bmatrix}$ with $C' \in \F^2$.

We move on to the first part of the proof. For this part it is much more convenient to work with the transposed space $S^t$, which is weakly triangularizable:
we refer to the proof of the Second Confinement Lemma for the notation.

Now, we take a basis $(e_1,\dots,e_n)$ that is fine for $S$. We set
$$H_1:=\Vect(e_1,e_4,\dots,e_n) \quad \text{and} \quad H_2:=\Vect(e_2,e_3,\dots,e_n),$$
and we denote by $(e_1^\star,\dots,e_n^\star)$ the dual basis of $V^\star$.

Let $y \in V \setminus \{0\}$ be non-adapted to $S$.
We assume that $y \not\in H_1 \cup H_2$, and we shall prove that it leads to a contradiction.

We set $U:=V^\star$ and $\varphi:=\mathfrak{i}_V(y) \in U^\star$.
Then we have a nonzero vector $f \in U$ such that $\varphi \otimes f \in S^t$ and $\varphi(f)=0$.

The restriction of $\varphi$ to the subspace $\Vect(e_2^\star,e_3^\star)$ does not vanish, otherwise
$y \in H_1$. We can also perform a change of basis of the type we have explained earlier so as to reduce the situation to the one where
$\varphi(e_2^\star)=0$. We immediately derive that $\varphi(e_3^\star) \neq 0$.
By rescaling $\varphi$ we can actually assume that $\varphi(e_3^\star)=1$.
And finally we note that $\varphi(e_1^\star) \neq 0$ because $y \not\in \Vect(e_2,\dots,e_n)$, and by rescaling $e_1$ we can also assume that
$\varphi(e_1^\star)=1$. This does not change the spaces $H_1$ and $H_2$, and the resulting basis remains fine for $S$.
To sum up:
$$\varphi(e_1^\star)=\varphi(e_3^\star)=1 \quad \text{and} \quad \varphi(e_2^\star)=0.$$
There will be three main steps from where we are.
We will successively prove that $f \in \Vect(e_1^\star,e_2^\star,e_3^\star,e_4^\star)$,
that $f \in \Vect(e_1^\star,e_2^\star,e_3^\star)$, and in the last step we will show that the latter point leads to a contradiction.

\vskip 3mm
\noindent \textbf{Step 1: Proving that $f \in \Vect(e_1^\star,e_2^\star,e_3^\star,e_4^\star)$.} \\
We assume on the contrary that $f \not\in \Vect(e_1^\star,e_2^\star,e_3^\star,e_4^\star)$.

Note that $S$ contains $e_3^\star \otimes e_2$, and hence $\varphi_0 :=\mathfrak{i}_V(e_2)$
is such that $\varphi_0 \otimes e_3^\star \in S^t$ and $\varphi_0(e_2^\star)=1$.
Then we consider the sum  $\varphi_0 \otimes e^\star_3+\varphi \otimes f$ and we note that it leaves $\Vect(e_2^\star,e_3^\star,f)$ invariant and that its matrix in the basis
$(e_2^\star,e_3^\star,f)$ is of the form $\begin{bmatrix}
0 & 0 & 0 \\
1 & 0 & ? \\
0 & 1 & 0
\end{bmatrix}$. Next, since $f \not\in \Vect(e_1^\star,e_2^\star,e_3^\star,e_4^\star)$ we find some
$\theta \in \{e_1^\star,e_2^\star,e_3^\star,e_4^\star\}^\circ$ such that $\theta(f)=1$.
Then $\theta=\mathfrak{i}_V(z)$ for some $z \in \Vect(e_5,\dots,e_n)$, and because $(e_1,\dots,e_n)$ is fine for $S$
we see that $e_2^\star \otimes z$ and $e_3^\star \otimes z$ belong to $S$, to the effect that
$\theta \otimes e_2^\star$ and $\theta \otimes e_3^\star$ belong to $S^t$.
The endomorphisms $\theta \otimes e_2^\star$ and $\theta \otimes e_3^\star$
leave $\Vect(e_2^\star,e_3^\star,f)$ invariant, and the corresponding resulting matrices in the basis $(e_2^\star,e_3^\star,f)$
are the unit matrices $E_{1,3}$ and $E_{2,3}$. Since $S^t$ is weakly triangularizable, by linearly combining the previous matrices we end up with
the fact that all the monic polynomials of $\F[t]$ with degree $3$ and trace zero split over $\F$, contradicting Lemma \ref{lemma:polylemma}.

We conclude that $f \in \Vect(e_1^\star,e_2^\star,e_3^\star,e_4^\star)$.

\vskip 3mm
\noindent \textbf{Step 2: Proving that $f \in \Vect(e_1^\star,e_2^\star,e_3^\star)$.} \\
This is the most difficult part of the proof. Once more, we assume that $f \not\in \Vect(e_1^\star,e_2^\star,e_3^\star)$.
By scaling $f$ we can, and we will, assume that the coefficient of $f$ on $e_4^\star$ equals $1$ in the basis $(e_1^\star,e_2^\star,e_3^\star,e_4^\star)$.
Now, we come back to matrices for greater convenience.
We denote by $\calM$ the matrix space that represents $S$ in the basis $(e_1,\dots,e_n)$ (which is fine for $S$).
We denote by $M$ the matrix that represents $f\otimes y$ in that basis. Note that $M$ has rank $1$ and trace $0$.
It follows from the previous assumptions and facts that
$$M=\begin{bmatrix}
A_0 & [0]_{4 \times (n-4)} \\
C_1 & [0]_{(n-4) \times (n-4)}
\end{bmatrix}$$
where
$A_0=\begin{bmatrix}
\delta & ? & ? & 1 \\
0 & 0 & 0 & 0 \\
\delta & ? & ? & 1 \\
\gamma & \beta & \alpha & ?
\end{bmatrix}$ for some $\delta,\beta,\alpha,\gamma$ in $\F$, and $C_1 \in \Mat_{n-4,4}(\F)$:
in $A_0$ the vanishing of the second row comes from $\varphi(e_2^\star)=0$, the values of the first and third coefficient in the fourth column come from
$\varphi(e_1^\star)=1=\varphi(e_3^\star)$ and $f \in e_4^\star+\Vect(e_1^\star,e_2^\star,e_3^\star)$, and finally the first column is computed by using
$\rk(M) \leq 1$.
Note also that $\tr(A_0)=0$ and $\rk(A_0) \leq 1$.

Now, thanks to our loosened assumption on $\calM$ we know that it contains a matrix of the form
$$M_0=\begin{bmatrix}
A_1 & [0]_{4 \times (n-4)} \\
C_1 & [0]_{(n-4) \times (n-4)}
\end{bmatrix}$$
where $A_1=\begin{bmatrix}
0 & 0 & 0 & 0 \\
0 & 0 & 0 & 0 \\
\delta & 0 & 0 & 0 \\
\gamma & \eta^{-1} \delta & 0 & 0
\end{bmatrix}$
and $\eta \in \F^\times$ is associated with the fine basis $(e_1,\dots,e_n)$.
Then $M':=M-M_0$ belongs to $\calM$ and takes the form
$$M'=\begin{bmatrix}
\delta & [?]_{1 \times 3} & [0]_{1 \times (n-4)} \\
[0]_{3 \times 1} & B_0 & [0]_{3 \times (n-4)} \\
[0]_{(n-4) \times 1} & [0]_{(n-4) \times 3} & [0]_{(n-4) \times (n-4)}
\end{bmatrix} \quad \text{with} \quad
B_0=\begin{bmatrix}
0 & 0 & 0 \\
? & ? & 1 \\
\beta-\eta^{-1}\delta & \alpha & ?
\end{bmatrix}.$$

For all $N \in \mathfrak{sl}_2(\F)$ we add the matrix $0_1 \oplus N \oplus 0_{n-3}$, we deduce that $\calM$ contains a matrix of the form
$$\begin{bmatrix}
\delta & [?]_{1 \times 3} & [0]_{1 \times (n-4)} \\
[0]_{3 \times 1} & B_0+(N \oplus 0_1) & [0]_{3 \times (n-4)} \\
[0]_{(n-4) \times 1} & [0]_{(n-4) \times 3} & [0]_{(n-4) \times (n-4)}
\end{bmatrix},$$
and we infer by extracting diagonal blocks that $B_0+(N \oplus 0_1)$ is triangularizable.
Then Lemma \ref{lemma:3by3} can be applied, and we deduce that
$\beta=\eta^{-1}\delta$ and $\alpha=0$.
We will now discuss whether the third column of $A_0$ is zero or not, and will try to find a contradiction in any case.

\vskip 3mm
\noindent
\textbf{Case 1. The third column of $A_0$ is nonzero.} \\
Recall that $\alpha=0$. Since $A_0$ has rank $1$ all its columns are scalar multiples of the third one and 
we deduce that the last row of $A_0$ is zero.

In particular $\beta=0$ and hence $\delta=\eta \beta=0$. Hence the first column of $A_0$ is zero.
Then $\tr (A_0)$ is the coefficient of $A_0$ at the $(3,3)$-spot, and hence this coefficient is zero.
Once more, since $\rk(A_0) \leq 1$ the third column must be a scalar multiple of the fourth one, and hence it is zero,
a contradiction.

\vskip 3mm
\noindent
\textbf{Case 2. The third column of $A_0$ is zero.} \\
Hence, for some $\varepsilon \in \F$,
$$A_0=\begin{bmatrix}
\delta & \varepsilon & 0 & 1 \\
0 & 0 & 0 & 0 \\
\delta & \varepsilon & 0 & 1 \\
? & \eta^{-1}\delta & 0 & \delta
\end{bmatrix},$$
where we have used $\tr(A_0)=0$ to see that the coefficient at the $(4,4)$-spot is $\delta$
(and also the fact that $\rk(A_0) \leq 1$ to compute the second column).

\vskip 2mm
\noindent
\textbf{Subcase 2.1. The case where $\varepsilon \neq 0$.} \\
Fix $(\lambda,x,x')\in \F^3$.
Then by using once more the assumption on the fine basis $(e_1,\dots,e_n)$, and by adding $\delta I_n$ and an appropriate matrix of $\calM$, we deduce that
$$\begin{bmatrix}
0 & \varepsilon & 0 & 1 \\
0 & 0 & x' & 0 \\
0 & x & 0 & 1 \\
\lambda & 0 & 0 & 0
\end{bmatrix}$$
is triangularizable over $\F$.
Yet we easily compute that its characteristic polynomial equals
$$t^2 (t^2+xx')+\lambda  ((x+\varepsilon)x'+t^2).$$
Hence this polynomial equals $t^4+c_2 t^2+c_4$ where
$$c_2:=xx'+\lambda \quad \text{and} \quad c_4=\lambda  xx'+\varepsilon \lambda x'.$$
This polynomial should then split over $\F$. We will now prove that the parameters
$\lambda,x,x'$ can be adjusted so that it actually has no root in $\F$.

Let us take an arbitrary pair $(p,q) \in \F^2$ such that $t^2+pt+q$ has no root in $\F$.
Let us choose $z \in \F \setminus \{p\}$. Then, we set $\lambda:=p+z$, which is nonzero. Since $\lambda \varepsilon \neq 0$ we can take the unique $x' \in \F$ such that
$\lambda z+\varepsilon \lambda x'=q$. If $x'=0$ then $z$ would be a root of $t^2+pt+q$, which is forbidden.
Then $x' \neq 0$, and finally we can choose $x \in \F$ such that $xx'=z$, so that $c_2=p$ and $c_4=q$.
Then $t^2+c_2t+c_4$ has no root in $\F$, so neither does $t^4+c_2 t^2+c_4$ and we get the desired contradiction.

\vskip 2mm
\noindent
\textbf{Subcase 2.2. The case where $\varepsilon = 0$.} \\
Then, since $A_0$ has rank at most $1$ its second column is zero. In particular $\delta=0$.
This time around, we use the same technique as in the above to find that for all $a$ and $x$ in $\F$ the matrix
$$\begin{bmatrix}
0 & 0 & 0 & 1 \\
\eta a & 0 & 0 & 0 \\
0 & x & 0 & 1 \\
0 & 0 & a & 0
\end{bmatrix}$$
is triangularizable over $\F$.
One computes that its characteristic polynomial equals $t^4+a t^2+\eta a^2 x$.
As before, we choose $(p,q)\in \F^2$ such that $t^2+pt+q$ has no root in $\F$, and we note that $p \neq 0$ because $\F$ is perfect.
Then we take $a:=p$ and then $x:=\eta^{-1}a^{-2} q$ to obtain a contradiction.

\vskip 3mm
\noindent \textbf{Step 3: The final contradiction.} \\
We conclude from the previous step that $f \in \Vect(e_1^\star,e_2^\star,e_3^\star)$. This shows that
the matrix in $\calM$ that represents $f \otimes y$ in the basis $(e_1,\dots,e_n)$ takes the form
$$\begin{bmatrix}
R_0 & [0]_{3 \times (n-3)} \\
[?]_{(n-3) \times 3} & [0]_{(n-3) \times (n-3)}
\end{bmatrix}$$
where $R_0 \in \Mat_3(\F)$ has its first row nonzero (because $y \not\in H_2=\Vect(e_2,\dots,e_n)$).
Note also that $\rk R_0 \leq 1$ and $\tr(R_0)=0$.

Once more, we use the assumption that the basis if fine for $S$ to obtain that
we find a triangularizable matrix whenever we add $R_0$ to a matrix of the form
$$\begin{bmatrix}
0 & [0]_{1 \times 2} \\
[?]_{2 \times 1} & N
\end{bmatrix} \quad \text{with $N \in \mathfrak{sl}_2(\F)$.}$$
By applying Lemma \ref{lemma:3by3} (after conjugating with a suitable permutation matrix) we deduce
that in the first row of $R_0$ the last two entries are zero. Since the first row of $R_0$ is nonzero and $\rk(R_0) \leq 1$ this shows that the
second and third columns are zero; the coefficient of $R_0$ at the $(1,1)$-spot is then $\tr(R_0)$.
Yet $\tr (R_0)=0$, which yields a final contradiction since the first row of $R_0$ is nonzero.

We conclude that, as claimed, all the vectors that are not $S$-adapted belong to the union $H_1 \cup H_2$.
The Third Confinement Lemma is thus proved.
\end{proof}

Hence, the proof of Theorem \ref{theo:adaptedvectorsrefined} is finally complete. 
Theorem \ref{theo:adaptedvectors} then readily follows from it. In the sequel to this article, we will 
show how the latter leads to the full classification of the optimal weakly triangularizable subspaces of $\Mat_n(\F)$
when $\F$ is finite with characteristic $2$ and more than $2$ elements.


\begin{thebibliography}{1}
\bibitem{AtkinsonLloydPrim}
M.D. Atkinson, S. Lloyd,
{Primitive spaces of matrices of bounded rank.}
J. Austral. Math. Soc. (Ser. A)
{\bf 30} (1980) 473--482.

\bibitem{AtkinsonPrim}
M.D. Atkinson,
{Primitive spaces of matrices of bounded rank II.}
J. Austral. Math. Soc. (Ser. A)
{\bf 34} (1983) 306--315.

\bibitem{Gerstenhaber}
M. Gerstenhaber,
{On nilalgebras and linear varieties of nilpotent matrices (I).}
{Amer. J. Math.}
{\bf 80} (1958) 614--622.

\bibitem{dSPinvitquad}
C. de Seguins Pazzis,
{Invitation aux formes quadratiques.}
Calvage \& Mounet,
Paris (2011).

\bibitem{dSPfeweigenvalues}
C. de Seguins Pazzis,
{Spaces of matrices with few eigenvalues.}
Linear Algebra Appl.
{\bf 449} (2014) 210--311.

\bibitem{dSPtriangularizable}
C. de Seguins Pazzis,
{Spaces of triangularizable matrices.}
Acta Sci. Math. (Szeged)
{\bf 91} (2025) 369--399.

\bibitem{dSPsemilin}
C. de Seguins Pazzis,
{Affine subspaces of units in simple algebras.}
2025, preprint, https://arxiv.org/abs/2508.06934

\bibitem{dSPtriangularizable2}
C. de Seguins Pazzis,
{Spaces of triangularizable matrices II: Finite fields with odd characteristic.}
Linear Algebra Appl.
{\bf 736} (2026) 266-283.


\bibitem{dSPfeweigenvalues2}
C. de Seguins Pazzis,
{Spaces of matrices with few eigenvalues II.}
2026, preprint, https://arxiv.org/abs/2605.05849

\end{thebibliography}
\end{document}